\documentclass{amsart}
\usepackage{graphicx}
\usepackage{amsmath, amssymb, amsthm}
\usepackage{tikz}
\usepackage{esint}
\usepackage{hyperref}
\usepackage{stmaryrd}
\usepackage{dirtytalk}
\usepackage{pgfplots}
\usepackage{float}
\usetikzlibrary{3d, perspective}
\pgfplotsset{compat=1.18} 
\allowdisplaybreaks
\hypersetup{
    colorlinks,
    citecolor=black,
    filecolor=black,
    linkcolor=black,
    urlcolor=black
}
\usetikzlibrary{arrows}
\newtheorem{theorem}{Theorem}

\newtheorem{lemma}{Lemma}

\newtheorem{corollary}{Corollary}

\newtheorem{example}{Example}
\DeclareMathOperator{\sgn}{sgn}

\newcommand{\NN}{\mathbb{N}}

\newcommand{\PP}{\mathbb{P}}

\newcommand{\RR}{\mathbb{R}}

\newcommand{\TT}{\mathbb{T}}

\newcommand{\ZZ}{\mathbb{Z}}
\newcommand{\A}{\mathcal{A}}

\newcommand{\C}{\mathcal{C}}
\newcommand{\D}{\mathcal{D}}

\newcommand{\F}{\mathcal{F}}

\newcommand{\I}{\mathcal{I}}

\renewcommand{\O}{\mathcal{O}}
\renewcommand{\P}{\mathcal{P}}

\newcommand{\Z}{\mathcal{Z}}

\title[PLE anti Hölder]{Uniform Positivity of Lyapunov Exponents for anti H\"older potentials}
\author[N. Chiem]{Nicholas Chiem}
\address{Department of Mathematics, University of California, Riverside, CA-92521, USA}
\email{nchie005@ucr.edu}

\begin{document}

\begin{abstract}
    We consider Schr\"odinger operators with dynamically defined potentials that are anti $\alpha$-H\"older and generated by hyperbolic dynamics. We prove an anti Lipschitz estimate on functions related to the orbit of $\vec{e}_1$ under an associated projectivized cocycle. After, we apply the estimate on subshifts of finite type and maps with one expanding direction on $\TT^d$ where $d\geq 1$. In particular, the application leads to uniformly positive Lyapunov exponents with sufficiently large coupling constants. 
\end{abstract}

\maketitle

\section{Introduction}

We are interested in studying the family of discrete Schr\"odinger operators with potentials generated by a compact metric space $(\Omega, T, \mu)$, where $T:\Omega\rightarrow \Omega$ is continuous and invertible and $\mu$ is a $T$-invariant measure. The one-dimensional Schr\"odinger operator is defined as $H_{\lambda,v,\omega}:\ell^2(\ZZ) \rightarrow \ell^2(\ZZ)$ where
\begin{equation}\label{d.sop}
    [H_{\lambda, v, \omega}\psi]_n = \psi_{n-1} + \psi_{n+1} + \lambda v(T^n\omega)\psi_n.
\end{equation}
In (\ref{d.sop}), $\omega\in \Omega$ is called the phase, $v: \Omega \rightarrow \RR$ is a bounded measurable function called the potential, and $\lambda\in \RR$ is called the coupling constant. If $T$ is not invertible we replace $\ZZ$ with $\NN$ and impose Dirichlet boundary condition: $\psi_{-1} = 0$.

Our interest in studying Schr\"odinger operators stems from a physical phenomenon discovered in 1958 by Phillip W. Anderson, Anderson localization. Localization takes on two forms: spectral and dynamical. We are particularly interested in spectral localization, which is when the spectrum of the operator is pure point with exponentially decaying eigenfunctions. One of the most well known cases is the Anderson model, which is when the potential is sampled by independent identically distributed (iid) random variables, each with a common distribution with support containing at least two distinct points. A special case is when the distribution has exactly two points: the associated operator family is called the Anderson-Bernoulli model. The latter setting is considered the most difficult and was initially proven by Carmona-Klein-Martinelli \cite{ckm} using multi-scale analysis. Later, it was also shown using one-dimensional tools by \cite{AvDaZh2, bdf+, gk, gz, jz, svw}. For more information and results on localization, see \cite{ak, DJ1, ds, gb, gek1, gek2, gek3, JK1, JK2, jz}.

To study localization, one explores the behavior of solutions from the eigenvalue equation $H_{\lambda, v, \omega} \psi = E\psi$ for energies $E\in \RR$. One direction to study the solutions is through Schr\"odinger cocycles $(T, A^{(E-\lambda v)})$, especially through the Lyapunov exponent. The Lyapunov exponent plays a central role in understanding localization; Kotani theory is a bridge between the two, as it relates the absolutely continuous spectrum to the essential closure of the set of energies where the Lyapunov exponent is zero. It has been observed that when paired with a large deviation estimate, one has a strong indication of localization, see \cite{jz, bg, BouSch, bdf+}.

We wish to point out that the method in the preceding paragraph studies the spectrum through a dynamical lens rather than a spectral lens. The latter has been explored by Damanik-Fillman \cite{DJ3, DJ4}. They show that for the doubling map and hyperbolic toral automorphisms with continuous sampling function, one has a connected essential spectrum. By using Kotani theory, one can return back to Lyapunov exponents and conclude positivity almost everywhere!

For our paper, we are interested in removing the entirety of the absolutely continuous spectrum by proving that the Lyapunov exponent is uniformly positive for all energies. The investigation of positive Lyapunov exponents has a rich history; the most famous settings established were the iid setting by Furstenberg \cite{fursten} and the quasi-periodic setting by Sorets-Spencer \cite{es} by extending methods from Herman \cite{herman}, and Bourgain \cite{Bou} for $d$-dimensional tori.  

Unfortunately, in alternative settings such as hyperbolic dynamics, the results are less gratifying. Independence plays a central role, thus by reducing to a strongly mixing system, one faces many complications. Our paper will focus on the hyperbolic setting, which contains two regimes: small and large coupling constants. For small coupling constants, Figotin-Pastur \cite{FigPas} developed methods that provided a necessary tool for Chulaevsky-Spencer \cite{ChuSpe} to show positivity away from the center and edges of the spectrum for deterministic potentials with small coupling constants. Building upon this, Bourgain-Schlag \cite{BouSch} established a large deviation theorem to achieve localization for almost every phase in the same spectral regime as Chulaevsky-Spencer \cite{ChuSpe}. For large couplings, Bjerklöv \cite{Bje} established positivity for circle endomorphisms and Zhang-Li \cite{ZhaLi} extends the result to expanding toral automorphisms. Moreover, J. Bourgain and E. Bourgain \cite{Bou2} established positivity for any non-negative coupling, as long as one has sufficient hyperbolicity and $C^1$ non-constant potentials. Results in this regime have also been shown by Sadel-Baldes \cite{SS},  Damanik-Killip \cite{DK}, and Krüger \cite{K}. Recently, a more complete answer for hyperbolic base dynamics was given from Avila-Damanik-Zhang \cite{AvDaZh}. They were able to show that for $\alpha$-H\"older continuous functions, the set $\Z = \{E\in \RR: L(E) = 0\}$ is discrete and there exists a constant $\lambda_0(v)$ so that $\Z$ is finite for all $0 < \lambda < \lambda_0$. Additionally, there is an open dense set $\O^\alpha\subset C^\alpha(\TT^d,\RR)$ such that one has uniformly positive Lyapunov exponents for all potentials in $\O^\alpha$. The technique used by Avila-Damanik-Zhang is the invariance principle, which has also been used to study positivity and continuity of the Lyapunov exponent, see \cite{BonVia,vy}. For more general information on results of positive Lyapunov exponents we refer to \cite{WanZha}. 

For our paper, a method of interest was developed by Young \cite{young}. Young studied cocycles of the form 
\begin{equation}\label{d.ycoc}
    A_\epsilon(\cdot) = \begin{pmatrix}
    \lambda & 0\\
    0 & \lambda^{-1}
    \end{pmatrix}R_{\phi_\epsilon(\cdot)}
\end{equation}
where $R$ is a rotation matrix that depends on a function $\phi_\epsilon: \RR/\ZZ \rightarrow \RR/2\pi\ZZ$. Young observed that positivity could be concluded so long as $\phi_\epsilon$ satisfies three conditions: compactly supported, monotone on the support, and derivative is positive and bounded away from zero. Damanik \cite[Problem 6]{Dam} observed that it would be possible to show positive Lyapunov exponents for Schr\"odinger cocycles with the method Young used. At the time, it was unclear how to apply it directly since the method required the cocycle to be of the form (\ref{d.ycoc}). Zhang showed that Schr\"odinger cocycles with sufficiently large coupling constants could be studied by defining a new cocycle which has a similar form of (\ref{d.ycoc}). The difference from (\ref{d.ycoc}) to Zhang's setting is that the matrix entries $\lambda$ are no longer constants, but functions. However, this can be dangerous as the functions may cancel the expanding effects of the base dynamics. One of Zhang's key observations was the function attached to the rotation matrix only needed two of the properties from Young's list: monotonicity and derivative uniformly bounded away from zero. This led to Zhang's result of positivity for potentials generated by the doubling map \cite{zhenghe2}.

Zhang's method for proving positivity followed a five step process, which is summarized below.
\begin{enumerate}
    \item[(i)] polar decomposition,
    \item[(ii)] control the derivatives of inductively defined functions $\theta_n = \phi_n + \theta \circ T^n$ from below,
    \item[(iii)] count discontinuities and points that achieve a $\frac{\pi}{2}+\pi\ZZ$ rotation,
    \item[(iv)] control the measure of pre-images of $\delta$-balls around achieved $\frac{\pi}{2}+ \pi\ZZ$ points,
    \item[(v)] conclude positivity.
\end{enumerate}
Briefly, step (i) has the purpose to move the Schr\"odinger cocycle to the form on (\ref{d.ycoc}). For step (ii), the functions $\phi_n$ and $\theta \circ T^n$ tracks the angle of the orbit of $\vec{e}_1$ after the action of the hyperbolic matrix and rotation, respectively. For concrete definitions of $\phi_n$ and $\theta\circ T^n$ we refer to section 2. Next, one counts the $\frac{\pi}{2}+\pi \ZZ$ rotations, as this is where the expanding effects of the hyperbolic matrix can be canceled out. In (iv), one ensures that even though $\frac{\pi}{2}+\pi\ZZ$ points arise, the amount of points close by is minimal to ensure expansion. Finally, positivity can be concluded using an argument similar to Young's method.

This method was proven to be flexible, as the method was used to show positivity for any hyperbolic $\mathrm{SL}(2,\RR)$ matrix acting on $\TT^2$; the adjustment from the $\TT$ to $\TT^2$ was to replace the uniform lower bound of the derivative with a uniform lower bound on the directional derivative in the unstable direction. 

Our paper further develops this method by providing a checklist of conditions to attain a suitable counterpart to (ii). We briefly move away from our original setup to prove a more general result which may be of independent interest. Instead, we let $\Omega$ be an ordered set with a self map $T:\Omega \rightarrow \Omega$. There are two key ingredients: monotone potential and $T$ satisfying a hyperbolic injective condition. The first is clear, but we wish to emphasize that the potential does not need to be differentiable anywhere. The latter condition  is as follows: there exists a sequence of partitions $\{\P_n\}$ such that for every $n>m$, then $\P_n$ is a refinement of $\P_m$. This means for every $S\in \P_n$, there exists an $S'\in \P_m$ such that $S\subset S'$. Moreover, for each $n\in \NN$, $T^n|_S$ injects into $\Omega$ for all $S\in \P_n$. When this condition is satisfied we say the map $T$ satisfies the \textit{hyperbolic injective condition}. We are now ready to state our key lemma.

\begin{lemma}\label{l.key}
    Let $\Omega$ be a space with order. Let $T:\Omega \rightarrow \Omega$ satisfy the hyperbolic injective condition. If $v: \Omega \rightarrow \RR$ is monotone and bounded, then there exists $\lambda_0 = \lambda_0(v)>0$ such that $\theta_n$ is monotone on every $S\in \P_n$ for all $n\in \NN$. Moreover, for any compact set $K\subset \RR$ there exists a constant $\eta = \eta(v,K) > 0$ such that for all $S\in \P_n$ and   $\omega,\omega'\in S$, one has the inequality
    \begin{equation}\label{e.l.monotone}
        |\theta_n(\omega',t) - \theta_n(\omega,t)|\geq \eta|v(T^n\omega') - v(T^n\omega)|,
    \end{equation}
    for all $t\in K$.
\end{lemma}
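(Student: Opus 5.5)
The plan is an induction on $n$, using the recursion $\theta_n=\phi_n+\theta\circ T^n$ from Section 2 together with the monotone structure that the hyperbolic injective condition gives on each $S\in\P_n$. The quantitative inequality (\ref{e.l.monotone}) should come from the single term $\theta\circ T^n$. The correction $\phi_n$ only has to be shown not to work against it. I assume (to be checked against the precise setup in Section 2) that on each $S\in\P_n$ the maps $T^k$, $0\le k\le n$, preserve the order, or uniformly reverse it. Then for $\omega<\omega'$ in $S$ the differences $v(T^k\omega')-v(T^k\omega)$ all have one common sign for $k\le n$, because $v$ is monotone. This is the role of the refinement property: $S\in\P_n$ lies in some $S'\in\P_k$, so the statement at level $k$ is available on $S$.

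\textbf{Base ingredient.} With $t=E/\lambda$, the polar decomposition of the rescaled cocycle gives a rotation angle $\theta(\omega,t)$ of the form $\theta=\mathrm{arccot}(t-v(\omega))$, up to $O(\lambda^{-2})$ corrections that are monotone in $v$. Since $v$ is bounded and $t\in K$ with $K$ compact, the quantity $t-v$ stays in a compact interval. There
\[
\frac{d}{dv}\,\mathrm{arccot}(t-v)=\frac{1}{1+(t-v)^2}\ge 2\eta>0 ,
\]
so the mean value theorem in the variable $v$ (no differentiability in $\omega$ is needed) yields $|\theta(T^n\omega',t)-\theta(T^n\omega,t)|\ge 2\eta\,|v(T^n\omega')-v(T^n\omega)|$. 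Moreover the sign of $\theta(T^n\omega',t)-\theta(T^n\omega,t)$ matches the sign of $v(T^n\omega')-v(T^n\omega)$.

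\textbf{Inductive step.} The angle $\phi_n$ is the direction of $\vec e_1$ after the rotation by $\theta_{n-1}$ followed by the diagonal hyperbolic part. Concretely, I expect
\[
\phi_n=\arctan\!\bigl(c(T^{n-1}\omega,t)^{-2}\tan\theta_{n-1}\bigr)\pmod{\pi},
\]
where $c\sim\lambda\|\cdot\|$ is the expansion factor. The map $x\mapsto\arctan(a\tan x)$ is increasing in $x$ for $a>0$. So by the induction hypothesis, monotonicity of $\theta_{n-1}$ on the larger cell containing $S$ passes to monotonicity of $\phi_n$ in the same direction, provided we track the branch across $\frac{\pi}{2}+\pi\ZZ$ continuously. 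We then have
\[
\theta_n(\omega')-\theta_n(\omega)=\bigl[\phi_n(\omega')-\phi_n(\omega)\bigr]+\bigl[\theta(T^n\omega')-\theta(T^n\omega)\bigr],
\]
and if both brackets have the same sign then $|\theta_n(\omega')-\theta_n(\omega)|\ge 2\eta\,|v(T^n\omega')-v(T^n\omega)|$. This gives both monotonicity of $\theta_n$ on $S$ and (\ref{e.l.monotone}).

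\textbf{Main obstacle.} The factor $c$ itself depends on $v(T^{n-1}\omega)$, and its variation could push $\phi_n$ in the opposite direction. I would handle this by splitting $\phi_n(\omega')-\phi_n(\omega)$ into two parts. The first is the part from varying $\theta_{n-1}$ with $c$ frozen; it has the good sign and can be discarded. The second is the part from varying $c$ with $\theta_{n-1}$ frozen. For the second part I would show that it is bounded by $C\lambda^{-2}|v(T^{n-1}\omega')-v(T^{n-1}\omega)|$, since $\partial_c\arctan(c^{-2}\tan x)$ is $O(\lambda^{-2})$ uniformly on the relevant range. It is dominated by the good contribution $2\eta|v(T^{n-1}\omega')-v(T^{n-1}\omega)|$ already contained in $\theta_{n-1}$, as long as that contribution survives the contraction by $c^{-2}$. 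Making this comparison rigorous near the points where $\tan\theta_{n-1}$ blows up is the delicate part. If this direct comparison fails, I would instead absorb the bad term by reparametrizing: combine $c$ with the $\theta$-part of the polar decomposition, so that all $v(T^{n-1}\cdot)$-dependence enters one monotone function. Either way, choosing $\lambda_0(v)$ large, and then halving $2\eta$ to $\eta$, closes the induction.
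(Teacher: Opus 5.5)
There is a genuine gap, and it sits exactly in the region you defer as delicate.

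First, the stretch factor in the recursion is evaluated at $T^n\omega$, not $T^{n-1}\omega$: $\phi_n=\cot^{-1}\bigl(\lambda^2 g(T^n\omega,t)\cot\theta_{n-1}\bigr)$ with $g=1+(t-v)^2$. So the wrong-way term moves with $v(T^n\omega')-v(T^n\omega)$, the same increment that $\theta\circ T^n$ controls. Comparing it with the contribution of $\theta_{n-1}$ is therefore the wrong comparison. Away from $\frac{\pi}{2}+\pi\ZZ$, $\phi_n$ contracts that contribution by a factor of about $\lambda^{-2}$, so it does not survive.

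Second, your claimed uniform $O(\lambda^{-2})$ bound is false. Set $F(\theta,v)=\cot^{-1}(\lambda^2 g\cot\theta)+\cot^{-1}(t-v)$, $s=t-v$ and $X=\lambda^2 g\cot\theta$. Then
\[
\partial_v F=\frac{1}{1+s^2}\Bigl(1+\frac{2sX}{1+X^2}\Bigr).
\]
When $|X|\approx1$, i.e.\ $\theta_{n-1}$ lies within $O(\lambda^{-2})$ of $\frac{\pi}{2}+\pi\ZZ$, the $g$-variation cancels up to a fraction $|s|$ of the good term, independently of $\lambda$. If $|t-v(T^n\omega)|>1$, which is possible in the generality of the lemma, it wins outright. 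No choice of $\lambda_0$ makes it negligible there. Your fallback of merging all $v$-dependence into one function fails for the same reason: $F$ is not monotone in $v$ once $|t-v|>1$.

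What is missing is the paper's case analysis, for $\omega<\omega'$ in a cell.
If $\theta_{n-1}(\omega)$ and $\theta_{n-1}(\omega')$ lie in different $\pi$-intervals, the branch structure already orders $\phi_n$. Otherwise the paper assumes the wrong order $\phi_n(\omega)>\phi_n(\omega')$, fixes the sign of $\cot\theta_{n-1}$, and splits at $\lambda^2 g(T^n\omega')|\cot\theta_{n-1}(\omega')|=\lambda$.

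\emph{Contracting side.} The inequality $\cot^{-1}x-\cot^{-1}y<\frac1x-\frac1y$ bounds the wrong-way drop of $\phi_n$ by $\frac{C}{\lambda}\bigl(v(T^n\omega')-v(T^n\omega)\bigr)$. This is $O(\lambda^{-1})$, not $O(\lambda^{-2})$, and it is absorbed by the $2\eta$ coming from $\theta\circ T^n$ once $\lambda\ge\lambda_0$.

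\emph{Expanding side.} Here the piece you propose to discard, $\lambda^2 g\,\bigl(\theta_{n-1}(\omega')-\theta_{n-1}(\omega)\bigr)$, is the only thing that can exclude the wrong order. That requires $\theta_{n-1}(\omega')-\theta_{n-1}(\omega)\gtrsim\lambda^{-1}\bigl(v(T^n\omega')-v(T^n\omega)\bigr)$, and your plan provides nothing of the sort.

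Be aware that the inductive inequality only gives a bound in terms of $v(T^{n-1}\cdot)$. The paper's subcase (b) applies it with the iterate shifted by one ($v(T^{n+1}\cdot)$ in place of $v(T^n\cdot)$ in its indexing). Some extra comparison between the increments of $v\circ T^{n-1}$ and $v\circ T^n$ on cells therefore appears genuinely necessary. For instance, take the doubling map with $v\equiv0$ near $\omega,\omega'$, $v(T\omega)<v(T\omega')$ close to $\sup v=1$, and $t=\frac{1}{2\lambda^2}$. Then the ratio $\bigl(\theta_1(\omega')-\theta_1(\omega)\bigr)/\bigl(v(T\omega')-v(T\omega)\bigr)$ tends to $0$ as $\lambda\to\infty$ and $v(T\omega)\to1$.
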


The proof of Lemma \ref{l.key} requires us to engage in precise estimates on the inductively defined functions. The overarching idea to show monotonicity comes from the geometry of the graph of $\theta_n$, particularly the interaction between the sum of $\phi_n$ and $\theta\circ T^n$. Geometrically, for each $S\in \P_n$, there is small subset $U\subset S$ where $\phi_n$ is expanding and monotone. On $S\setminus U$, $\phi_n$ is a contraction and is not necessarily monotone. On the other hand, $\theta\circ T^n|_S$ is monotone increasing on all of $S$. Thus, $\theta_n|_U$ is dominated by $\phi_n$ and, due to high expansion, controls the monotonicity. On $S\setminus U$, the high contraction of $\phi_n$ implies that $\phi_n$ contributes little to the sum, leading $\theta\circ T^n$ to dominate and monotonicity of $\theta_n$ on all of $S$.

Even though the geometry is clear, the analysis becomes much more complicated, particularly for the latter half of the lemma. Our functions need not be differentiable, but the estimates from the Mean Value Theorem hold because we obtain a uniform bound to replace the derivative of the function. The essential idea is to extract the uniform bound from $v$ having bounded image in $\RR$. 

Returning to our original setup, it is evident that Lemma \ref{l.key} provides a sufficient replacement to (ii), but it is not clear how monotonicity and the hyperbolic injective property imply positivity. The practical application of Lemma \ref{l.key} is when we impose further conditions on $v$, such as a anti Lipschitz condition. A function $v$ is anti Lipschitz if there exists a constant $L>0$ such that for any $\omega, \Tilde{\omega}\in \Omega$,
\[|v(\omega) -  v(\Tilde{\omega})|\geq Ld(\omega,\Tilde{\omega})\]
where $d$ is some metric on $\Omega$. In particular, Jitomirskaya and Kachkovskiy studied these potentials and were able to prove localization in the quasi-periodic setting, see \cite{JK1, JK2}.

This class of anti Lipschitz potentials does not contain all the possible potentials that can be utilized. Thus, we introduce the class of anti $\alpha$-H\"older potentials. That is, $v:\Omega \rightarrow \RR$ is anti $\alpha$-H\"older on $\Omega$, or globally anti $\alpha$-H\"older, if for some $\alpha\in (0,\infty)$ there exists an $H > 0$ such that for any $\omega,\Tilde{\omega}\in \Omega$,
\begin{equation}\label{d.lowerholder}
    |v(\omega)- v(\Tilde{\omega})|\geq Hd(\omega,\Tilde{\omega})^\alpha.
\end{equation} 
It is apparent that the new class of potentials contains all anti Lipschitz functions. We note that for some base dynamics $(\Omega, T)$, there may exist an $\alpha(\Omega)>0$ such that the set of anti-Hölder functions are empty for all $(0,\alpha]$. Thus, our result automatically holds when considering such $\alpha$. For examples of potentials in the class of anti $\alpha$-Holder, we refer to Application \hyperlink{Application2}{2}.

Provided a potential in the class of anti $\alpha$-H\"older and depending on the base dynamics, we are able to emulate Zhang's method to achieve positive Lyapunov exponents. To express the significance of Lemma \ref{l.key} we explore two applications: subshifts of finite type and $d$-dimensional tori with a map with at least one expanding direction.

For the following application sections, we leave out explicit definitions to introduce the main goals of each section. The definitions will be provided in the corresponding section where the proofs lie; for Application \hyperlink{Application1}{1} and \hyperlink{Application2}{2}, see Sections \hyperlink{prelim}{4.1} and \hyperlink{OEDT}{5}, respectively.

\subsection{Application 1: Subshift of finite type} \hypertarget{Application1}{}

For this section, let $\Omega$ represent a subshift of finite type and $T$ represent the left shift map. Let $\mu$ be a $T$-invariant probability measure with bounded distortion property and fully supported on $\Omega$. For each $\omega\in \Omega$, define a local unstable leaf $W^u_{loc}(\omega)$, which is the set of elements that agree with $\omega$ on components $n\leq 0$. For the subshifts in the half-line setting we write $\Omega^+, T_+, \mu^+$ with the same assumptions. For concrete definitions we refer to Section \hyperlink{prelim}{4.1}.

For our purposes we weaken the assumption of (\ref{d.lowerholder}) to uniformly anti $\alpha$-H\"older along local unstable sets. That is, for some $\alpha\in (0,\infty)$ there exists a uniform constant $H>0$ such that for any $\omega^-\in \Omega^-$ then for any $\omega, \Tilde{\omega}\in W^u_{loc}(\omega^-)$ 
\[|v(\omega)- v(\Tilde{\omega})|\geq Hd(\omega,\Tilde{\omega})^\alpha.\]
It is apparent that (\ref{d.lowerholder}) implies the above. The converse direction is false in general, as one may not be able to compare elements in different local unstable sets, see Example \ref{ex.3}. This leads us to our main theorem in this section.

\begin{theorem}\label{t.fullPLE}
    Consider the full-line subshift of finite type $(\Omega, T,\mu)$ as above. Let $v$ be uniformly anti $\alpha$-H\"older continuous, bounded, and monotone with respect to dictionary order all on $W^u_{loc}(\omega)$ for every $\omega\in \Omega$. Consider the family of Schr\"odinger operators as in (\ref{d.sop}). Then there exists a $C_0 = C_0(v)>0$ and $\kappa = \kappa(\mu)\geq 1$ such that for any $\lambda> 0$,
    \[L(E;\lambda) > \kappa^{-1} \log\lambda - C_0 \text{ for all } E\in \RR.\]
\end{theorem}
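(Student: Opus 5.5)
We follow Zhang's five-step scheme, with Lemma \ref{l.key} supplying step (ii). The first reduction is from the full shift to the one-sided shift. The potential depends on the whole sequence, but monotonicity and the anti $\alpha$-H\"older bound only concern local unstable leaves. So we work leaf by leaf. For each $\omega^-\in\Omega^-$, we identify $W^u_{loc}(\omega^-)$ with a cylinder in $\Omega^+$, ordered by dictionary order. On this cylinder $T^n$ acts injectively on $n$-cylinders, since a word of length $n$ determines the branch. Taking $\P_n$ to be the partition into $n$-cylinders (intersected with admissible words), $T$ satisfies the hyperbolic injective condition along unstable leaves. The conditional measures of $\mu$ on these leaves are comparable to $\mu^+$ by bounded distortion, with a constant absorbed into $\kappa(\mu)$. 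Hence it suffices to prove a uniform lower bound on the finite-$n$ growth $\frac1n\int\log\|A_n\|$ along leaves, uniformly in the phase $\omega^-$.

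\textbf{Step (i).} Polar-decompose $A^{(E-\lambda v)}$. For $|E|$ large compared to $\lambda\|v\|_\infty$ the cocycle is uniformly hyperbolic with exponent about $\log|E|$, so we may assume $E/\lambda\in K$ for a fixed compact $K$ (a neighborhood of the range of $v$), with $t=E/\lambda$. Write the cocycle as $\Lambda(\omega)R_{\theta(\omega)}$, where $\|\Lambda\|\approx\lambda|v-t|$ and the angle is $\theta(\omega,t)$. Define $\theta_n=\phi_n+\theta\circ T^n$ as in Section 2.

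\textbf{Step (ii).} Apply Lemma \ref{l.key} on each leaf. For $\lambda>\lambda_0(v)$, the function $\theta_n$ is monotone on each $n$-cylinder, and
\[|\theta_n(\omega',t)-\theta_n(\omega,t)|\ge \eta\,|v(T^n\omega')-v(T^n\omega)|\ge \eta H\, d(T^n\omega',T^n\omega)^\alpha .\]
Small $\lambda$ are trivial, since the bound $\kappa^{-1}\log\lambda - C_0$ is then negative once $C_0$ is enlarged.

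\textbf{Step (iii).} Monotonicity on each $n$-cylinder means $\theta_n$ crosses each level $\frac\pi2+\pi\ZZ$ at most a bounded number of times per cylinder. The bounded range of $v$ makes this number independent of $n$. Let $c_j$ be these crossing points, the "bad" points where the expansion of $\Lambda$ can be undone by the rotation.

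\textbf{Step (iv).} Estimate the set where $\theta_n$ is within $\delta$ of $\frac\pi2+\pi\ZZ$. By the inequality above, inside a cylinder $S\in\P_n$ this set lies in $\{\omega: d(T^n\omega,T^nc_j)\le(\delta/\eta H)^{1/\alpha}\}$. Pulling back through the injective branch $T^n|_S$ and using bounded distortion of $\mu$ gives measure at most $C\mu(S)\,\mu^+(B(T^nc_j,(\delta/\eta H)^{1/\alpha}))$. In the symbolic metric such a ball is a cylinder of length about $\alpha^{-1}\log(1/\delta)$, so its measure is at most $\delta^{c}$, where $c$ depends on the Gibbs/distortion constants. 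This exponent $c$ is the source of $\kappa(\mu)$.

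\textbf{Step (v).} Run Young's argument. Choose $\delta=\lambda^{-\gamma}$. Off the bad set, the vector $\vec e_1$ is mapped with growth at least $\lambda\delta$ per step. Summing over steps by a stopping-time induction, with bad visits weighted by their measure $\delta^c$, gives
\[\frac1n\int\log\|A_n\|\,d\mu\ge(1-O(\lambda^{-c\gamma}))\log(\lambda\delta)-C\ge\kappa^{-1}\log\lambda-C_0 .\]
This bound holds for every $E$, with $C_0$ absorbing the uniformly hyperbolic regime and the small-$\lambda$ regime.

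The main obstacle is step (iv), together with making the induction in (v) uniform. The bad set must be controlled not just at one time but along orbits, the leaves change with $\omega^-$, and the uniformity of $H$ across leaves is exactly what is needed there. I would first try to obtain a measure bound that is uniform over leaves via bounded distortion, and then feed it into Zhang's inductive argument.
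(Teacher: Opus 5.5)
Your reduction to unstable leaves (conditional measures comparable to $(I_{\omega^-})_*\mu^+_j$) matches the paper. So do your use of Lemma \ref{l.key} with the anti-H\"older bound and your conversion of a $\delta$-neighbourhood of a bad point into a cylinder of length $N\approx\log(1/\delta)$ with relative mass $\lesssim\delta^{c}$.

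The genuine gap is Step (iii). Monotonicity gives at most one crossing of each level $\frac\pi2+k\pi$ per cylinder. What Step (iv) needs, however, is control of how many levels $\theta_n$ sweeps on an $n$-cylinder, and the bounded range of $v$ does not bound this independently of $n$. Since $\phi_n$ keeps $\theta_{n-1}$ in its $\pi\ZZ$-interval and $\theta\circ T^n$ has range $<\pi$, it only yields the increment bound $Card(\C_{n,\underline i})\le Card(\C_{n-1,\underline i})+1$ of Lemma \ref{l.upperboundofbadpoints}. Crossings born at different times can pile up in one cylinder. For example, if the point $c^*$ where $v=t$ begins with a block $a^m$, then for $\lambda$ large in terms of $m$ the crossings born near $c^*,ac^*,\dots,a^nc^*$ all lie in $[0;a^n]^+$ for $n\le m$. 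Since $t$ ranges over a whole interval, no bound uniform in $n,t,\lambda$ exists. Without such a bound, Step (iv) only gives $\mu^+(D_n(\delta))\lesssim\delta^{c}\sum_{S\in\P_n}\#\{\text{crossings in }S\}\,\mu^+(S)$, and that sum is uncontrolled.

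The missing idea is the paper's weighted count (Lemma \ref{l.boundsum}). Bounded distortion gives $\mu^+([0;\underline i b]^+)\le p\,\mu^+([0;\underline i]^+)$ with $p=B/(B+1)<1$ (Lemma \ref{l.boundratio} and (\ref{e.decayrate})). Setting $\Sigma_n=\sum_{\beta\in\C_n}\mu^+([0;\underline\beta_n]^+)$, the $+1$ recursion becomes $\Sigma_{n+1}\le p\,\Sigma_n+1$, so $\Sigma_n\le(1-p)^{-1}$. Old crossings lose weight geometrically as they are trapped in ever smaller cylinders, while each step adds total weight at most $1$.

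\emph{Smaller points.}
(a) Neighbourhoods of crossing points do not cover $D_n(\delta)$: $\theta_n$ may come within $\delta$ of a level near an end of a cylinder without crossing it. The paper adds the extreme points $m^{\underline i},M^{\underline i}$ as anchors (Lemma \ref{l.Dntocylinder}), which costs only $2\sum_{S}\mu^+(S)\,C\delta^{c}$.

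(b) In the polar form the diagonal entry is $\lambda\sqrt{g}$ with $g=1+(t-v)^2\ge1$, not $\approx\lambda|v-t|$. Hence $\|A_n\vec e_1\|\ge\lambda^n\prod_{j<n}|\cos\theta_j|$ holds pointwise, and you need no stopping time and no choice $\delta=\lambda^{-\gamma}$. Applying the measure bound at all scales $\delta2^{-i}$ gives $\int\log|\cos\theta_j|\,d\mu^+\ge-C$ uniformly in $j$, hence $\log\lambda-C_1$ on the half-line. Some such all-scale integrability is needed in your version too, because weighting bad visits by their measure does not control $\log|\cos\theta_j|$ there.

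(c) Consequently $\kappa$ is the density bound $\kappa^{-1}\le\psi\le\kappa$ of Lemma \ref{l.BDLP_LRNbound}. It is not the exponent $c$, which only enters the additive constant.

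(d) Since $W^u_{loc}$ fixes the coordinates $j\le0$, $T^n$ maps an $n$-cylinder of a leaf onto several leaves, one per value of $\omega_n$. Across these leaves $v$ is not assumed monotone or anti-H\"older. Taking $\P_n$ to be the $(n+1)$-cylinders of the leaf keeps refinement and injectivity and puts $T^nS$ in a single leaf.
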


Before following the framework of (i) - (v), we must reduce the full-line to the half-line. A direct application of Rohklin's disintegration theorem leads to the half-line case. Hence it suffices to prove Theorem \ref{t.halfPLE} to prove Theorem \ref{t.fullPLE}. See Section \hyperlink{fullline}{4.3} for details.

\begin{theorem}\label{t.halfPLE}
    Consider the half-line subshift of finite type $(\Omega^+, T_+, \mu^+)$ as above. Let $v$ be anti $\alpha$-H\"older, continuous, bounded, and monotone with respect to dictionary order on $\Omega^+$. Consider the family of Schr\"odinger operators for the half-line. Then there exists a $C_1 = C_1(v)>0$ such that for any $\lambda> 0$,
    \[L(E;\lambda) > \log\lambda - C_1 \text{ for all } E\in \RR.\]
\end{theorem}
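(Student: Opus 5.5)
We follow Zhang's scheme (i)--(v), with Lemma \ref{l.key} replacing step (ii). Since $\Omega^+$ is a half-line subshift of finite type, we take $\P_n$ to be the partition of $\Omega^+$ into admissible cylinders of length $n$, $[\omega_0\cdots\omega_{n-1}]$. These refine one another, and $T_+^n$ restricted to such a cylinder is injective and order preserving for the dictionary order. So $T_+$ satisfies the hyperbolic injective condition. We may assume $\lambda$ is large, since for small $\lambda$ the bound holds trivially once $C_1 \ge \log\lambda_0$. It also suffices to treat $E$ in a compact set: for $|E|>\lambda(\|v\|_\infty+2)+2$ the cocycle is uniformly hyperbolic with exponent at least $\log\lambda$. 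Writing $E=\lambda t$ with $t\in K$ compact, we perform the polar decomposition (step (i)),
\[
A^{(E-\lambda v)}(\omega)=\Lambda(\omega,t)R_{\phi(\omega,t)},
\]
where $\Lambda$ is diagonal with entries of size comparable to $\lambda$ away from the set where $t-v$ is small. We then define $\phi_n$, $\theta_n=\phi_n+\theta\circ T_+^n$ as in Section 2.

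Lemma \ref{l.key} gives $\theta_n(\cdot,t)$ monotone on each cylinder $S\in\P_n$. Combining \eqref{e.l.monotone} with the anti $\alpha$-H\"older property, we get for $\omega,\omega'\in S$
\[
|\theta_n(\omega',t)-\theta_n(\omega,t)|\ \ge\ \eta H\, d(T^n\omega',T^n\omega)^\alpha .
\]
For step (iii), monotonicity on each cylinder, together with the bounded total variation of $\theta_n$ coming from $v$ bounded, implies that on each $S\in\P_n$ the set where $\theta_n\in\frac{\pi}{2}+\pi\ZZ$ consists of boundedly many points. Discontinuities occur only at cylinder boundaries.

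For step (iv), we estimate the measure of $\{\omega\in S: \operatorname{dist}(\theta_n(\omega),\frac{\pi}{2}+\pi\ZZ)<\delta\}$. By the displayed inequality, its image under $T^n$ lies in a set of diameter at most $(\delta/\eta H)^{1/\alpha}$ around finitely many points. In the symbolic metric $d(\omega,\omega')=2^{-k}$, such a set is a union of boundedly many cylinders of depth about $\alpha^{-1}\log_2(\eta H/\delta)$. Bounded distortion of $\mu^+$ then gives
\[
\mu^+(\text{bad set}\cap S)\le C\delta^{\beta}\mu^+(S)
\]
for some $\beta=\beta(\alpha,\mu^+)>0$. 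Summing over $S\in\P_n$ gives a global bound $C\delta^\beta$ uniformly in $n$ and $t\in K$. We choose $\delta=\lambda^{-\gamma}$ for a small $\gamma$.

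Step (v) runs Young's argument. Along the orbit of $\vec e_1$, each step multiplies the norm by roughly $\lambda|\cos\theta_n|$ up to an error. Hence
\[
\frac1N\log\|A_N\vec e_1\| \ge \log\lambda - C - \frac1N\sum_{n<N}\bigl|\log|\cos\theta_n|\bigr|\cdot \mathbf 1_{\text{bad}_n}.
\]
The bad contribution is controlled by integrating the tail bound from (iv): $\int|\log|\cos\theta_n||\,d\mu^+\le C'$, uniformly, since $\delta^\beta$ is integrable against $d\log\delta$. Taking $N\to\infty$ yields $L(E;\lambda)\ge\log\lambda-C_1$.

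The main obstacle is step (iv): converting the anti-H\"older lower bound into a measure estimate uniform in $n$ and $E$. This needs care because $\theta_n$ is discontinuous at cylinder boundaries, and the bounded distortion constants must be made to interact with the H\"older exponent. I would first try to prove the measure bound cylinder by cylinder using $T^n|_S$ pushing $\mu^+|_S$ to a measure comparable to $\mu^+(S)\,\mu^+$ on its image.
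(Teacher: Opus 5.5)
Your outline matches the paper's in steps (i), (ii), (v) and in the per-point part of (iv). However, step (iii) as you state it is not justified, and the summation in step (iv) depends on it. Boundedness of $v$ only controls the variation of $\theta\circ T_+^n$ on $S$, which is less than $\pi$. The other summand $\phi_n$ inherits the full winding of $\theta_{n-1}$ on $S$: it passes through a whole $\pi$-interval at every point where $\theta_{n-1}\in\frac{\pi}{2}+\pi\ZZ$. So the variation of $\theta_n$ on a cylinder of length $n$ is not uniformly bounded.

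The only a priori bound is the one in Lemma~\ref{l.upperboundofbadpoints}. The number of $\frac{\pi}{2}+\pi\ZZ$ points of $\theta_{n+1}$ on $[0;\underline{i}a]^+$ is at most the number of such points of $\theta_n$ lying in $[0;\underline{i}a]^+$, plus one. Nothing prevents all points of a parent cylinder from falling into the same child. Along a nested chain of cylinders the count can therefore grow by one per step, and the only available bound on a cylinder of length $n$ is of order $n$. Your estimate $\mu^+(\text{bad set}\cap S)\le C\delta^{\beta}\mu^+(S)$ really carries a factor $k_S:=\#(\C_n\cap S)$. Summing over $S$ gives $C\delta^{\beta}\sum_{S\in\P_n}k_S\,\mu^+(S)$, which your argument leaves uncontrolled.

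The missing idea is to bound this measure-weighted count instead of the count per cylinder. Bounded distortion gives a uniform $p<1$ with $\mu^+([0;\underline{i}b]^+)\le p\,\mu^+([0;\underline{i}]^+)$ for every child (Lemma~\ref{l.boundratio}). Each point inherited from the parent lands in exactly one child. Combined with the recursion above, this gives $\Sigma_{n+1}\le p\,\Sigma_n+1$ for $\Sigma_n:=\sum_{\beta\in\C_n}\mu^+([0;\underline{\beta}_n]^+)$, hence $\Sigma_n\le(1-p)^{-1}$ uniformly in $n$ (Lemma~\ref{l.boundsum}).

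With this in hand, your per-point estimate finishes step (iv). The $\delta$-neighbourhood of each crossing lies in a cylinder of depth $n+N$ with $p^N\lesssim\delta^{-\ln p}$ (Lemma~\ref{l.subsets}). The near-endpoint pieces at $m^{\underline{i}}$ and $M^{\underline{i}}$ of each cylinder contribute in total only of order $\gamma p^N$. Together these give $\mu^+(D_n(\delta))\le C\delta^{-\ln p}$ uniformly in $n$, $t$ and $\lambda\ge\lambda_0$, after which your step (v) goes through as written.
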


Observe that there is a $\kappa^{-1}$ constant difference between the theorems. The constant arises as one cannot directly apply Theorem \ref{t.halfPLE} on the disintegrated measure. Indeed, the reduction may lead to measures that lack $T_+$-invariance. By paying the price of a constant, one can pass to a $T_+$-invariant measure to apply Theorem \ref{t.halfPLE}.

For the half-line, steps (i), (iii), and (v) all follow by the arguments in \cite{zhenghe2}. Steps (ii) and (iv) contain essential difficulties. For (ii), there is no differentiable structure present for subshifts of finite type; the estimate in Lemma \ref{l.key} is utilized as the replacement of (ii). On the other hand, (iv) has two complications: relating $\delta$-balls around $\frac{\pi}{2} + \pi\ZZ$ points to measurable sets and estimating the measure of the sum of all such measurable sets. The first is primarily handled by the estimate from Lemma \ref{l.key}. The second is resolved through the bounded distortion property of $\mu$.

We conclude by remarking some well-known dynamical systems that are encapsulated in the result:
\begin{enumerate}
    \item Any subshift of finite type with a measure of maximal entropy.
    \item Bernoulli-shift on $n$-letters for $n\in \NN_{\geq 2}$ with Bernoulli measure.
    \item Topological Markov chains with Markov measure.
\end{enumerate}

\subsection{Application 2: One expanding direction on d-dimensional tori} \hypertarget{Application2}{}

We turn our attention to a more concrete class of systems. It is well known that hyperbolic dynamics can be encoded into shift space, but it is not true that the subshift with dictionary order respects the order of the original dynamics encoded. For example, Arnold's cat map and local unstable leaves, as defined in \cite{chiem}, have a significant ordering difference compared to the encoded local unstable sets. On the other hand, some systems do preserve order, most notably the doubling map on $\TT$. Even with order preserved, the set of anti $\alpha$-H\"older potentials on the encoded space can drastically differ from the set on the original space. Lemma \ref{l.key} enables us to prove positivity without considering the encoded space. In particular, we are able to prove positivity for maps with only one expanding direction! 

For any positive integer $d\geq 1$, define the set $G(d,\ZZ)$ to be the collection of integer valued $d\times d$ matrices with nonzero determinant and at least one real-eigenvalue $|\beta|\geq 2$. For notation, let $u$ represent the associated eigenvector to the eigenvalue $\beta$. Given a matrix $M\in G(d,\ZZ)$ we let $T_M$ represent the induced map on $\TT^d$ and when the context is clear we write $T$. We remark that Lebesgue measure is $T$-invariant, since $M$ has nonzero determinant. Similarly to the previous section we consider potentials that are uniformly anti $\alpha$-H\"older along local unstable leaves, but we only consider $\alpha\in [1,\infty)$. Briefly, local unstable leaves are line segments contained in the fundamental domain pointed in the direction $u$, see Section \hyperlink{OEDT}{5} for concrete details.

The main theorem of the section is the following.

\begin{theorem}\label{t.PLEhd}
    Consider any positive integer $d\geq 1$ and $M\in G(d,\ZZ)$. Consider the measurable system $(\TT^d, T_M, Leb)$ and a potential $v$ that is uniformly anti $\alpha$-H\"older, continuous, bounded, and monotone all along local unstable leaves. For the family of Schr\"odinger operators defined as in (\ref{d.sop}), there exists a constant $C_0 = C_0(v)>0$ such that for all $\lambda > 0$
    \[L(E; \lambda) > \log\lambda - C_0\]
    for all $E\in \RR$.
\end{theorem}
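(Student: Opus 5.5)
The plan is to run Zhang's five-step scheme (i)--(v), using Lemma \ref{l.key} in place of the derivative bound of step (ii). The torus $\TT^d$ is foliated by local unstable leaves: segments in the direction $u$, of length comparable to one, inside the fundamental domain. Each leaf is ordered by the parameter $s$ along $u$. Since $|\beta|\geq 2$ and $Mu=\beta u$, the map $T$ sends a leaf segment of length $\ell$ affinely onto a segment of length $|\beta|\ell$ in the direction $u$, which wraps around the torus.

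\textbf{Hyperbolic injective condition on each leaf.} For a leaf $W$, let $\P_n(W)$ be the partition of $W$ into the maximal subsegments $S$ that $T^n$ maps injectively onto a single local unstable leaf, i.e.\ without crossing the fundamental-domain boundary. Such segments have length at most $C|\beta|^{-n}$, and the partitions are nested. On each $S\in\P_n(W)$ the map $T^n$ is affine with factor $\beta^n$, so it is order preserving or order reversing. Since $v$ is monotone along leaves, Lemma \ref{l.key} applies leaf by leaf; if $\beta<0$, I replace $v$ by $-v$ on alternate iterates, or pass to $M^2$. Combining (\ref{e.l.monotone}) with the uniform anti $\alpha$-H\"older bound along leaves, for $\omega,\omega'\in S$ we get
\[|\theta_n(\omega',t)-\theta_n(\omega,t)|\geq \eta H\, d(T^n\omega,T^n\omega')^\alpha=\eta H|\beta|^{n\alpha}d(\omega,\omega')^\alpha .\]
This is the substitute for ``$\partial_u\theta_n\gtrsim|\beta|^n$''. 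Since $\alpha\geq1$ and the segments have length at most $C|\beta|^{-n}$, the right-hand side equals $\eta H|\beta|^{n}d(\omega,\omega')\cdot(|\beta|^{n}d(\omega,\omega'))^{\alpha-1}$ with the last factor at most $C^{\alpha-1}$. So we get expansion at least of size $|\beta|^{n\alpha}$ times a distance to the power $\alpha$, which is what the measure estimate needs.

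\textbf{Steps (iii) and (iv).} Step (i), the polar decomposition, turning $A^{(E-\lambda v)}$ for large $\lambda$ into Zhang's form with $\theta=\theta(\omega,t)$ and $t=E/\lambda$ in a compact $K$, is taken from \cite{zhenghe2}; for $|E|$ large, positivity is trivial. For step (iii), monotonicity of $\theta_n$ on each $S\in\P_n$ means the set $\{\theta_n\in\frac{\pi}{2}+\pi\ZZ\}$ meets each $S$ in at most $O(1)$ points, because the range of $\theta_n$ on $S$ is bounded by $\|\phi_n\|_\infty+\lambda$-independent constants. The number of pieces is $\#\P_n\lesssim|\beta|^n$ on each leaf. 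For step (iv), the estimate above turns the condition $|\theta_n-(\frac{\pi}{2}+k\pi)|<\delta$ on $S$ into a subinterval of length at most $(\delta/\eta H)^{1/\alpha}|\beta|^{-n}$. Summing over pieces gives leaf measure $\lesssim(\delta/\eta H)^{1/\alpha}$, uniformly in $n$, in the leaf and in $t\in K$. Lebesgue measure disintegrates into normalized length on leaves (a product in the coordinates $u$ and its complement), so the same bound holds for the Lebesgue measure of the bad set in $\TT^d$. No bounded-distortion argument is needed here.

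\textbf{Step (v) and the main obstacle.} Following Young's and Zhang's argument, take $\delta=\lambda^{-c}$. Outside the bad sets, each step of the cocycle expands $\vec e_1$ by a factor $\gtrsim\lambda$, and the total measure of the bad sets is small, so $L(E;\lambda)>\log\lambda-C_0$. The main obstacle is step (ii): verifying that the inductive angle functions on $\TT^d$ actually fit the abstract hypotheses of Lemma \ref{l.key}. Two issues arise. First, $T^n$ of a leaf piece must land inside a single leaf, where the order and the anti-H\"older constant apply, and the boundary effects of the fundamental domain must be handled. Second, $\phi_n$ depends on $\omega$ only through the previous iterates along the same leaf. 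I would handle the first by choosing the leaves as full-length segments transverse to a fixed fundamental domain and discarding a measure-zero set of boundary points, and the second by checking that $\phi_n$ is measurable with respect to the leaf structure. The expansion exponent $\alpha\geq1$ is exactly what keeps the piece count times the bad-interval length bounded.
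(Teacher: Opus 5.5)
Your architecture matches the paper's: leaves in direction $u$, pieces cut where $T^jW$ meets the boundary of the fundamental domain, and Lemma \ref{l.key} on each piece giving $|\theta_n(\omega')-\theta_n(\omega)|\geq \eta H|\beta|^{n\alpha}d(\omega,\omega')^\alpha$ (this is Corollary \ref{c.keycat}), followed by counting and summing. The genuine gap is in step (iii).

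Monotonicity of $\theta_n$ on $S$ requires $\phi_n$ to be the continuous lift lying in the same $\pi$-interval as $\theta_{n-1}$. Moreover, $\phi_n=\frac{\pi}{2}+k\pi$ exactly where $\theta_{n-1}=\frac{\pi}{2}+k\pi$. Consequences:
\begin{itemize}
\item every crossing of $\theta_{n-1}$ inside $S$ is inherited by $\phi_n$;
\item the range of $\theta_n|_S$ is not bounded independently of $n$, so there is no useful $\|\phi_n\|_\infty$;
\item a single piece can contain a number of crossings that grows with $n$. For example, take the doubling map near the periodic point $p=\frac13$ with $v(p)=t$. Then $\theta\circ T^{j}$ passes through $\frac{\pi}{2}$ at $p$ for every even $j$, and old crossings persist.
\end{itemize}
So your per-piece $O(1)$ claim is false. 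What is true, and what the paper proves (Lemma \ref{l.upperboundofbadpoints}, leading to (\ref{e.CARD2x})), is an inductive count. Since $\theta\circ T^n$ has range of length $<\pi$, on each $S\in\P_n$ the number of crossings of $\theta_n$ is at most that of $\theta_{n-1}$ plus one. Hence $\#\C_n\leq\#\C_{n-1}+\#\P_n\leq\sum_{j\le n}\#\P_j\lesssim|\beta|^n$. Only this total enters your step (iv), so your measure bound survives once this argument replaces the per-piece claim. Also, count times interval length is $\lesssim\delta^{1/\alpha}$ for every $\alpha>0$. So $\alpha\geq 1$ is not what makes it work; it is simply forced for bounded monotone functions on segments.

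Step (v), as you phrase it, also does not give the stated bound. A single threshold $\delta=\lambda^{-c}$ only yields $|\cos\theta_j|\gtrsim\lambda^{-c}$ off the bad set, i.e.\ $(1-c)\log\lambda$. It also leaves $\log|\cos\theta_j|$ uncontrolled on the bad set. The paper instead uses $\log\|A_n\vec{e}_1\|\geq n\log\lambda+\sum_{j<n}\log|\cos\theta_j|$ and proves $\int\log|\cos\theta_j|\geq -C$ by applying the measure bound at all scales $2^{-i}\delta$. This needs the bound for every $\delta$, uniformly in $n$, $t$ and $\lambda\geq\lambda_0$, which your step (iv) does supply.

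Two smaller points.
\begin{itemize}
\item In $d\geq 2$, your ``maximal segments on which $T^n$ lands in one leaf'' are not nested. For the cat map $(x_1,x_2)\mapsto(2x_1+x_2,x_1+x_2)$, a short segment crossing $\{x_1=1\}$ at $p=(1,\frac12)$ is mapped to a segment through $(\frac52,\frac32)$ lying inside one unit square. The induction in Lemma \ref{l.key} needs $\theta\circ T^j$ monotone on $S$ for every $j\leq n$. So you must take the common refinement over $j\leq n$, as the paper does with $\D'_{n,z}$.
\item Neither of your fixes for $\beta<0$ works. You cannot alter $v$ along the orbit. Passing to $M^2$ gives a two-step cocycle whose angle recursion still contains $\theta\circ T^{j}$ for odd $j$, which is monotone in the opposite direction. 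The paper's argument tacitly uses that $T^n|_S$ preserves order, i.e.\ $\beta>0$.
\end{itemize}
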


To prove Theorem \ref{t.PLEhd}, consider a positive integer $d\geq 2$. The first step is to reduce the problem to $1$-dimension, as steps (i) - (v) necessitate $1$-dimension. Analogous to Arnold's cat map, we will partition $\TT^d$ into union of line segments. The generalization is to find a set $Z$ of dimension $d-1$, contained in the fundamental domain $F$ of $\TT^d$, so that every $z\in Z$ corresponds to a unique line segment $W_z$. By Rohklin's disintegration theorem the problem is reduced to proving positivity on $W_z$. With this reduction, the argument on $W_z$ is the same that one would run for $d=1$. We then apply (i) with a minor difference by considering the convergence with respect to $C^0$-topology instead of $C^1$. After, we partition $W_z$ appropriately to apply Lemma \ref{l.key} to replace (ii). The remaining steps (iii) - (v) follow a similar argument as \cite{zhenghe2}. Consequently, Theorem \ref{t.PLEhd} extends results from \cite{zhenghe2, chiem}, which we formulate in two corollaries.

\begin{corollary}
    Consider the expanding map $(\TT, x\mapsto kx \mod 1, Leb)$ with integer $k\geq 2$. Let $v$ be a potential that is anti $\alpha$-H\"older, continuous, bounded, and monotone. For the family of Schr\"odinger operators defined as in (\ref{d.sop}), replacing $\ZZ$ with $\NN$, there exists a constant $C_0 = C_0(v) > 0$ such that for all $\lambda>0$
    \[L(E;\lambda) > \log\lambda - C_0\]
    for all $E\in \RR$.
\end{corollary}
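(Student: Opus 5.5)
The corollary should follow directly from Theorem \ref{t.PLEhd} in the case $d=1$. First I will identify the system as an instance of the theorem. Take $M = (k) \in \ZZ^{1\times 1}$ with integer $k\geq 2$. Its determinant is $k\neq 0$, and its only eigenvalue $\beta = k$ satisfies $|\beta|\geq 2$, so $M\in G(1,\ZZ)$. The induced map $T_M$ on $\TT$ is exactly $x\mapsto kx \bmod 1$, and Lebesgue measure is $T_M$-invariant, as already remarked for $G(d,\ZZ)$. Because $T_M$ is not invertible, the operator is taken on $\NN$ with the Dirichlet condition $\psi_{-1}=0$. This is the convention fixed in the introduction, and it is the setting Theorem \ref{t.PLEhd} addresses for noninvertible $T_M$.

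Next I check the hypotheses on $v$. In dimension one the eigenvector $u$ spans all of $\RR$. So a local unstable leaf, a line segment in the fundamental domain $F=[0,1)$ pointing in direction $u$, is the whole fundamental domain, and the set $Z$ parametrizing the leaves is a single point. The Rokhlin disintegration is then trivial. Global anti $\alpha$-H\"older continuity, continuity, boundedness, and monotonicity on $\TT$, viewed on $[0,1)$, are therefore the same as the corresponding uniform hypotheses along local unstable leaves.

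One point needs care. Theorem \ref{t.PLEhd} only considers $\alpha\in[1,\infty)$, while the corollary allows any $\alpha>0$. I expect this to be the only real obstacle, and I would settle it by showing no anti $\alpha$-H\"older functions exist for $\alpha<1$ on an interval, which is consistent with the remark that such classes may be empty. Suppose $|v(x)-v(y)|\geq H|x-y|^\alpha$ with $\alpha<1$, and $v$ is monotone and bounded on $[0,1)$. Split $[0,1)$ into $N$ equal pieces. Monotonicity makes the increments along consecutive grid points telescope, so the total variation is
\[|v(1^-)-v(0)| \geq \sum_{j} H N^{-\alpha} = H N^{1-\alpha}.\]
This tends to $\infty$ as $N\to\infty$, contradicting boundedness. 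Hence the $\alpha<1$ case is vacuous, and the corollary holds there trivially.

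With $\alpha\geq 1$, applying Theorem \ref{t.PLEhd} with $d=1$ gives $C_0=C_0(v)$ such that $L(E;\lambda)>\log\lambda - C_0$ for all $\lambda>0$ and all $E\in\RR$.
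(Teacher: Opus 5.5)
Your proposal is correct and takes the same approach as the paper, which obtains this corollary simply as the $d=1$ case of Theorem \ref{t.PLEhd} with $M=(k)\in G(1,\ZZ)$. In that case the leaf is all of $[0,1)$, and the paper treats it directly via partitions and Lemma \ref{l.key}, so no disintegration is needed. Your packing argument, showing that no bounded monotone anti $\alpha$-H\"older function on an interval exists for $\alpha<1$, validly reconciles the theorem's restriction $\alpha\in[1,\infty)$ with the corollary's unrestricted $\alpha$; the paper addresses this point only through its general remark that such classes may be empty.
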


\begin{corollary}
    Consider any hyperbolic $\mathrm{SL}(2,\ZZ)$ and consider $(\TT^2, T_M,  Leb)$. Let $v$ be a potential that is uniformly anti $\alpha$-H\"older, continuous, bounded, and monotone all along the local unstable leaves. For the family of Schr\"odinger operators (\ref{d.sop}), there exists a constant $C_0 = C_0(v)>0$ such that for all $\lambda > 0$
    \[L(E; \lambda) > \log\lambda - C_0\]
    for all $E\in \RR$.
\end{corollary}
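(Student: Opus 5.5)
The corollary should follow directly from Theorem \ref{t.PLEhd} with $d=2$. The only work is to check that every hyperbolic matrix $M\in \mathrm{SL}(2,\ZZ)$ lies in $G(2,\ZZ)$, and that the local unstable leaves in the corollary are the ones used in the theorem.

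First, membership in $G(2,\ZZ)$:
\begin{enumerate}
\item $M$ has integer entries and $\det M = 1 \neq 0$.
\item Let $t=\mathrm{tr}\, M$. Hyperbolicity means $|t|>2$, and since $t\in\ZZ$ this gives $|t|\geq 3$.
\item The eigenvalues are the roots of $x^2 - tx + 1 = 0$, namely $\beta^{\pm 1} = \frac{t\pm\sqrt{t^2-4}}{2}$. Both are real because $t^2-4>0$.
\item The eigenvalue of larger modulus is $\beta = \sgn(t)\,\frac{|t|+\sqrt{t^2-4}}{2}$. Its modulus is increasing in $|t|$, so $|t|\geq 3$ gives
\[|\beta| \geq \frac{3+\sqrt{5}}{2} > 2 .\]
\end{enumerate}
So $M$ has a real eigenvalue with $|\beta|\geq 2$, and $M\in G(2,\ZZ)$.

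Second, the leaves. Let $u$ be the eigenvector for $\beta$. The local unstable leaves of Section \hyperlink{OEDT}{5} are the line segments in the fundamental domain pointing in direction $u$. These are the usual local unstable leaves of the toral automorphism used in the corollary and in \cite{chiem}. I would check this carefully against the definitions in Section \hyperlink{OEDT}{5}, but it appears to be only a matter of matching the definitions.

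Third, the hypotheses on $v$ and the measure. The hypotheses on $v$ in the corollary (uniformly anti $\alpha$-H\"older, continuous, bounded, and monotone along local unstable leaves) are exactly those of Theorem \ref{t.PLEhd}. Lebesgue measure is $T_M$-invariant because $|\det M| = 1$. Since $M$ is invertible over $\ZZ$, $T_M$ is invertible and the full-line operator (\ref{d.sop}) is the one in question.

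Applying Theorem \ref{t.PLEhd} then gives $C_0 = C_0(v) > 0$ with $L(E;\lambda) > \log\lambda - C_0$ for all $E\in\RR$ and all $\lambda>0$.

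There is no real obstacle. The only step with any content is the bound $|\beta|\geq 2$, and it rests on the integrality of the trace.
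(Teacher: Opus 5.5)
Your proposal is correct and takes the same route as the paper, which presents this corollary as an immediate specialization of Theorem \ref{t.PLEhd} to $d=2$, with the leaves $W_z$ of Section 5 being the unstable segments in direction $u$. Your trace argument, where $|\mathrm{tr}\,M|\geq 3$ forces $|\beta|\geq \frac{3+\sqrt{5}}{2}>2$, supplies the membership $M\in G(2,\ZZ)$ that the paper leaves implicit.
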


The corollaries contain the family of $C^1$ potentials with derivative bounded away from zero, since such potentials are anti $1$-H\"older. Indeed, for any positive integer $d\geq 1$ consider any potential $v$ that is $C^1$ on the fundamental domain of $\TT^d$ and is monotone in direction $u$ with directional derivative bounded away from zero. That is, $|D_u v| > c$ for some constant $c$. In either case, $D_u v > c$ or $D_u v< -c$, from the Mean Value Theorem, we obtain
\[|v(\omega)- v(\Tilde{\omega})|\geq c ||\omega - \Tilde{\omega}||\]
where $||\cdot||$ represents $d$-dimensional Euclidean metric.

On the other hand, recall that the essential assumptions were monotonicity and derivative uniformly bounded away from zero. Our corollaries allows us to relax the latter condition to state derivative positive or negative. One benefit is that our potentials are allowed to admit critical points, so long as we have monotonicity and can verify the anti $\alpha$-H\"older condition. In practice, verifying the anti $\alpha$-H\"older condition can be complicated unless additional assumptions are imposed. This leads to a natural first set of examples which are a class of $C^r$ potentials. 

Before being able to show that a class of $C^r$ functions are anti $\alpha$-H\"older, we need a preliminary example. In particular, the example shows that piecewise anti H\"older functions, with each piece having possibly differing $\alpha$ constants, are globally anti $\alpha$-H\"older. Moreover, if $v$ is anti $\alpha$-H\"older, then $v$ is anti $\beta$-H\"older where $\beta\in [\alpha, \infty)$. For simplicity, all the examples provided assume monotone increasing, but this is not necessary as there is an analogous adjustment of assumptions for monotone decreasing.

\begin{example}[Piecewise anti H\"older functions are globally anti H\"older.]\label{ex.piece}
    Let $v: [0,1) \rightarrow \RR$ continuous, monotone increasing, and bounded. Let $a\in(0,1)$ and $\alpha_1, \alpha_2\in[1,\infty)$. Assume that $v|_{[0,a]}$ is anti $\alpha_1$-H\"older with constant $H_1>0$ and $v|_{[a,1)}$ is anti $\alpha_2$-H\"older with constant $H_2>0$. Then $v$ is anti $\alpha$-H\"older with  constant $H = 2^{1-\alpha}\cdot \min\{H_1, H_2\}$ where $\alpha = \max\{\alpha_1, \alpha_2\}$.

    Note that if $x,y\in [0,a)$ or $x,y\in [a,1)$ then there is nothing to show, since anti $\alpha_1$-H\"older and anti $\alpha_2$-H\"older imply $\alpha$-H\"older. It remains to consider $y\in [0,a)$ and $x\in [a,1)$. Observe,
    \begin{align*}
        v(x) - v(y) &= v(x) - v(a) + v(a) - v(y)\\
        &\geq H_1|x-a|^{\alpha_2} + H_2|a-y|^{\alpha_1}\\
        &\geq \min\{H_1, H_2\}\cdot \Big(|x-a|^{\alpha} + |a-y|^{\alpha}\Big)
    \end{align*}
    Notice that the function $x\mapsto x^\alpha$ is convex, so by convexity with $t= \frac{1}{2}$ and points $(x-a)$ and $(a-y),$, we find:
    \[2^{1-\alpha}|x-y|^\alpha\ \leq \Big(|x-a|^{\alpha} + |a-y|^{\alpha}\Big)\]
    It follows that
    \[|v(x) - v(y)|\geq H|x-y|^\alpha.\]
    Thus, $v$ is anti $\alpha$-H\"older.
\end{example}

Example \ref{ex.piece} can be generalized to hold for any finite number of pieces. In particular, if $v:[0,1) \rightarrow \RR$ satisfies the following: there exists $0 = x_0 < x_1 < \cdots < x_n = 1$ and $\alpha_0,\dots, \alpha_{n-1}\in [1,\infty)$ such that $v|_{[x_j,x_{j+1}]}$ is anti $\alpha_j$-H\"older for each $j = 0, \dots, n-1$. Then $v$ is $\alpha$-H\"older where $\alpha = \max_j \alpha_j$, which can be shown by using an inductive argument with the method as in Example \ref{ex.piece}. Notably, this provides a construction to create new anti $\alpha$-H\"older functions from a finite number of anti $\alpha$-H\"older functions. This idea is utilized to show that a class of $C^r$ potentials are anti $\alpha$-H\"older.

\begin{example}[$C^r$ potentials]\label{ex.Cr}
    Let $v\in C^{r}([0,1))$ for $r\geq 1$, monotone increasing, bounded, and exists some $k\in\NN$ so that $x_1,\dots, x_k$ has derivatives vanishing up to orders $n_1,\dots, n_k \in 2\NN$, respectively. Moreover, we assume for each $j = 1,\dots, k$, then $v^{(n_j + 1)}(x_j) > 0$. 
    
    We claim that for each $x_j$, there exists a $\delta_j>0$ such that $v|_{\overline{B(x_j,\delta_j)}}$ is anti $r$-H\"older. Letting $M_j = \max_{k} |v^{(k)}(x_j)|$, then it is apparent that 
    \[M_j\sum_{k=1}^{r-n_j-1} \Big|\frac{x^k}{k!}\Big| \rightarrow 0\]
    when $x \rightarrow 0$. Thus, we may pick $\delta_j>0$ so that
    \[\frac{v^{(n_j+1)}(x_j)}{(n_j+1)!}  - M_j\sum_{k=1}^{r-n_j-1} \Big|\frac{(y-x)^k}{k!}\Big| > \frac{c}{2(n_j+1)!},\]
    for any $y\in \overline{B(x_j,\delta_j)}$. By a direct application of Taylor's theorem and, if necessary, shrinking $\delta_j$. Then
    \begin{align*}
        |v(y) - v(x_j)| &= \Bigg|\sum_{k = 1}^r \frac{v^{(k)}(x_j)}{k!}(y-x_j)^k\Bigg| \\
        &= \Bigg|\frac{v^{(n_j+1)}(x_j)}{(n_j+1)!} (y-x_j)^{n_j+1} + \sum_{k = n_j+2}^r \frac{v^{(k)}(x_j)}{k!}(y-x_j)^k\Bigg|\\
        &= |(y-x_j)|^{n_j+1} \Bigg|\frac{v^{(n_j+1)}(x_j)}{(n_j+1)!} + \sum_{k = n_j+2}^r \frac{v^{(k)}(x_j)}{k!}(y-x_j)^k\Bigg|\\
        &\geq |y-x_j|^{n_j+1}\Bigg|\frac{v^{(n_j+1)}(x_j)}{(n_j+1)!}  - M_j\sum_{k=1}^{r-n_j-1} \Big|\frac{(y-x_j)^k}{k!}\Big|\Bigg|\\
        &\geq \frac{c}{2(n_j+1)!}|y-x_j|^{n_j+1} > 0,
    \end{align*}
    for any $y\in \overline{B(x_j,\delta_j)}$. For simplicity, let $x,y\in \overline{B(x_j,\delta_j)}$ and $y>x$. Using Taylor's theorem and convexity of $x\mapsto x^n$ for any $n\in\NN$, we obtain
    \begin{align*}
        v(y) - v(x) &= v(y) - v(x_j) + v(x_j) - v(x)\\
        &= \frac{v^{(n_j+1)}(x_j)}{(n_j+1)!}\Bigg((y-x_j)^r - (x - x_j)^r\Bigg) \\
        &+ \sum_{k = n_j+2}^r \frac{v^{(k)}(x_j)}{k!}\Bigg((y-x_j)^k - (x - x_j)^k\Bigg)\\
        &\geq 2^{-n_j}\frac{v^{(n_j+1)}(x_j)}{(n_j+1)!} (y-x)^{n_j+1} + \sum_{k = n_j+2}^r 2^{1-k}\frac{v^{(k)}(x_j)}{k!}(y-x)^k\\
        &= 2^{-n_j}(y-x)^{n_j+1}\Bigg(\frac{v^{(n_j+1)}(x_j)}{(n_j+1)!} + \sum_{k = n_j+2}^r 2^{1-k+n_j}\frac{v^{(k)}(x_j)}{k!}(y-x)^k \Bigg)\\
        &\geq 2^{-n_j}(y-x)^{n_j+1}\Bigg(\frac{v^{(n_j+1)}(x_j)}{(n_j+1)!}  - M_j\sum_{k=1}^{r-n_j-1} \Big|\frac{(y-x_j)^k}{k!}\Big|\Bigg)\\
        &\geq \frac{c}{2^{n_j}(n_j+1)!}(y-x)^{n_j + 1}
    \end{align*}
    The preceding calculation implies that $v|_{\overline{B(x_j,\delta_j)}}$ is anti $(n_j+1)$-Holder with constant $\frac{c}{2^{n_j}(n_j+1)!}$.
    
    On the other hand, without loss of generality, we may order
    \[x_1 < x_2 < \cdots < x_k.\]
    Define $I_0 = [0, x_1 - \delta]$, $I_i = [x_i+\delta, x_{i+1} - \delta]$ for $i = 1,\dots, k-1$, and $I_k = [x_k+\delta, 1]$. Monotone increasing implies $v'(x)\geq 0$ for all $x\in [0,1)$. As a consequence, if $x\in I_j$ for some $j = 1,\dots, k$, then $v'|_{I_j}(x) > c_j>0$ for some constant $c_j$. Similarly, this leads to $v|_{I_j}$ to be anti $1$-Holder with constant $c_j$. Thus, the potential is piecewise anti Holder and by Example \ref{ex.piece}, one can conclude that $v$ is anti $r$-H\"older.
\end{example}

Unfortunately, the orders of the derivatives that vanish must be even numbers, unless on the boundary. If not, then the function would locally behave like $x^2$ around the vanishing points, which destroys monotonicity. Indeed, assume that there existed a $v\in C^2([0,1))$ and a point $x_0$ not on the boundary such that $v^{(1)}(x_0) = 0$ and $v^{(2)}(x_0) > 0$. Consider $y_0 = x_0 -\delta$ for sufficiently small $\delta >0$, which exists since $x_0$ on not on the boundary. Then by the Fundamental Theorem of Calculus we have
\[- v^{(1)}(y_0) = \int_{y_0}^{x_0} v^{(2)}(x) dx > c \delta,\]
which directly implies that $v^{(1)}(y) < 0$. Thus, leading to a contradiction to the assumption that $v$ is monotone increasing. 

\begin{example}[Analytic potentials.]\label{ex.analytic}
    Let $v\in C^\omega(\RR/\ZZ, \RR/\ZZ)$ be strictly increasing, which can be lifted to a real analytic function and then restricted to $[0,1)$, so we denote this as $\Tilde{v}\in C^\omega([0,1),\RR)$. Moreover, $\Tilde{v}$ has finitely many critical points and each critical points has finite order. If each of the critical points vanish up to an even order, then with a direct application of the argument in Example \ref{ex.Cr} one can conclude $\Tilde{v}$ is anti $\alpha$-Hölder for some $\alpha\in [1,\infty)$.
\end{example}

The examples can be further generalized to higher dimension, as one can replace the derivative with a directional derivative in the unstable direction. To conclude the section, we discuss four more examples of potentials falling under Theorem \ref{t.PLEhd}. The first two are $C^1$ functions that can be defined in any dimension. The third is uniformly anti $\alpha$-H\"older along local unstable leaves, but not anti $\alpha$-H\"older. The final example shows that anti $\alpha$-H\"older is a larger class of potentials than anti Lipschitz and provides a simple example of potentials with critical points. 

\begin{example}\label{ex.exp}
     Consider any positive integer $d\geq 1$, $M\in G(d,\ZZ)$ with unstable unit eigenvector $u\in \RR^d$ and an $a\in \RR^d$ such that $\sum_{i=1}^d a_iv_i > 0$. Define
     \begin{align*}
         v: [0,1)^d &\rightarrow \RR\\
         (x_1,\dots, x_d) &\mapsto \exp\Big(\sum_{i=1}^d a_ix_i\Big).
     \end{align*}
     The directional derivative of $v$ in direction $u\in\RR^d$ is bounded below by $\sum_{i=1}^d a_iv_i$. The local unstable leaves would be the the image of the lines $f_z(t) = tv + z$ restricted to $[0,1)^d$ where $z\in [0,1)^d$, more compactly Image$(f_z)$. Consider any $x,y \in \RR^d$ such that $x,y\in$ Image$(f_z)$. By the Mean Value Theorem it follows that
     \[|v(x) - v(y)|\geq \sum_{i=1}^d a_iv_i ||x-y||.\]
     Hence, $v$ is uniformly anti $1$-H\"older along local unstable leaves.
\end{example}

\begin{example}
     Consider any positive integer $d\geq 1$, $M\in G(d,\ZZ)$ with unstable unit eigenvector $u\in \RR^d$ and an $a\in \RR^d$ such that $\sum_{i=1}^d a_iv_i > 0$. Define
     \begin{align*}
         v: [0,1)^d &\rightarrow \RR\\
         (x_1,\dots, x_n) &\mapsto \log(1+\sum_{i=1}^d a_ix_i)
     \end{align*}
     The directional derivative of $v$ in direction $u\in\RR^d$ is bounded below by \[\frac{\sum_{i=1}^d a_iv_i}{1+\sum_{i=1}^d a_i}.\] 
     Similarly to Example \ref{ex.exp}, the local unstable leaves would be the the graph of the lines $f_z(t) = tv + z$ restricted to $[0,1)^d$ where $z\in [0,1)^d$. Consider any $x,y \in \RR^d$ such that $x,y\in $Image$(f_z)$. By the Mean Value Theorem
     \[|v(x) - v(y)|\geq \frac{\sum_{i=1}^d a_iv_i}{1+\sum_{i=1}^d a_i}||x-y||.\]
     Hence, $v$ is uniformly anti $1$-H\"older along local unstable leaves.
\end{example}

\begin{example}[Uniformly anti $\alpha$-H\"older only on local unstable leaves]\label{ex.3}
    For simplicity, consider the matrix 
    \[M = \begin{pmatrix}
        2 & 0\\
        0 & 1
    \end{pmatrix}\]
    acting on $\TT^2$. The local unstable leaves are the graphs of the function $y = z$ for a constant $z\in [0,1)$. Define the potential
    \begin{align*}
        v:[0,1)^2 &\rightarrow \RR\\
        (x,y) &\mapsto (y-\frac{1}{2})^2 + x.
    \end{align*}
    Now consider any $(x, y), (\Tilde{x},y)\in [0,1)^2$ on the same local unstable leaf. It is immediately clear that
    \[|v(x,y) - v(\Tilde{x},y)| = |x - y| = d(x,y).\]
    On the other hand, consider $(\frac{1}{2}, \frac{1}{4})$ and $(\frac{1}{2}, \frac{3}{4})$, then it follows
    \[\Bigg|v\Big(\frac{1}{2}, \frac{1}{4}\Big) - v\Big(\frac{1}{2}, \frac{3}{4}\Big)\Bigg| = 0 < \Bigg|\Bigg|\Big(\frac{1}{2}, \frac{1}{4}\Big) - \Big(\frac{1}{2}, \frac{3}{4}\Big)\Bigg|\Bigg|.\]
    Implying that $v$ does not satisfy (\ref{d.lowerholder}).
\end{example}

\begin{example}[Anti $\alpha$-H\"older, but not anti Lipschitz] \label{ex.monomial}
    Consider any any positive integer $n\geq 1$ and define
    \begin{align*}
        v: [0,1) &\rightarrow \RR\\
        x &\mapsto x^n.
    \end{align*}
    Note that $v$ is continuous, bounded, and monotone increasing. To see that $v$ is not anti Lipschitz, assume for a contradiction that there exists a constant $L>0$ that satisfies the anti Lipschitz condition. The condition implies $L\leq \frac{1}{j^{n-1}}$ for all any positive integers $j\geq 1$, which is a contradiction. On the other hand, $v$ is anti $n$-H\"older, since for $a,b\in (0,1)$,
    \[a^n + b^n \leq (a+b)^n.\]
    Let $x,y\in (0,1)$ where $x>y$. Replacing $a = x-y$ and $b=y$, it follows that
    \[(x-y)^n \leq x^n - y^n.\]
    Hence, $v$ is anti $n$-H\"older.
\end{example}

The rest of the paper is organized as follows. Section \hyperlink{secprelim}{2} will introduce the preliminary definitions needed for all sections and a brief discussion of polar decomposition of Schr\"odinger cocycles. In Section \hyperlink{pfkey}{3}, we will prove Lemma \ref{l.key}. Finally, Sections \hypertarget{SFT}{4} and \hyperlink{OEDT}{5} will prove the results from Application \hyperlink{Application1}{1} and \hyperlink{Application2}{2}, respectively.

\subsection*{Acknowledgments} Thank you to my advisor Zhenghe Zhang for providing this exciting problem, as well as: pointing out areas of essential difficulties, extensive feedback to improve the structure of the paper, and the endless belief and support in me. Thank you to Adeline Chin for helping with the editing and readability of the paper. Thank you to Agnieszka Zelerowicz, Alex Tao, Hamid Naderiyan, Seth Berman, Ethan Nowaski, Kevin Costello, and Clark Butler for allowing me to discuss my research with all of you. Everyone has provided me support and/or guidance in some way and I am extremely grateful to everyone. 

\section{Preliminaries}\hypertarget{secprelim}{}

Recall we consider a compact metric space $(\Omega, T, \mu)$ with a continuous and invertible map $T$ and a measure $\mu$ that is $T$-ergodic. We are interested in studying Schr\"odinger operators acting on $\ell^2(\ZZ)$, as in (\ref{d.sop}). In the case that $T$ is not invertible, we replace $\ZZ$ with $\NN$ and impose Dirichlet boundary condition $\psi_{-1} = 0$.

We are interested in the behavior of $H_{\lambda,v, \omega} \psi = E \psi$ for $E\in \RR$. For each energy $E\in \RR$, define an associated Schr\"odinger cocycle $(T,A^{(E-\lambda v)})$. The cocycle map $A^{(E-\lambda v)}: \Omega \rightarrow \mathrm{SL}(2,\RR)$ is defined as
\begin{equation}\label{d.sc}
    A^{(E-\lambda v)}(\omega) = \begin{pmatrix}
    E - \lambda v(\omega) & -1\\
    1 &0
\end{pmatrix}.
\end{equation}
The iterates of the cocycle map are written as
\[A^{(E-\lambda v)}_n(\omega) = \begin{cases}
    A^{(E-\lambda v)}(T^{n-1}\omega) \cdots A^{(E-\lambda v)}(\omega) & n > 0\\
    I_2 & n = 0\\
    A^{(E-\lambda v)}(T^{n}\omega)^{-1} \cdots A^{(E-\lambda v)}(T^{-1}\omega)^{-1} & n< 0
\end{cases}\]
and have a direct relation to solutions of the eigenvalue equation, since $\psi$ is a solution, not necessarily in $\ell^2(\ZZ)$, if and only if 
\[\begin{pmatrix}
    \psi_{n}\\
    \psi_{n-1}
\end{pmatrix} = A^{(E-\lambda v)}_n \begin{pmatrix}
    \psi_0\\
    \psi_{-1}
\end{pmatrix} \text{ for all } n\in \ZZ.\]

The Lyapunov exponent is defined as
\begin{align}
    L(E; \lambda) &= \lim_{n\rightarrow \infty} \frac{1}{n}\int_{\TT^2} \log||A^{(E-\lambda v)}_n(\omega)|| d\mu(\omega)\\
    &= \inf_{n\geq 1} \frac{1}{n}\int_{\TT^2} \log||A^{(E-\lambda v)}_n(\omega)||  \nonumber \\
    &\geq 0 \nonumber
\end{align}
where the existence of the limit follows by subadditivity. Moreover, if $\mu$ is ergodic, then by Kingman's Subadditive Ergodic Theorem on each $E\in \RR$ to obtain
\[L(E;\lambda) = \lim_{n\rightarrow \infty}\frac{1}{n}\log||A^{(E-\lambda v)}_n(\omega)||\text{ for a.e } \omega\in \TT^2.\]
For more information on why the Lyapunov exponent is interesting, general theory, and specific classes on Schr\"odinger operators, we highly recommend, respectively \cite{wilkinson, DJ1, DJ2}. 

Throughout the remainder of the paper let $0<c<1<C$ represent universal constants that are small and large respectively. These constants only depend on the bounded, continuous, monotone, and anti $\alpha$-H\"older potential $v:\Omega \rightarrow \RR$. Without loss of generality, we will always assume that $0\leq v(\omega)\leq 1$ for all $\omega\in \Omega$ except for the proof of Lemma \ref{l.key}. 

Next, we briefly discuss polar decomposition, as it is an essential starting point, see \cite{zhenghe2} for a more detailed account. Let $t = \frac{E}{\lambda}$ and $t\in \I := [-1, 2]$. Define functions $r,g: \Omega\times \I \rightarrow \RR$ as
\begin{align*}
    r(\omega, t) &= t - v(\omega),\\
    g(\omega, t) &= r^2(\omega) + 1.
\end{align*}
Since $v$ is bounded we have
\[c < g(\omega,t) < C \text{ for all } \Omega\times \I.\]
We define a new matrix
\begin{equation}\label{e.coc}
    A(\omega, t, \lambda):=  \begin{pmatrix}
    \lambda \sqrt{g(T\omega, t)} & 0 \\
    0 & \frac{1}{\lambda} \sqrt{\frac{1}{g(T\omega, t)}}
\end{pmatrix}R_{\theta(\omega, t)}
\end{equation}
where we let $\Tilde{\Lambda}(T\omega)$ represent the hyperbolic matrix and
\[R_\gamma = \begin{pmatrix}
    \cos\gamma & -\sin\gamma\\
    \sin\gamma & \cos\gamma
\end{pmatrix}.\]

We define inductively defined functions from $\Omega\rightarrow \RR$ that follow the orbit of $\vec{e}_1\in \RR^2$ under the projective action of the the cocycle $A$. Define
\[\theta(\omega, t) = \cot^{-1}(t-v(\omega))\]
and let $\theta_0 = \theta(\omega,t)$. For every $n\in \NN$, we define
\[\phi_n(\omega, t) = \cot^{-1}(\lambda^2 g(T^n\omega)\cot\theta_{n-1}) \text{ and } \theta_n(\omega, t) = \phi_n(\omega, t) + \theta(T^n\omega).\]
Lastly, we denote the Lyapunov exponent for the cocycle $(T, A(\cdot, t, \lambda))$ as $\Tilde{L}(t;\lambda)$. 

The relationship between the cocycles $A$ and $A^{(E-\lambda v)}$ is not immediately clear. In the construction of $A$, we found that $A^{(E-\lambda v)}$ is a $\log$-integrable $\mathrm{SL}(2, \RR)$ conjugacy away from a matrix $\Lambda\cdot O$ where $\Lambda$ represents a diagonal matrix and $O$ is a rotation matrix both depending on $\lambda$. As $\lambda$ tends to infinity, the conjugated matrix $\Lambda\cdot O$ approaches $A\circ T^{-1}$ in the $C^0$-topology. By proving that $\Tilde{L}(t;\lambda)$ is positive, we may pass positivity and the lower estimates to the cocycle $(T,\Lambda\cdot O)$ by $T$-invariance and the stability of our argument under $C^0$ perturbations. Moreover, the Lyapunov exponent is preserved under $\log$-integrable conjugation, so positivity and lower estimates for $(T,\Lambda\cdot O)$ imply that the same holds for $(T,A^{(E-\lambda v)})$. In the non-invertible case, one instead shows $A^{(E-\lambda v)} \circ T$ is a conjugacy away from $(\Lambda \circ T) \cdot O$ and the rest follows an analogous argument. For additional details on polar decomposition we refer to \cite{zhenghe1} and \cite[Appendix A]{zhenghe2}.

\section{Proof of Key Lemma}\hypertarget{pfkey}{}

In this section, we prove Lemma \ref{l.key}, which will be an essential tool for us to show positivity. The argument is stable in $C^0$-perturbations, since the only dependence is on the shape of $\theta$ and $g$.

\begin{proof}[Proof of Lemma \ref{l.key}]
    Consider any compact set $K\subset \RR$, since $K$ must be closed and bounded there must exist some connected and compact interval containing $K$. Thus, without loss of generality, assume that $K = [-m_1, m_2]$ for some $0\leq m_1, m_2\in \RR$. Define 
    \begin{align*}
        M &= \max\{m_1, m_2\},\\
        G &= M + ||v||_\infty,\\
        \eta &= \frac{1}{2(1+G^2)},
    \end{align*}
    and
    \[\lambda_0 = \eta^{-1}(5||v||_\infty+2M).\]
    Fix any $\lambda\geq \lambda_0$. All estimates provided will be independent of $\lambda$ and $t\in K$. 
    
    Let $n=0$ and $\omega,\omega'\in \Omega$ such that $\omega<\omega'$. By monotonicity of $v$ and $\cot^{-1}$ being a decreasing function, we may conclude
    \[\theta_0(\omega) = \cot^{-1}(t-v(\omega)) < \cot^{-1}(t-v(\omega')) = \theta_0(\omega').\] 
    Thus, $\theta_0$ is monotone on $\Omega$.

    To show (\ref{e.l.monotone}), consider the function $\cot^{-1}: [-G, G] \rightarrow \RR$. By the Mean Value Theorem, for any $x, y\in [-G, G]$ with $x < y$, there is a $z\in (x,y)\subset [-G, G]$ such that
    \begin{equation}\label{e.mvtcot}
        \cot^{-1}(x) - \cot^{-1}(y) = \frac{1}{1+z^2}(y-x)\geq \eta(y-x).
    \end{equation}
    Remark that $t-v(\omega) \in [-G, G]$ for all $\omega\in \Omega$. Thus, we may replace $x,y$ with $t - v(\omega'),t-v(\omega)$, so
    \[\theta_0(\omega') - \theta_0(\omega) = \cot^{-1}(t-v(\omega')) - \cot^{-1}(t-v(\omega))\geq \eta(v(\omega')-v(\omega)).\]

    For induction, assume for some positive integer $n\in \NN$ that $\theta_{n}$ is monotone on any $S'\in \P_{n}$ and satisfies (\ref{e.l.monotone}). Consider any $S\in \P_{n+1}$ and let $\omega,\omega'\in S$ such that $\omega < \omega'$. By definition of $\P_{n+1}$ being a refinement of $\P_{n}$, there exists some $S'\in \P_{n}$ such that $S\subset S'$. By inductive hypothesis, $\theta_{n}$ must be monotone on $S'$ and hence $S$ too. Monotonicity of $v$, $T^{n+1}|_S$ injective, and $\cot^{-1}$ decreasing, imply that $\theta\circ T^{n+1}|_S$ is monotone. 
    
    Let $\alpha\in \NN$ such that $\theta_{n}(\omega')\in \Big(\alpha\pi, (\alpha+1)\pi \Big)$, then $\phi_{n+1}(\omega')\in \Big(\alpha\pi, (\alpha+1)\pi \Big)$. Similarly, if $\theta_{n}(\omega) \in \Big(k\pi, (k+1)\pi\Big)$ for some $k<\alpha$, then $\phi_{n+1}(\omega) < \alpha\pi$. The preceding line implies $\phi_{n+1}(\omega) < \phi_{n+1}(\omega')$ and hence $\theta_{n+1}(\omega) < \theta_{n+1}(\omega')$. The estimate follows by (\ref{e.mvtcot}).
    \begin{align*}
        \theta_{n}(\omega') - \theta_n(\omega)&\geq \theta_0(T^n\omega') - \theta_0(T^n\omega)\\
        &\geq \eta(v(T^n\omega') - v(T^n\omega))
    \end{align*}
    It remains to handle when
    \begin{equation}\label{e.samecomp}
        \theta_{n}(\omega), \theta_{n}(\omega')\in \Big(\alpha\pi, (\alpha+1)\pi \Big).
    \end{equation}
    If $\phi_{n+1}(\omega) < \phi_{n+1}(\omega')$, then the argument above completes this case. Thus, we assume
    \begin{equation}\label{e.xgreaty}
        \phi_{n+1}(\omega) > \phi_{n+1}(\omega').
    \end{equation}
    
    Observe that (\ref{e.samecomp}) implies $\cot\theta_{n+1}(\omega)$ and $\cot\theta_{n+1}(\omega')$ lie in the same $\pi \ZZ$ interval. The function $\cot^{-1}$ is Lipschitz, so
    \[0 < |\phi_{n+1}(\omega) - \phi_{n+1}(\omega')| \leq \lambda^2g(T^{n+1}\omega')\cot\theta_{n}(\omega') - \lambda^2g(T^{n+1}\omega)\cot\theta_{n}(\omega)\]
    By injectivity, we have $\theta(T^{n+1}S) \subset \theta(\Omega)$ and by (\ref{e.mvtcot}) we obtain
    \[\theta(T^{n+1}\omega') - \theta(T^{n+1}\omega)\geq 2\eta\Big(v(T^{n+1}\omega') - v(T^{n+1}\omega)\Big)\]
    for all $\omega,\omega'\in S$.

    On the other hand, assumption (\ref{e.xgreaty}) implies 
    \begin{equation}\label{e.gxlessgy}
        g(T^{n+1}\omega)\cot\theta_{n}(\omega) < g(T^{n+1}\omega')\cot\theta_{n}(\omega').
    \end{equation}
    Consider two cases:
    \begin{enumerate}
        \item $g(T^{n+1}\omega) < g(T^{n+1}\omega')$
        \item $g(T^{n+1}\omega) > g(T^{n+1}\omega')$
    \end{enumerate}
    These cases determine the sign of $\cot\theta_{n}(\omega)$ and $\cot\theta_{n}(\omega)$. In case 1,
    \begin{align*}
        0 &< g(T^{n+1}\omega')\cot(\theta_{n}(\omega')) - g(T^{n+1}\omega)\cot(\theta_{n}(\omega))\\
        &= \underbrace{g(T^{n+1}\omega')}_{\geq 1}\Big(\underbrace{\cot(\theta_{n}(\omega')) - \cot(\theta_{n}(\omega))}_{<0}\Big) + \cot(\theta_{n}(\omega))\Big(\underbrace{g(T^{n+1}\omega') - g(T^{n+1}\omega)}_{>0}\Big)
    \end{align*}
    which implies $\cot(\theta_{n}(\omega)) > 0$ and $\cot(\theta_n(\omega'))>0$. In case 2,
    \begin{align*}
        0 &> g(T^{n+1}\omega)\cot(\theta_{n}(\omega)) - g(\omega')\cot(\theta_{n}(\omega'))\\
        &= \underbrace{g(T^{n+1}\omega)}_{\geq 1}\Big(\underbrace{\cot(\theta_{n}(\omega)) - \cot(\theta_{n}(\omega'))}_{>0}\Big) + \cot(\theta_{n}(\omega')) \Big(\underbrace{g(T^{n+1}\omega) - g(T^{n+1}\omega')}_{>0}\Big)
    \end{align*}
    which implies that $\cot\theta_{n}(\omega') < 0$ and $\cot(\theta_n(\omega))>0$. 
    
    Consider the following subcases of the first case:
    \begin{enumerate}
        \item[(a)] $g(T^{n+1}\omega')\cot\theta_{n}(\omega') \geq \frac{1}{\lambda}$
        \item[(b)] $0 < g(T^{n+1}\omega')\cot\theta_{n}(\omega') < \frac{1}{\lambda}$
    \end{enumerate}
    \textit{Subcase a:} Define the function 
    \begin{align*}
        f(x):(0, \infty) &\rightarrow \RR\\
        x &\mapsto \cot^{-1}(x) - \frac{1}{x}
    \end{align*}
    and notice that $f$ is an increasing function. If $x<y$, one has 
    \begin{equation}\label{e.f}
        \cot^{-1}(x) - \cot^{-1}(y) < \frac{1}{x} - \frac{1}{y}.
    \end{equation}
    Replacing the $x,y$ in (\ref{e.f}) with $\lambda^2g(T^{n+1}\omega)\cot(\theta_{n}(\omega)), \lambda^2 g(T^{n+1}\omega')\cot(\theta_{n}(\omega'))$ and using the assumptions of our subcase we find
    \begin{align*}
        0 &< |\phi_{n+1}(\omega) - \phi_{n+1}(\omega')| \\
        &= |\cot^{-1}(\lambda^2g(T^{n+1}\omega)\cot(\theta_{n}(\omega))) -\cot^{-1}(\lambda^2g(T^{n+1}\omega')\cot(\theta_{n}(\omega'))|\\
        &< \Bigg|\frac{1}{\lambda^2g(T^{n+1}\omega)\cot(\theta_{n}(\omega))} - \frac{1}{\lambda^2g(T^{n+1}\omega')\cot(\theta_{n}(\omega'))}\Bigg|\\
        &= \Bigg|\frac{\lambda^2g(T^{n+1}\omega')\cot(\theta_{n}(\omega')) - \lambda^2g(T^{n+1}\omega)\cot(\theta_{n}(\omega))}{\lambda^4g(T^{n+1}\omega)g(T^{n+1}\omega')\cot\theta_{n}(\omega)\cot\theta_{n}(\omega')}\Bigg|\\
        &<\Bigg|\frac{\lambda^2g(T^{n+1}\omega')\cot(\theta_{n}(\omega)) - \lambda^2g(T^{n+1}\omega)\cot(\theta_{n}(\omega))}{\lambda^4 \cot\theta_{n}(\omega)g(T^{n+1}\omega')\cot\theta_{n}(\omega')}\Bigg|\\
        &= \Bigg|\frac{\lambda^2\cot(\theta_{n}(\omega))\Big(g(T^{n+1}\omega') - g(T^{n+1}\omega)\Big)}{\lambda^4 \cot\theta_{n}(\omega)g(T^{n+1}\omega')\cot\theta_{n}(\omega')}\Bigg|\\
        &\leq \Bigg|\frac{1}{\lambda} \Big(g(T^{n+1}\omega') - g(T^{n+1}\omega)\Big)\Bigg|\\
        &= \Bigg|\frac{1}{\lambda}\Big(v(T^{n+1}\omega') - v(T^{n+1}\omega)\Big) \Big(v(T^{n+1}\omega') + v(T^{n+1}\omega) - 2t\Big)\Bigg|\\
        &< \frac{5||v||_\infty + 2M}{\lambda}\Big(v(T^{n+1}\omega') - v(T^{n+1}\omega)\Big).
    \end{align*}
    By our choice of $\lambda_0$, it follows that
    \begin{align}
        \theta_{n+1}(\omega') - \theta_{n+1}(\omega) &> \Big(2\eta - \frac{5||v||_\infty + 2M}{\lambda}\Big) \Big(v(T^{n+1}\omega') - v(T^{n+1}\omega)\Big) \label{e.2.ii.a}\\
        &\geq \eta \Big(v(T^{n+1}\omega') - v(T^{n+1}\omega)\Big)\geq 0. \nonumber
    \end{align}
    \textit{Subcase b:} We claim that this subcase cannot arise by contradiction. Consider the function
    \begin{align*}
        f: (0,\pi) &\rightarrow \RR\\
        x &\mapsto \cot(x) + x
    \end{align*}
    which is a decreasing function. If $x<y$, then
    \[\cot(x) - \cot(y) > y - x.\]
    Consider the following computation, which uses the above line, inductive assumption, definition of $\lambda$, and assumptions on $v,g$.
    \begin{align*}
        0 &>\phi_{n+1}(\omega') - \phi_{n+1}(\omega)\\
        &\geq \lambda^2\Big(g(T^{n+1}\omega) \cot\theta_{n}(\omega) - g(T^{n+1}\omega') \cot\theta_{n}(\omega') \Big)\\
        &= \lambda^2 \Big(g(T^{n+1}\omega) (\cot\theta_{n}(\omega) - \cot\theta_{n}(\omega')) + \cot\theta_{n}(\omega') ( g(T^{n+1}\omega) -  g(T^{n+1}\omega') \Big)\\
        &\geq \lambda^2 \Big(g(T^{n+1}\omega) (\theta_{n}(\omega') - \theta_{n}(\omega)) - \cot\theta_{n}(\omega') ( g(T^{n+1}\omega') -  g(T^{n+1}\omega) \Big)\\
        &\geq \lambda^2 \Big(g(T^{n+1}\omega)\eta ( v(T^{n+1}\omega') - v(T^{n+1}\omega) )- \frac{1}{\lambda} ( g(T^{n+1}\omega') -  g(T^{n+1}\omega) \Big)\\
        &= \Bigg(g(T^{n+1}\omega)\lambda^2\eta\Bigg) \\
        &\hspace{8mm}\cdot\Bigg(\Big(v(T^{n+1}\omega') - v(T^{n+1}\omega)\Big) - \lambda\Big(t-v(T^{n+1}\omega')\Big)^2 - \Big(t-v(T^{n+1}\omega)\Big)^2\Bigg)\\
        &\geq \Bigg(\lambda(5||v||_\infty+2M)g(T^{n+1}\omega)\Bigg) \\
        &\hspace{8mm}\cdot \Bigg(\Big(v(T^{n+1}\omega') - v(T^{n+1}\omega)\Big)- \lambda \Big(v(T^{n+1}\omega') - v(T^{n+1}\omega)\Big)\Bigg) \\
        &\hspace{16mm}\cdot \Bigg(v(T^{n+1}\omega') + v(T^{n+1}\omega) - 2t\Bigg)\\
        &= \Bigg(v(T^{n+1}\omega') - v(T^{n+1}\omega)\Bigg)\\
        &\hspace{8mm}\cdot\Bigg(\lambda(5||v||_\infty+2M) g(T^{n+1}\omega) - \lambda (v(T^{n+1}\omega') + v(T^{n+1}\omega) - 2t)\Bigg)\\
        &\geq ( v(T^{n+1}\omega') - v(T^{n+1}\omega) ) \Big(\lambda(5||v||_\infty+2M) g(T^{n+1}\omega) - \lambda (4||v||_\infty+2M) \Big)\\
        &>0
    \end{align*}
    This is a contradiction to the assumption of $\phi_{n+1}(\omega) > \phi_{n+1}(\omega')$, so this case does not arise and we have completed Case $1$.
    
    For the second case, the analogous subcases are:
    \begin{enumerate}
        \item $g(T^{n+1}\omega')\cot\theta_{n}(\omega')\leq -\frac{1}{\lambda}$
        \item $0\geq g(T^{n+1}\omega')\cot\theta_{n}(\omega')> -\frac{1}{\lambda}$.
    \end{enumerate}
    Both follow a similar argument to the proofs provided in the respective subcases of the first case.
\end{proof}

\section{Subshifts of Finite Type}\hypertarget{SFT}{}

\subsection{Preliminaries}\hypertarget{prelim}{} Let $\A := \{0,1, \dots, \ell-1\}$ for some any positive integer $\ell\geq 2$. The full shift space is $\A^{\ZZ}$ and equipped with the left shift map $T$:
\[[T\omega]_n = \omega_{n+1} \text{ for all } n\in \ZZ.\]
We let $\A^\ZZ$ have a topology generated by the metric
\[d(\omega,\omega') = \begin{cases}
    e^{-N(\omega,\omega')} & \omega \neq \omega'\\
    0 & \omega = \omega'
\end{cases}\]
where $N: \A^\ZZ \times \A^\ZZ \rightarrow \NN$ defined as
\begin{equation}\label{e.N}
    N(\omega,\omega') = \max\{n\in \NN: \omega_j = \omega'_j \text{ for all } |j| < n\}.
\end{equation}

Cylinder sets are defined as
\[[n;i_0\dots i_{k-1}] := \{\omega: \omega_{n+l} = i_l \text{ for all } 0\leq l \leq k-1\}\]
where $n\in \ZZ$ and $i_0,\dots, i_k\in \A$. We say cylinder sets of the form above are of length $k$. If $\omega\in \A^\ZZ$ we may write $\underline{\omega}_k\in \A^k$ to represent the first $k$-coordinates, that is, $\underline{\omega}_n$ is equal to $\omega_0\dots \omega_{k-1}$. In summary, cylinder sets beginning at position $n$ with a length $k$ word can be written as $[n;\underline{i}_k]$.

Consider $\Omega \subset \A^\ZZ$ a subshift of finite type. That is, let $\F$ be a set of finitely many words of length $n$ that are forbidden, then $\Omega$ is the set of words that do not contain any forbidden words. Precisely,
\[\Omega:= \{\omega\in \A^\ZZ: \text{for all } j\in \ZZ, \omega_{j}\dots\omega_{j_n-1}\notin \F\}.\]
Every subshift defined in this way can also be encoded as a topological Markov chain. A topological Markov chain is defined as follows: consider a transition matrix $A$ of size $\ell\times \ell$ where entries $a_{i,j} \in \{0,1\}$ for $i,j\in \{1,\dots, \ell\}$. The associated topological Markov chain associated to $A$ is 
\[\Omega_A = \{\omega\in \A^\ZZ: a_{\omega_n,\omega_{n+1}} = 1 \text{ for all } n\in \ZZ\}.\]
It is apparent that $\Omega_A$ is a subshift of finite type with the family of banned words being $\F = \{ij\in \A^2: a_{i,j} = 0\}$, which consist only of words of length 2. Without loss of generality, we will always assume that $\F$, corresponding to $\Omega$, contains words of length 2.

Next, define 
\begin{align*}
    \Omega^+ &= \{(\omega_n)_{n \geq 0}: \omega\in \Omega\},\\
    \Omega^- &= \{(\omega_n)_{n \leq 0}: \omega\in \Omega\},
\end{align*}
which represent the set of positive and negative sequences and can be equipped with shift maps $T_\pm$. Denote cylinder sets in $\Omega^\pm$ as $[n;\underline{i}]^\pm$, respectively. Let $\pi^-$ and $\pi^+$ denote the canonical projections onto $\Omega^-$ and $\Omega^+$, respectively. Also, for each $j\in \A$ we define $\Omega_j = \Omega\cap [0;j]$ and $\Omega^\pm_j = \Omega^\pm \cap [0;j]^\pm$.

For each $\omega^-\in \Omega^-$ and $\omega^+\in \Omega^+$ we define
\begin{align*}
    W^{u}_{loc}(\omega^-) = \{\omega'\in \Omega: \omega'_j = \omega_j^-, \forall j\leq 0\},\\
    W^{s}_{loc}(\omega^+) = \{\omega'\in \Omega: \omega'_j = \omega_j^+, \forall j\geq 0\},
\end{align*}
which are called local unstable and stable sets respectively. Throughout, we will work over the unstable sets, but one can work over stable sets with an appropriate adjustment. We impose dictionary order on $W^u_{loc}(\omega^-)$, which means for $\omega,\omega'\in W^u_{loc}(\omega^-)$,
\[\omega < \omega' \iff \omega_{N(\omega, \omega')} < \omega'_{N(\omega,\omega')}.\]
Equality holds only when $\omega_j = \omega'_j$ for all $j\in \NN$. We collect some results that will be useful in the following section.

\begin{lemma}\label{l.resord}
    For every $n\in \NN$, then $T^n$ is monotone on cylinder sets $[0;\underline{i}]\subset W^u_{loc}(\omega^-)$ where $\underline{i}\in \A^n$ and $\omega^- \in \Omega^-$.
\end{lemma}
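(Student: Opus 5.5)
The plan is to unwind the definitions of the dictionary order and of $N(\omega,\omega')$, and to observe that shifting by $n$ only removes coordinates on which two points of the cylinder already agree. Fix $\omega^-\in\Omega^-$, $n\in\NN$, $\underline{i}=i_0\dots i_{n-1}\in\A^n$, and take $\omega,\omega'\in[0;\underline{i}]\cap W^u_{loc}(\omega^-)$ with $\omega<\omega'$. Both points agree with $\omega^-$ at every index $j\le 0$. Since they lie in the same cylinder of length $n$, they also agree at $j=0,\dots,n-1$. Hence $\omega_j=\omega'_j$ for all $j\le n-1$. The first index $j\ge 0$ where they differ, which is the index the dictionary order actually compares (this is how I read $N(\omega,\omega')$ on a common unstable set), is therefore some $k\ge n$, and $\omega<\omega'$ means $\omega_k<\omega'_k$.

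Next I apply $T^n$. By definition $[T^n\omega]_j=\omega_{j+n}$, so $T^n\omega$ and $T^n\omega'$ agree at every index $j\le -1$, because these correspond to the indices $j+n\le n-1$. In particular $T^n\omega$ and $T^n\omega'$ lie in a common local unstable set $W^u_{loc}\big(\pi^-(T^n\omega)\big)$, whose past is determined by $\omega^-$ together with $\underline{i}$. This puts them in a space where the dictionary order is defined, so comparing them makes sense. Their first differing nonnegative index is $k-n\ge 0$, and at that index $[T^n\omega]_{k-n}=\omega_k<\omega'_k=[T^n\omega']_{k-n}$. This gives $T^n\omega<T^n\omega'$, so $T^n$ is monotone increasing on the cylinder. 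It is also injective there, since equality of the images forces $\omega_j=\omega'_j$ for all $j\ge n$, and agreement on $j\le n-1$ then gives $\omega=\omega'$.

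The main obstacle is bookkeeping rather than mathematics. The paper's $N$ in (\ref{e.N}) is written symmetrically with $|j|<n$, and this must be reconciled with the one-sided comparison used in the order on $W^u_{loc}$. I will state explicitly that on a common unstable set, where all indices $j\le0$ coincide, $N(\omega,\omega')$ effectively records the first nonnegative disagreement index. Then the index shift $N(T^n\omega,T^n\omega')=N(\omega,\omega')-n$ is the whole content of the lemma. I will also note that the images lie in a single local unstable set, so the order is well defined, and that the finite-type constraint plays no role beyond ensuring the images stay in $\Omega$.
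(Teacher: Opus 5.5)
Your argument is correct and is essentially the paper's proof. Points of the cylinder agree at every index $j\le n-1$, so the first disagreement index $k=N(\omega,\omega')\ge n$ becomes $k-n$ after applying $T^n$, while the compared letters $\omega_k<\omega'_k$ are unchanged.

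One intermediate claim is false, though, and should be replaced. The images $T^n\omega$ and $T^n\omega'$ need not lie in a common local unstable set. The reason is that $\pi^-(T^n\omega)$ includes the coordinate $[T^n\omega]_0=\omega_n$, which is not determined by $\omega^-$ and $\underline{i}$. Moreover, $\omega_n\neq\omega'_n$ exactly when $k=n$; for example, in the full shift take $n=1$, $\omega_1=0$, $\omega'_1=1$.

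What is true, and all you need, is that the images agree at every index $j\le -1$. For such pairs the symmetric $N$ in (\ref{e.N}) equals the first nonnegative disagreement index, here $k-n$ (possibly $0$). So the dictionary comparison of $T^n\omega$ and $T^n\omega'$ is unambiguous, and this is what the paper's proof tacitly uses.

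The distinction is not merely cosmetic. When the images fall in different local unstable sets, monotonicity of $v$ on each single $W^u_{loc}$ does not by itself yield Corollary \ref{c.compmonotone}. For that one needs the order, and monotonicity of $v$, on the set of sequences sharing the past $j\le -1$. This holds automatically on $\Omega^+$ in the half-line setting.
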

\begin{proof}
    Let $\omega\in  W^u_{loc}(\omega^-)$ and consider any any positive integer $n\geq 1$. Cylinders $[0;\underline{\omega}_n]\subset W^u_{loc}(\omega^-)$ are sets of the form
    \[[0;\underline{\omega}_n] = \{\omega'\in W^u_{loc}(\omega^-): \omega'_j = \omega_j \text{ for all } j\in \{0,\dots, n-1\}\}\]
    where $\omega_0 = \omega^-_0$. Let $\omega'\in [0;\underline{\omega}_n]$. Then note $N(\omega,\omega')\geq n$ and without loss of generality, assume that $\omega_{N(\omega,\omega')} < \omega'_{N(\omega,\omega')}$. The first element that $T^n\omega, T^n\omega'$ differ is precisely at the $N(\omega,\omega') - n$ position. It follows that $[T^n\omega]_{N(\omega,\omega') - n} < [T^n\omega']_{N(\omega,\omega') - n}$, since $[T^n\omega]_{N(\omega,\omega') - n} = \omega_{N(\omega,\omega')}$ and $[T^n\omega']_{N(\omega,\omega') - n} = \omega'_{N(\omega,\omega')}$. Thus, $T^n$ is monotone on cylinder sets $[0;\underline{i}]\subset W^u_{loc}(\omega^-)$ where $\underline{i}\in \A^n, i_0 = \omega^-_0,$ and $\omega^-\in \Omega$. 
\end{proof}

\begin{corollary}\label{c.compmonotone}
    If $v$ is monotone on $W^u_{loc}(\omega^-)$ and $n\in \NN$, then $v\circ T^n$ is monotone on cylinder sets $[0;\underline{i}]\subset W^u_{loc}(\omega^-)$ where $\underline{i}\in \A^n$ and $\omega^-\in \Omega^-$.
\end{corollary}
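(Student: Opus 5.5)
This follows by composing two monotone maps, using Lemma \ref{l.resord}. The only subtlety is which order $v$ is monotone for: $T^n\omega$ lies in a different local unstable set from $\omega$, not in $W^u_{loc}(\omega^-)$. So the plan has three steps.

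\emph{Step 1: locate the image.} Fix $\omega^-\in\Omega^-$, $n\in\NN$ and a cylinder $[0;\underline{i}]\subset W^u_{loc}(\omega^-)$ with $\underline{i}\in\A^n$. Every $\omega\in[0;\underline{i}]$ has the same coordinates $\omega_j$ for all $j\le n-1$: those with $j\le 0$ are fixed by $\omega^-$, and those with $0\le j\le n-1$ are fixed by $\underline{i}$. Hence for every such $\omega$, the coordinates $[T^n\omega]_j=\omega_{j+n}$ with $j\le 0$ are one fixed sequence $\tilde\omega^-\in\Omega^-$. This $\tilde\omega^-$ is formed from $\omega^-$ followed by $i_1\dots i_{n-1}$, suitably reindexed. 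Therefore $T^n([0;\underline{i}])\subset W^u_{loc}(\tilde\omega^-)$.

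\emph{Step 2: monotonicity of $T^n$.} By Lemma \ref{l.resord}, $T^n$ is strictly increasing from $[0;\underline{i}]$, with the dictionary order of $W^u_{loc}(\omega^-)$, into $W^u_{loc}(\tilde\omega^-)$, with its own dictionary order. Concretely, if $\omega<\omega'$ first differ at position $N\ge n$, then $T^n\omega$ and $T^n\omega'$ first differ at position $N-n$, with the same inequality between the entries there.

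\emph{Step 3: compose.} By hypothesis, $v$ is monotone on every local unstable set, in particular on $W^u_{loc}(\tilde\omega^-)$. If $v$ is nondecreasing there, then $\omega<\omega'$ gives $T^n\omega<T^n\omega'$ and so $v(T^n\omega)\le v(T^n\omega')$. If $v$ is nonincreasing there, the inequality reverses. In either case $v\circ T^n$ is monotone on $[0;\underline{i}]$, and it is strict when $v$ is strict, because $T^n$ is injective on the cylinder.

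The main point to handle is the direction of monotonicity. If "monotone on $W^u_{loc}(\omega)$ for every $\omega$" allows the direction to change from leaf to leaf, the conclusion still holds cylinder by cylinder, as argued above. The uniform direction needed later for Lemma \ref{l.key} will be checked separately.
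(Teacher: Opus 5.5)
Your strategy is the one the paper intends: compose the order-preserving map $T^n$ of Lemma \ref{l.resord} with the monotone $v$. The paper states the corollary without proof, as immediate from that lemma. However, your Step 1, which you use to fix the order in which $v$ must be monotone, is off by one index and is false.

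The cylinder $[0;\underline{i}]$ with $\underline{i}\in\A^n$ fixes $\omega_0,\dots,\omega_{n-1}$. So on it, $\omega_j$ is fixed exactly for $j\le n-1$. Hence $[T^n\omega]_j=\omega_{j+n}$ is fixed only for $j\le -1$, while $[T^n\omega]_0=\omega_n$ is free. Your $\tilde\omega^-$ ($\omega^-$ followed by $i_1\dots i_{n-1}$) ends at position $n-1$, so it is the past of $T^{n-1}\omega$, not of $T^n\omega$. Your own Step 2 shows this: if $\omega<\omega'$ first differ at $N=n$, then $T^n\omega$ and $T^n\omega'$ differ at position $0$, so they lie in different local unstable sets. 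Thus $T^n([0;\underline{i}])$ is in general spread over several leaves, one for each admissible successor of $i_{n-1}$. Monotonicity of $v$ on each leaf separately, even in a uniform direction, says nothing about how values on different leaves compare.

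This cannot be patched under the leafwise hypothesis. On the full $2$-shift take $v(\xi)=\frac{1-\xi_0}{2}+\sum_{k\ge1}\xi_k3^{-k}$. It is continuous and increasing on every $W^u_{loc}$, and anti $\ln 3$-H\"older there with $H=\tfrac12$. Now take $n=1$ and the cylinder $[0;\omega^-_0]=W^u_{loc}(\omega^-)$. The points $\omega^{(1)}<\omega^{(2)}<\omega^{(3)}$ with $(\omega_1,\omega_2,\omega_3,\dots)$ equal to $(0,0,0,\dots)$, $(0,1,0,\dots)$, $(1,0,0,\dots)$ give $v\circ T=\tfrac12,\tfrac56,0$, so $v\circ T$ is not monotone.

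Once the index is corrected, what your argument does prove is monotonicity of $v\circ T^n$ on each cylinder $[0;\underline{i}a]$ of length $n+1$, since its image under $T^n$ does lie in a single leaf. For length-$n$ cylinders you need a stronger hypothesis: $v$ monotone for the dictionary order on all points sharing a past $(\xi_j)_{j\le -1}$, with comparisons allowed to start at position $0$. This is the order implicit in the proof of Lemma \ref{l.resord}, where the images are compared at position $N-n\ge 0$. It is also exactly the situation in the half-line setting where the result is actually used. There $v$ is monotone on all of $\Omega^+$ and $T_+^n$ is order preserving from $[0;\underline{i}]^+$ into $\Omega^+$, so the composition argument goes through with no leaf bookkeeping.
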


Next, we equip $\Omega$ with a measure $\mu$ satisfying the bounded distortion property. That is, there exists a $\gamma\geq 1$ such that for all cylinders $[n;j_0\dots j_k], [l; i_0\dots i_m] \subset \Omega$ where $l> n+k$ and $[n;j_0\dots j_k] \cap [l; i_0\dots i_m] \neq \varnothing$, we have
\[\gamma^{-1} \leq \frac{\mu([n;j_0\dots j_k]\cap [l; i_0\dots i_m])}{\mu([n;j_0\dots j_k])\cdot  \mu([l; i_0\dots i_m])} \leq \gamma.\]
Using the projections maps we can define measures $\mu^\pm = \pi^\pm_*\mu$ on $\Omega^\pm$, respectively. Additionally, for each $j\in \A$, we can define $\Omega_j = [0;j]$, $\mu_j = \mu|_{[0;j]},$ and $ \mu^\pm_j = \mu^\pm|_{[0;j]^\pm}$. We say the measure $\mu^+$ satisfies bounded distortion if there exists a $\gamma\geq 1$ such that for all $l,n,k\in \NN$ satisfying $l> n+k$ and $[n;j_0\dots j_k]^+ \cap [l; i_0\dots i_m]^+ \neq \varnothing$, we have
\[\gamma^{-1} \leq \frac{\mu^+([n;j_0\dots j_k]^+\cap [l; i_0\dots i_m]^+)}{\mu^+([n;j_0\dots j_k]^+)\cdot  \mu^+([l; i_0\dots i_m]^+)} \leq \gamma.\]
One can define bounded distortion for $\mu^-$ in a synonymous manner. 

To conclude this section, we collect some properties that arise from a measure $\mu$ with bounded distortion property. The first of which implies that we may always assume, without loss of generality, that $\mu^+$ has bounded distortion property.

\begin{lemma}\label{l.mupmbdp}
    A measure $\mu$ has bounded distortion if and only if $\mu^+$ or $\mu^-$ has bounded distortion.
\end{lemma}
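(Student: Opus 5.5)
The plan is to prove each direction by relating cylinder sets in $\Omega$ to cylinder sets in $\Omega^\pm$ through the projections $\pi^\pm$, and then using shift-invariance of $\mu$ to move cylinders between the positive and negative sides. The basic identity is this: for a cylinder $[n;\underline{j}]^+$ with $n\geq 0$, we have $(\pi^+)^{-1}([n;\underline{j}]^+) = [n;\underline{j}]\cap\Omega$. Hence $\mu^+([n;\underline{j}]^+) = \mu([n;\underline{j}])$. Intersections of cylinders behave the same way, because preimages commute with intersections. Consequently the bounded distortion ratio for $\mu^+$, evaluated on a pair of positive cylinders, is exactly the ratio for $\mu$ on the corresponding pair of cylinders in $\Omega$. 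The same holds for $\mu^-$ with $n\leq 0$.

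\textbf{Forward direction.} If $\mu$ has bounded distortion with constant $\gamma$, then the identity above immediately gives bounded distortion for $\mu^+$, and likewise for $\mu^-$, with the same $\gamma$. The conditions $l>n+k$ and nonempty intersection transfer verbatim.

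\textbf{Converse.} Suppose $\mu^+$ has bounded distortion; the case of $\mu^-$ is symmetric. Take cylinders $[n;j_0\dots j_k]$ and $[l;i_0\dots i_m]$ in $\Omega$ with $l>n+k$ and nonempty intersection. Because $\mu$ is $T$-invariant, applying $T^{-p}$ for a large $p$ maps these to $[n+p;\underline{j}]$ and $[l+p;\underline{i}]$. This preserves each of the three measures involved: the two cylinders and their intersection. Choose $p$ with $n+p\geq 0$. Then all of the sets involved are preimages under $\pi^+$ of positive cylinders, and the ratio equals the corresponding $\mu^+$ ratio, which lies in $[\gamma^{-1},\gamma]$. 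For $\mu^-$, I would instead shift so that $l+m+p\leq 0$ and use $\pi^-$.

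\textbf{Main obstacle.} I expect the only real subtlety to be bookkeeping. First, one must check that $\mu^\pm$ is defined so that the cylinder-preimage identity holds on $\Omega$ itself, and not merely on $\A^\ZZ$. Second, the converse as written uses $T$-invariance of $\mu$, which is part of the standing assumptions on $(\Omega,T,\mu)$. I would state that dependence explicitly. If invariance were unavailable, I would expect the converse to fail, so I would not try to avoid it.
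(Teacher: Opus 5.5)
Your proposal is correct and follows the paper's route. For the forward direction, pulling back along $\pi^+$ gives $\mu^+([n;\underline{j}]^+)=\mu([n;\underline{j}])$, and likewise for intersections. For the converse, $T$-invariance of $\mu$ shifts the cylinders into nonnegative coordinates, and the same identity is applied in reverse; the only difference is that you also spell out the $\mu^-$ case (shifting so that $l+m+p\leq 0$), which the paper leaves implicit.
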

\begin{proof}
    For the forward direction, it suffices to show that $\mu^+$ has bounded distortion property. Let $l,n,k\in \NN$ satisfy $l> n+k$ and $[n;j_0\dots j_k]^+ \cap [l; i_0\dots i_m]^+ \neq \varnothing$. Notice
    \begin{align*}
        \mu^+([n;j_0\dots j_k]^+) &= \mu((\pi^+)^{-1}[n;j_0\dots j_k]^+)\\
        &= \mu\Bigg(\Big\{\omega\in \Omega: \omega_{n+i} = j_i \text{ for all } i\in \{0,\dots, k\}\Big\} \Bigg)\\
        &= \mu([n;j_0\dots j_k])
    \end{align*}
    and by the same argument we find:
    \begin{align*}
        \mu^+([l; i_0\dots i_m]^+) &= \mu([l; i_0\dots i_m]),\\
        \mu^+([n;j_0\dots j_k]^+\cap [l; i_0\dots i_m]^+) &= \mu([n;j_0\dots j_k]\cap [l; i_0\dots i_m]).
    \end{align*}
    Putting everything together,
    \[\frac{\mu^+([n;j_0\dots j_k]^+\cap [l; i_0\dots i_m]^+)}{\mu^+([n;j_0\dots j_k]^+)\mu^+([l; i_0\dots i_m]^+)} = \frac{\mu([n;j_0\dots j_k]\cap [l; i_0\dots i_m])}{\mu([n;j_0\dots j_k])\mu([l; i_0\dots i_m])}\]
    and by bounded distortion property of $\mu$ we conclude the following:
    \[\gamma^{-1}\leq \frac{\mu^+([n;j_0\dots j_k]^+\cap [l; i_0\dots i_m]^+)}{\mu^+([n;j_0\dots j_k]^+)\mu^+([l; i_0\dots i_m]^+)}\leq \gamma.\]

    For the reverse direction, let $\mu^+$ have bounded distortion property. Let 
    \[[n;j_0\dots j_k], [l; i_0\dots i_m] \subset \Omega\] where $l> n+k$ and $[n;j_0\dots j_k] \cap [l; i_0\dots i_m] \neq \varnothing$. By $T$-invariance of $\mu$ we can assume that $n\in \NN$. By the argument above but in reverse, we can conclude $\mu$ has bounded distortion.
\end{proof}

Another outcome of a measure with bounded distortion property is that all such measures have local product structure. That is, there exists a measurable function $\psi: \Omega \rightarrow \RR_+$ such that $\psi\in L^1(\Omega_j, \mu_j^-\times\mu^+_j)$ and
\[d\mu_j = \psi d(\mu_j^-\times\mu^+_j)\]
for each $j\in \A$. Moreover, the measurable function $\psi$ is positive and bounded $\mu$ almost everywhere.

\begin{lemma}\label{l.BDLP_LRNbound}
    If $\mu$ has bounded distortion property, then $\mu^+_j$ is equivalent to $\mu^-_j\times\mu^+_j$ for each $j\in \A$. Consequently, $\mu$ has local product structure and there exists a $\kappa\geq 1$ such that
    \[\kappa^{-1}\leq \psi \leq \kappa\]
    $\mu$ almost everywhere.
\end{lemma}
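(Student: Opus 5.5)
The idea is to compare $\mu_j$ with $\mu^-_j\times\mu^+_j$ on the generating $\pi$-system of \emph{two-sided cylinders} inside $\Omega_j$, and then pass to all Borel sets. For $m,k\ge 0$ we look at sets
\[
C=[-m;\,i_{-m}\dots i_{-1}\,j]\cap[0;\,j\,i_1\dots i_k]\subset\Omega_j .
\]
Because $\F$ consists only of words of length $2$, $\Omega$ is a topological Markov chain. Hence any admissible past ending in $j$ can be glued to any admissible future starting at $j$. This gives
\[
(\mu^-_j\times\mu^+_j)(C)=\mu^-\big([-m;i_{-m}\dots j]^-\big)\,\mu^+\big([0;j\,i_1\dots i_k]^+\big).
\]
Arguing as in the proof of Lemma \ref{l.mupmbdp}, the two factors equal $\mu([-m;i_{-m}\dots j])$ and $\mu([0;j i_1\dots i_k])$.

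\textbf{Step 1: two-sided bounded distortion.} The main task is to show that $\mu(C)$ is comparable to the product of these two cylinder measures. The two cylinders overlap at coordinate $0$, but the bounded distortion hypothesis requires separated cylinders with $l>n+k$. To fix this, shift the future cylinder by one: write $[0;j i_1\dots i_k]=[0;j]\cap[1;i_1\dots i_k]$, so that $C=[-m;i_{-m}\dots j]\cap[1;i_1\dots i_k]$. Bounded distortion, with $n=-m$ and $l=1>0$, then gives
\[
\mu(C)\asymp_\gamma \mu([-m;\dots j])\,\mu([1;i_1\dots i_k]).
\]
Applying bounded distortion once more to $[0;j]\cap[1;i_1\dots i_k]$ gives
\[
\mu([0;j i_1\dots i_k])\asymp_\gamma \mu([0;j])\,\mu([1;i_1\dots i_k]).
\]
Dividing the two comparisons yields
\[
\gamma^{-2}\mu_j(\Omega_j)\le \frac{\mu(C)}{(\mu^-_j\times\mu^+_j)(C)}\cdot 1 \le \gamma^{2}\mu_j(\Omega_j),
\]
up to the constant $\mu([0;j])>0$. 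This constant is positive because $\mu$ is fully supported. Since $\A$ is finite, we may take
\[
\kappa=\gamma^2\max_j\max\{\mu([0;j]),\mu([0;j])^{-1}\}.
\]
The $T$-invariance of $\mu$ is used, as in Lemma \ref{l.mupmbdp}, to move cylinders to nonnegative positions if the hypothesis is only stated there.

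\textbf{Step 2: extension to all Borel sets.} We now know $\kappa^{-1}\nu\le\mu_j\le\kappa\nu$ on two-sided cylinders, where $\nu=\mu^-_j\times\mu^+_j$. Every open subset of $\Omega_j$ is a countable disjoint union of such cylinders, so the inequality holds on open sets. Both measures are finite Borel measures on a compact metric space and hence outer regular, so the inequality extends to all Borel sets. (Alternatively, note that the class of sets where the two-sided bound holds is a monotone class containing the algebra generated by the cylinders.) Consequently $\mu_j$ and $\nu$ are mutually absolutely continuous. By Radon–Nikodym, $\psi=d\mu_j/d\nu$ exists, is in $L^1(\nu)$, and satisfies $\kappa^{-1}\le\psi\le\kappa$ $\nu$-a.e., hence $\mu$-a.e. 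Gluing the $\psi$ defined on each $\Omega_j$ gives the local product structure.

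\textbf{Main obstacle.} The delicate point is Step 1: bounded distortion is stated only for separated cylinders, while the product structure needs cylinders sharing coordinate $0$. The shift-by-one trick above resolves this, at the cost of an extra $\gamma$ and the constant $\mu([0;j])$. A secondary point is checking that the Markov (length-$2$) assumption makes $\pi^-\times\pi^+$ a bijection from $\Omega_j$ onto $\Omega^-_j\times\Omega^+_j$. This bijection is what makes $\nu$ a measure on $\Omega_j$ and gives the product formula for $\nu(C)$.
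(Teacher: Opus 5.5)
Your proposal is correct and is essentially the paper's argument. You compare $\mu_j$ with $\mu^-_j\times\mu^+_j$ on two-sided cylinders through coordinate $0$ via the splitting $[0;j\,i_1\dots i_k]=[0;j]\cap[1;i_1\dots i_k]$, then pass to Borel sets and take the Radon--Nikodym derivative. The paper uses exactly this splitting for the bound $\mu_j\le\kappa\,(\mu^-_j\times\mu^+_j)$ with $\kappa=\gamma^2/\min_{a\in\A}\mu([0;a])$; for the reverse bound it instead drops $j$ from the past cylinder, which gives the constant $\gamma$. Your outer-regularity extension to Borel sets is cleaner than the paper's approximation step.

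There is one slip in Step 1. The ratio $\mu(C)/(\mu^-_j\times\mu^+_j)(C)$ lies between $\gamma^{-2}\mu([0;j])^{-1}$ and $\gamma^{2}\mu([0;j])^{-1}$; you multiplied by $\mu_j(\Omega_j)$ where you should have divided. Your choice of $\kappa$ already absorbs this, so the conclusion is unaffected.
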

\begin{proof}
    Consider any cylinder $[-n; j_{-n}\dots j_{-1}j_0j_1\dots j_m]\subset \Omega$ where $n,m\in \NN$. In order, we will use the definition of product measure, definition of $\mu_j$, subadditivity, and bounded distortion.
    \begin{align}
        \mu_{j_0}^-\times\mu_{j_0}^+&([-n; j_{-n}\dots j_{-1}j_0j_1\dots j_m]) \label{e.test1}\\
        &= \mu_{j_0}^-([-n; j_{-n}\dots j_{-1}j_0]^-) \mu_{j_0}^+([0; j_{0}\dots j_{m-1}j_m]^+) \nonumber\\
        &= \mu_{j_0}([-n; j_{-n}\dots j_{-1}j_0]) \mu_{j_0}([0; j_{0}\dots j_{m-1}j_m])\nonumber\\
        &\leq \mu_{j_0}([-n; j_{-n}\dots j_{-1}]) \mu_{j_0}([0; j_{0}\dots j_{m-1}j_m])\nonumber\\
        &\leq \gamma \mu_{j_0}([-n; j_{-n}\dots j_{-1}j_{0}\dots j_{m-1}j_m])\nonumber
    \end{align}
    The approximation holds for Borel sets, as every borel set can be approximated with cylinder sets. Let $E\subset [-n; j_{-n}\dots j_{-1}j_{0}\dots j_{m-1}j_m]$ such that
    \[\mu_{j_0}([-n; j_{-n}\dots j_{-1}j_{0}\dots j_{m-1}j_m])\leq \mu_{j_0}(E) + \epsilon.\]
    Combined with the previous argument, we find
    \begin{align}
        \mu_{j_0}^-\times\mu_{j_0}^+(E) &\leq \mu_{j_0}^-\times\mu_{j_0}^+([-n; j_{-n}\dots j_{-1}j_{0}\dots j_{m-1}j_m]) \nonumber\\
        &\leq \gamma\mu_{j_0}([-n; j_{-n}\dots j_{-1}j_{0}\dots j_{m-1}j_m]) \nonumber\\
        &\leq \gamma\mu_{j_0}(E) + C\epsilon. \nonumber
    \end{align}
    Letting $\epsilon \rightarrow 0$, we conclude that $\mu_{j_0}^-\times\mu_{j_0}^+ << \mu_{j_0}$. 

    To show $\mu_{j_0} << \mu_{j_0}^-\times\mu_{j_0}^+$, consider
    \begin{align}
        \mu_{j_0}([-n;& j_{-n}\dots j_{-1}j_0j_1\dots j_m]) \label{e.test2}\\
        &\leq \gamma \mu_{j_0}^-([-n; j_{-n}\dots j_{-1}j_0]^-)\mu^+_{j_0}([1;j_1\dots j_m]^+) \nonumber\\
        &\leq \frac{\gamma^2}{\mu^+_{j_0}([0;j_0]^+)}\mu_{j_0}^-([-n; j_{-n}\dots j_{-1}j_0]^-)\mu^+_{j_0}([0;j_0j_1\dots j_m]^+) \nonumber\\
        &\leq \kappa \mu_{j_0}^-\times\mu_{j_0}^+([-n; j_{-n}\dots j_{-1}j_0j_1\dots j_m])\nonumber
    \end{align}
    where $\kappa = \max\{\frac{\gamma^2}{\mu^+_{j_0}([0;j])}: j\in \A\}$. Thus, the measures $\mu_{j_0}$ are equivalent $\mu_{j_0}^-\times\mu_{j_0}^+$.
    
    The equivalence holds for all $j_0\in \A$, so let $\psi_{j_0}: \Omega_{j_0} \rightarrow \RR$ be the Lebesgue-Radon-Nikodym derivative when $\mu_{j_0}^-\times\mu_{j_0}^+ << \mu_{j_0}$, which is in $L^1(\Omega_{j_0})$. Define $\psi = \sum_{j\in \A} \psi_j$, which is $L^1(\Omega_{j})$ for each $j\in \A$. Moreover, 
    \[d\mu_j = \psi d(\mu^-_j\times \mu^+_j),\]
    which holds for all $j\in \A$.
    
    The equivalence of measures implies that the Lebesgue-Radon-Nikodym derivative for $\mu_{j_0} << \mu_{j_0}^-\times\mu_{j_0}^+$ must be $\frac{1}{\psi_{j_0}}$. Hence, $\psi > 0$ for $\mu$ almost everywhere. By (\ref{e.test1}) and (\ref{e.test2})
    \begin{equation}\label{e.measequivbound}
        \kappa^{-1}\mu_{j_0}^-\times\mu_{j_0}^+(E)\leq \mu_{j_0}(E)\leq \kappa\mu_{j_0}^-\times\mu_{j_0}^+(E),
    \end{equation}
    which implies
    \[\kappa^{-1}\leq \psi \leq \kappa\]
    $\mu$ almost everywhere. Indeed, if $\psi > \kappa$ for a set $E\subset \Omega_j$ of positive measure for some $j\in \A$, then by local product structure,
    \[\mu_j(E) = \int_E \psi d(\mu^-_j\times \mu^+_j) > \kappa (\mu^-_j\times \mu^+_j)(E),\]
    which is a contradiction to (\ref{e.measequivbound}). The lower bound follows an analogous argument.
\end{proof}

\subsection{Half-Line}

For this section, we work in the half-line setting in which we slightly abuse notation and write $\Omega^+$ as the half-line, $T_+$ as the half-line left shift map, and $d$ as a metric on $\Omega^+$. One can similarly define order on $\Omega^+$ and have the same results as Lemma \ref{l.resord} and Corollary \ref{c.compmonotone}. Recall that we assume that $v$ satisfies $0\leq v(\omega)\leq 1$ for all $\omega\in \Omega^+$ and $\I = [-1,2]$. Also, from Section \hyperlink{secprelim}{2}, it suffices to show positivity on the cocycle admitted from polar decomposition to prove Theorem \ref{t.halfPLE}. Thus, we show the following theorem.

\begin{theorem}\label{t.halfPLEpd}
    For the same assumptions as in Theorem \ref{t.halfPLE}, there exists a $C_1 = C_1(v)>0$ such that for any $\lambda> 0$, we have
    \[\Tilde{L}(t;\lambda) > \log\lambda - C_1 \text{ for all } t\in \I.\]
\end{theorem}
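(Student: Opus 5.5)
We follow Zhang's scheme (i)--(v), with Lemma \ref{l.key} standing in for the derivative estimate of step (ii). For small $\lambda$ there is nothing to prove, because $\Tilde L\ge 0$ and we can make $C_1$ large. So fix $\lambda\ge\lambda_0$, with $\lambda_0$ given by Lemma \ref{l.key} for $K=\I$. We take $\P_n$ to be the partition of $\Omega^+$ into admissible cylinders $[0;\underline{i}_n]^+$ of length $n$. These refine one another, and by the half-line version of Lemma \ref{l.resord} $T_+^n$ is injective and order preserving on each of them. Hence $T_+$ satisfies the hyperbolic injective condition. Lemma \ref{l.key} then tells us that $\theta_n$ is monotone on every $S\in\P_n$ and satisfies
\[|\theta_n(\omega')-\theta_n(\omega)|\ge \eta\,|v(T^n\omega')-v(T^n\omega)|\ge \eta H\, d(T^n\omega',T^n\omega)^\alpha \]
uniformly in $t\in\I$.

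\textbf{Step (iii).} We count the points where a quarter turn occurs, i.e. where $\theta_n\in\frac{\pi}{2}+\pi\ZZ$. On each $S\in\P_n$ the function $\theta_n$ is monotone, and its range has length at most $\pi+\operatorname{osc}\theta\le C$. So $\theta_n$ meets $\frac{\pi}{2}+\pi\ZZ$ in at most $C$ points, or more precisely at most $C$ intervals in the order. The only discontinuities occur at boundaries of cylinders. This gives at most $C\ell^n$ critical points at level $n$.

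\textbf{Step (iv), the main obstacle.} We need to show that the set of $\omega$ with $\operatorname{dist}(\theta_n(\omega),\frac{\pi}{2}+\pi\ZZ)<\delta$ has small $\mu^+$-measure, summably in $n$. Take $\omega_*\in S$ critical. By the displayed inequality, if $|\theta_n(\omega)-\theta_n(\omega_*)|<\delta$ then $d(T^n\omega,T^n\omega_*)<(\delta/\eta H)^{1/\alpha}$. In other words $T^n\omega$ lies in a cylinder of length roughly $k(\delta)=\alpha^{-1}\log(\eta H/\delta)$ around $T^n\omega_*$. Pulling back along $S$, the set of bad points in $S$ is contained in $S\cap T^{-n}[0;\underline{j}_{k}]^+$. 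I would first try to use monotonicity to pin the $\delta$-set to a single cylinder. If that fails, I would instead cover it by two adjacent cylinders of length $n+k$, because in dictionary order the set sits between neighbors.

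Bounded distortion (Lemma \ref{l.mupmbdp}) then gives
\[\mu^+(S\cap[n;\underline{j}_k]^+)\le\gamma\mu^+(S)\mu^+([n;\underline{j}_k]^+)\le \gamma\mu^+(S)\,e^{-ck}.\]
Here $e^{-ck}$ bounds the measure of cylinders of length $k$, which decays exponentially by bounded distortion and full support. Summing over $S\in\P_n$ and over the $C$ critical points in each $S$ yields total bad measure at most $C\gamma\,\delta^{c/\alpha}$, independent of $n$. The delicate points are two. First, Lemma \ref{l.key} only compares points inside a single $S$, so the estimate must be done cylinder by cylinder. Second, the bound must not lose a factor $\ell^n$. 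Bounded distortion is exactly what avoids that loss, because the weight comes from $\mu^+(S)$ rather than from a count of cylinders.

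\textbf{Step (v).} We conclude as in \cite{zhenghe2} and Young. Along an orbit, the cocycle $A$ expands $\vec e_1$ by a factor $\ge c\lambda$ at time $n$ unless $\theta_{n-1}$ is $\delta$-close to $\frac{\pi}{2}+\pi\ZZ$, where $\delta=\lambda^{-1/2}$ say. By step (iv) and Birkhoff averaging, or directly through $\frac1N\sum_n\mu^+(\text{bad}_n)$, the proportion of bad times is at most $C\lambda^{-c/(2\alpha)}$. The cost of a bad time is bounded by $\log\|A\|\le \log\lambda+C$. Together these give $\Tilde L(t;\lambda)\ge(1-C\lambda^{-c'})\log\lambda -C\ge\log\lambda-C_1$ uniformly in $t\in\I$.
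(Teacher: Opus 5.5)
Your overall route (cylinders as the partition $\P_n$, Lemma \ref{l.key} combined with the anti-H\"older bound, then bounded distortion to pass from $\theta_n$-balls to cylinders of length $n+k$) is the paper's. However, Step (iii) rests on a false claim, and the uniformity in Step (iv) depends on it. On $S\in\P_n$ the range of the monotone lift $\theta_n$ is \emph{not} bounded by $\pi+\operatorname{osc}\theta$. By construction $\phi_n(\omega)$ lies in the same interval $(k\pi,(k+1)\pi)$ as $\theta_{n-1}(\omega)$. So on $S$ the function $\phi_n$ sweeps through every $\pi$-interval that $\theta_{n-1}|_S$ visits, jumping by roughly $\pi$ near each point of $S$ where $\theta_{n-1}$ crosses $\frac{\pi}{2}+\pi\ZZ$. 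What is actually true is the recursion of Lemma \ref{l.upperboundofbadpoints}. The number $N_n(S)$ of crossings of $\theta_n$ in $S$ is at most the number of crossings of $\theta_{n-1}$ inside $S$, plus one. This gives only $N_n(S)\le n+1$, and the growth is real. For example, in a full shift take a fixed point $a^\infty$ with $0<a<\ell-1$ and $t=v(a^\infty)$. Then $v(T_+^j\omega)-t$ changes sign across $[0;a^n]^+$ for every $j\le n$, and for $\lambda$ large depending on $n$ this cylinder carries of order $n$ crossings. So no per-cylinder bound uniform in $n$ and $\lambda$ exists. With only $N_n(S)\le n+1$, your summation gives $\mu^+(\text{bad}_n)\le C(n+1)\delta^{c/\alpha}$, which is not uniform in $n$. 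The missing idea is the paper's Lemma \ref{l.boundsum}: instead of bounding crossings per cylinder, bound the weighted count $\Sigma_n=\sum_{S\in\P_n}N_n(S)\mu^+(S)$. Each crossing inherited from a parent $S$ lies in a single child $S'\subset S$, and $\mu^+(S')\le p\,\mu^+(S)$ with $p=B/(B+1)<1$ by Lemma \ref{l.boundratio} and (\ref{e.decayrate}). The at most one new crossing per child costs $\sum_{S'}\mu^+(S')=1$ in total. Hence $\Sigma_{n+1}\le p\Sigma_n+1$ and $\Sigma_n\le(1-p)^{-1}$. Combine this with your cylinder estimate (Lemmas \ref{l.subsets} and \ref{l.thetacylinder}), and with Lemma \ref{l.Dntocylinder} for the $\delta$-sets sitting at the extreme points $m^{\underline i},M^{\underline i}$ of a cylinder. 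The result is $\mu^+(D_n(\delta))\le C\delta^{c}$ uniformly in $n$ and $\lambda$.

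Step (v) also does not yield $\log\lambda-C_1$ as written. If $\theta_{n-1}$ is $\delta$-far from $\frac{\pi}{2}+\pi\ZZ$, the one-step growth is only $\ge\lambda|\cos\theta_{n-1}|\ge\lambda\sin\delta$. For $\delta=\lambda^{-1/2}$ this is about $\lambda^{1/2}$, not $c\lambda$. A bad step can cost up to $2\log\lambda+C$ relative to $\log\lambda$. So a single threshold gives at best $(1-s)\log\lambda-C$ for $\delta=\lambda^{-s}$, or $(1-C\delta^{c})\log\lambda-C_\delta$ for fixed $\delta$, and never an error bounded independently of $\lambda$. The paper instead uses $\log\|A_n\vec e_1\|\ge n\log\lambda+\sum_{j<n}\log|\cos\theta_j|$. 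It then proves $\int\log|\cos\theta_j|\,d\mu^+\ge -C$ uniformly in $j$ and $\lambda$, by splitting into the shells $J_i\setminus J_{i+1}$ at scales $\delta 2^{-i}$ and using the measure bound at every scale. This is why Step (iv) must hold for all small $\delta$, uniformly in $n$ and $\lambda$, which is exactly what the weighted count above provides.
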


We show that $T_+$ satisfies the hyperbolic injective property. Indeed, for every $n\in \NN$ we can define partitions 
\[\P_n := \{[0;\underline{\omega}]^+: \underline{\omega}\in \A^n\}.\]
It follows that for each $n\in \NN$, then
\[\Omega^+ = \bigsqcup_{S\in \P_n} S.\]
Moreover, for any $\omega\in \A^{n+1}$, one has $[0;\underline{\omega}_{n+1}]^+ \subset [0;\underline{\omega}_n]^+$, so $\P_{n+1}$ is a refinement of $\P_n$. Let $S\in \P_n$. If $\omega,\omega'\in S$ such that $T^n_+(\omega) = T^n_+(\omega')$, then $\omega_j = \omega'_j$ for all $j\in \NN$, so $\omega = \omega'$. Hence, $T^n_+|_S$ injects into $\Omega^+$ for all $S\in \P_n$. Applying Lemma \ref{l.key} on the compact set $\I$ and using the anti $\alpha$-H\"older condition implies following corollary.

\begin{corollary}\label{c.thetarelatecylinder}
    Consider a potential $v$ that is anti $\alpha$-H\"older, continuous, and monotone with respect to dictionary order all on $\Omega^+$, and satisfies $0\leq v(\omega)\leq 1$ for all $\omega\in \Omega$. For any $[0:\underline{i}_n]^+\in \P_n$ and $\omega,\omega'\in [0;\underline{i}_n]^+$, then one has the inequality
    \begin{equation}
        |\theta_n(\omega', t) - \theta_n(\omega, t)|\geq \frac{H}{20}d(T^n_+\omega,T^n_+\omega')^\alpha,
    \end{equation}
    for any $n\in \NN$ and $t\in \I$.
\end{corollary}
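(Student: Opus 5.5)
The plan is to combine Lemma \ref{l.key}, applied to the compact set $K=\I=[-1,2]$, with the anti $\alpha$-H\"older hypothesis on $v$. We have already checked that $T_+$ satisfies the hyperbolic injective condition with the cylinder partitions $\P_n=\{[0;\underline{\omega}]^+:\underline{\omega}\in\A^n\}$. Moreover, $v$ is bounded and monotone with respect to dictionary order on $\Omega^+$. So Lemma \ref{l.key} gives, for $\lambda\geq\lambda_0(v)$, every $S=[0;\underline{i}_n]^+\in\P_n$, every $\omega,\omega'\in S$ and every $t\in\I$,
\[
|\theta_n(\omega',t)-\theta_n(\omega,t)|\geq \eta\,|v(T_+^n\omega')-v(T_+^n\omega)|.
\]
Applying the anti $\alpha$-H\"older condition at the points $T_+^n\omega,\,T_+^n\omega'\in\Omega^+$ then gives
\[
|\theta_n(\omega',t)-\theta_n(\omega,t)|\geq \eta H\, d(T_+^n\omega,T_+^n\omega')^\alpha .
\]

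It remains to show that $\eta\geq 1/20$ under the normalization $0\leq v\leq 1$. In the proof of Lemma \ref{l.key}, with $K=[-m_1,m_2]=[-1,2]$ we get $M=\max\{m_1,m_2\}=2$ and $G=M+\|v\|_\infty\leq 3$. Hence
\[
\eta=\frac{1}{2(1+G^2)}\geq\frac{1}{20}.
\]
A sharper constant is available: since $t\in[-1,2]$ and $v\in[0,1]$, we have $t-v\in[-2,2]$, so the mean value estimate (\ref{e.mvtcot}) holds with $1/(1+4)$. Either way the bound $H/20$ follows. The threshold $\lambda_0=\eta^{-1}(5\|v\|_\infty+2M)$ depends only on $v$, consistent with $C_1=C_1(v)$.

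The only subtlety is bookkeeping. Lemma \ref{l.key} holds for $\lambda\geq\lambda_0$, and the corollary implicitly carries the same restriction. For $\lambda<\lambda_0$, Theorem \ref{t.halfPLEpd} is trivial after enlarging $C_1$, since $\Tilde L\geq 0$. I would state this range explicitly. I would also check that the proof of Lemma \ref{l.key} really yields the constant $\eta$, not only the constant $2\eta$ that appears mid-proof, in both subcases. Subcase (a) gives $2\eta-\lambda^{-1}(5\|v\|_\infty+2M)\geq\eta$, which settles that. I expect no genuine obstacle beyond these routine checks.
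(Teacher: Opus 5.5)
Your proposal is correct and follows the paper's own argument: apply Lemma \ref{l.key} with $K=\I$, which gives $M=2$, $G\le 3$ and $\eta=\frac{1}{2(1+G^2)}\ge\frac{1}{20}$, and then apply the anti $\alpha$-H\"older bound at the points $T_+^n\omega,T_+^n\omega'$. Your explicit computation of $\eta$, and your remark that the estimate holds only for $\lambda\ge\lambda_0(v)$ (with small $\lambda$ absorbed into $C_1$ because $\Tilde{L}\ge 0$), spell out details the paper leaves implicit.
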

 
Next, we turn our attention to step (iii). First, we introduce the necessary notation. For every $\underline{i}\in \A^n$, define $m^{\underline{i}} \in [0;\underline{i}]^+$ for each $\underline{i}\in \A^n$ as:
\[m_j^{\underline{i}} = \begin{cases}
    i_j & j = \{0,\dots, n-1\}\\
    0 & \text{ otherwise }
\end{cases}\]
Similarly, define $M^{\underline{i}} \in [0;\underline{i}]$ as:
\[M_j^{\underline{i}} = \begin{cases}
    i_j & j = \{0,\dots, n-1\}\\
    \ell-1 & \text{ otherwise }
\end{cases}\]
Observe from our ordering that $m^{\underline{i}}$ and $M^{\underline{i}}$ represent the minimum and maximum element of the cylinder $[0;\underline{i}]^+$, respectively. Define $\C_{-1, \cdot} = \varnothing$ and for every $n\in \NN$ and $\underline{i}\in \A^n$ we define $\C_{n,\underline{i}}$ as follows: the collection of $\beta\in [0;\underline{i}]^+$ such that there exists a $k_\beta\in \NN$ so that 
\[|\theta_n(\beta) - \frac{\pi}{2} - k_\beta\pi| = \inf_{\omega\in \Omega^+}|\theta_n(\omega) - \frac{\pi}{2} - k_\beta\pi|\] 
where 
\[\theta_n(m^{\underline{i}})\leq \frac{\pi}{2} + k\pi \leq \theta_n(M^{\underline{i}_n})\}.\]
The existence of $\beta$ follows by compactness of $[0;\underline{i}]^+$. The set $\C_{n,\underline{i}}$ contains all elements in step $n$ inside the cylinder $[0;\underline{i}]^+$such that the image under $\theta_n$ is in the set $\frac{\pi}{2}+\pi\ZZ$. In particular, these are points where cancellation can occur, as this is when the rotation matrix rotates by angle close to or equal to $\frac{\pi}{2}$. Let
\begin{equation}\label{e.definecn}
    \C_n = \bigsqcup_{\underline{i}\in \A^n} \C_{n,\underline{i}}.
\end{equation}

\begin{lemma}\label{l.upperboundofbadpoints}
    For each $n\in \NN$,
    \[Card(\C_{n,\underline{i}}) \leq Card(\C_{n - 1, \underline{i}}) + 1\]
    for any $\underline{i}\in \A^n$. 
\end{lemma}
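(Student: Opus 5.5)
The plan is to reduce both cardinalities to counting elements of $\frac{\pi}{2}+\pi\ZZ$ in the ranges of $\theta_n$ and $\theta_{n-1}$ over the relevant cylinders. I read $\C_{n-1,\underline{i}}$ as $\C_{n-1,\underline{i}'}$, where $\underline{i}'\in\A^{n-1}$ is the length $n-1$ prefix of $\underline{i}$; then $[0;\underline{i}]^+\subset[0;\underline{i}']^+$.

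\textbf{Step 1 (reduction to counting).} By Lemma \ref{l.key} (through Corollary \ref{c.thetarelatecylinder}), $\theta_n$ is monotone on $[0;\underline{i}]^+$. Its minimum and maximum over the cylinder are therefore $\theta_n(m^{\underline{i}})$ and $\theta_n(M^{\underline{i}})$. Each admissible $k$, meaning one with
\[\theta_n(m^{\underline{i}})\le \tfrac{\pi}{2}+k\pi\le \theta_n(M^{\underline{i}}),\]
contributes exactly one point $\beta$ to $\C_{n,\underline{i}}$, fixing one choice of minimizer when there are several. Distinct $k$ give distinct $\beta$: the values $\theta_n(\beta)$ are closest to distinct half-integer multiples of $\pi$, and strict monotonicity separates them. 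Hence $Card(\C_{n,\underline{i}})$ equals the number $N_n$ of points of $\frac{\pi}{2}+\pi\ZZ$ in $[\theta_n(m^{\underline{i}}),\theta_n(M^{\underline{i}})]$. In the same way, $Card(\C_{n-1,\underline{i}'})$ equals the number of such points in $[\theta_{n-1}(m^{\underline{i}'}),\theta_{n-1}(M^{\underline{i}'})]$. Since $\theta_{n-1}$ is monotone on the larger cylinder $[0;\underline{i}']^+$, this last interval contains $\theta_{n-1}([0;\underline{i}]^+)$.

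\textbf{Step 2 ($\phi_n$ preserves the half-point structure).} I take the branch of $\cot^{-1}$ in $\phi_n=\cot^{-1}(\lambda^2 g\cot\theta_{n-1})$ to be the one used in the proof of Lemma \ref{l.key}, so that $\theta_{n-1}(\omega)\in(k\pi,(k+1)\pi)$ implies $\phi_n(\omega)\in(k\pi,(k+1)\pi)$. Because $\lambda^2 g>0$, the sign of $\cot$ is preserved. Consequently $\phi_n(\omega)\ge \frac{\pi}{2}+k\pi$ holds if and only if $\theta_{n-1}(\omega)\ge\frac{\pi}{2}+k\pi$, and the same with strict inequalities. It follows that the half-points lying between $\phi_n(m^{\underline{i}})$ and $\phi_n(M^{\underline{i}})$ are exactly those lying between $\theta_{n-1}(m^{\underline{i}})$ and $\theta_{n-1}(M^{\underline{i}})$. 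By Step 1 their number is at most $Card(\C_{n-1,\underline{i}'})$.

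\textbf{Step 3 (the rotation adds at most one).} Write $\theta_n=\phi_n+\theta\circ T^n$ and use $\theta\circ T^n\in(0,\pi)$. This gives
\[\theta_n(m^{\underline{i}})>\phi_n(m^{\underline{i}})\quad\text{and}\quad \theta_n(M^{\underline{i}})<\phi_n(M^{\underline{i}})+\pi .\]
So $[\theta_n(m^{\underline{i}}),\theta_n(M^{\underline{i}})]\subset(\phi_n(m^{\underline{i}}),\phi_n(M^{\underline{i}})+\pi)$. Half-points are $\pi$-spaced, so extending the interval $[\phi_n(m^{\underline{i}}),\phi_n(M^{\underline{i}})]$ on the right by a half-open piece of length $\pi$ adds at most one half-point. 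Combining with Step 2 yields $N_n\le Card(\C_{n-1,\underline{i}'})+1$.

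\textbf{Base case and main obstacle.} For $n=0$, and for the convention $\C_{-1,\cdot}=\varnothing$, the range of $\theta_0$ lies in $(0,\pi)$ and so contains at most one half-point, which is consistent with the bound. I expect the main obstacle to be Step 2. It requires fixing the branch convention for $\cot^{-1}$ carefully, together with the monotonicity of $\phi_n$ relative to $\theta_{n-1}$ in the boundary situations where $\theta_{n-1}$ hits $k\pi$ exactly or an endpoint equals a half-point. I would handle these by stating the comparison with non-strict inequalities and checking the endpoint cases separately. Step 1 also needs care when the infimum defining $\beta$ is attained at several points; fixing one minimizer for each $k$, as above, resolves this.
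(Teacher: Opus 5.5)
Your proof is correct and follows essentially the paper's argument. Monotonicity reduces both sides to counting points of $\frac{\pi}{2}+\pi\ZZ$ between the values at $m^{\underline{i}}$ and $M^{\underline{i}}$; $\phi_n$ and $\theta_{n-1}$ cross exactly the same half-points (the paper's (\ref{e.relatephitheta})); and adding $\theta\circ T^n\in(0,\pi)$ creates at most one more, where your containment $[\theta_n(m^{\underline{i}}),\theta_n(M^{\underline{i}})]\subset(\phi_n(m^{\underline{i}}),\phi_n(M^{\underline{i}})+\pi)$ is a sharper form of the paper's size bound on $\theta_{n+1}([0;\underline{i}a]^+)$.

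One caveat concerns your reading of $\C_{n-1,\underline{i}}$. The version the paper actually proves and later uses is $Card(\C_{n+1,\underline{i}a})\le Card(\C_{n,\underline{i}a})+1$, which supplies the factor $p$ in Lemma \ref{l.boundsum}. That version counts half-points of $\theta_{n-1}$ over the same cylinder $[0;\underline{i}]^+$, which is exactly what your Step 2 establishes. So you should stop there rather than enlarge to the parent cylinder $[0;\underline{i}']^+$: the parent count would only give $\sum_{\beta\in\C_{n+1}}\mu^+([0;\underline{\beta}_{n+1}]^+)\le\sum_{\beta\in\C_n}\mu^+([0;\underline{\beta}_n]^+)+1$ in that lemma.
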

\begin{proof}
    Let $n = 0$ and notice that $|\theta_0(\Omega^+)| < \pi$, so by monotonicity of $\theta_0$ on $\Omega^+$ there is at most one $\frac{\pi}{2}$ point contained in the image. Thus, it follows that
    \[Card(\C_0) \leq 1\]
    which completes the base case.
    
    For induction, notice 
    \begin{equation}\label{e.relatephitheta}
        \{\omega\in \Omega^+: \phi_{n+1}(\omega)\in \frac{\pi}{2}+\pi\ZZ\} = \{\omega\in \Omega^+: \theta_n(\omega)\in \frac{\pi}{2} + \pi\ZZ\}.
    \end{equation}
    We show that for any $\underline{i}\in \A^n$ and $a\in \A$,
    \[Card(\C_{n+1, \underline{i}a}) \leq Card(\C_{n,\underline{i}a}) + 1.\]
    Indeed, $T^{n+1}_+: [0;\underline{i}a]^+ \rightarrow \Omega^+$ is an isomorphism, so
    \begin{align*}
        |\theta_{n+1}([0;\underline{i}a]^+)| &\leq \phi_{n+1}([0;\underline{i}a]^+) + \theta(T^{n+1}_+[0;\underline{i}a])\\
        &\leq \phi_{n+1}([0;\underline{i}a]^+) + |\theta(\Omega)|\\
        &< \phi_{n+1}([0;\underline{i}a]^+) + \pi.
    \end{align*}
    By monotonicity of $\theta_{n+1}|_{[0;\underline{i}a]^+}$ and (\ref{e.relatephitheta}), 
    \[Card(\C_{n+1, \underline{i}a}) \leq Card(\C_{n,\underline{i}a}) + 1,\]
    completing the proof.
\end{proof}

Moving to step (iv), we relate $\delta$-balls around $\theta_n(\beta)$ where $\beta\in \C_n$, to a cylinder set of the form $[0;\underline{\beta}_{n+N}]$ for some fixed $N\in \NN$. To begin, we extract a maximal ratio $p$ of cylinder sets differing by one letter to attain a decay rate of cylinder sets.
\begin{lemma}\label{l.boundratio}
    There exists a positive constant $B > 0$ such that
    \[B^{-1} \leq \min_{a,b\in \A}\Bigg\{\frac{\mu^+([0;\underline{i}a]^+)}{\mu^+([0;\underline{i}b]^+)}\Bigg\} \leq \max_{a,b\in \A}\Bigg\{\frac{\mu^+([0;\underline{i}a]^+)}{\mu^+([0;\underline{i}b]^+)}\Bigg\} \leq B\]
    for any $n\in \NN$ and $\underline{i}\in \A^n$.
\end{lemma}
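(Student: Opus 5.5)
The plan is to derive the bound directly from the bounded distortion property of $\mu^+$ (available by Lemma \ref{l.mupmbdp}), together with $T_+$-invariance and full support. One caveat first: in a subshift of finite type the cylinder $[0;\underline{i}a]^+$ may be empty when $i_{n-1}a\in\F$. I would therefore read the minimum and maximum in the statement as ranging over those $a,b\in\A$ for which $[0;\underline{i}a]^+$ and $[0;\underline{i}b]^+$ are nonempty, which is the only case in which the ratio is defined. The ratio is then positive, because $\mu$ is fully supported and nonempty cylinders are open.

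\textbf{Step 1: split each extended cylinder.} Fix $n\in\NN$, $\underline{i}\in\A^n$ and an admissible $a\in\A$. Write
\[
[0;\underline{i}a]^+ = [0;i_0\dots i_{n-1}]^+\cap[n;a]^+ .
\]
In the notation of the bounded distortion definition, the first cylinder is $[0;j_0\dots j_k]^+$ with $k=n-1$, and the second starts at $l=n>0+k$. The intersection is nonempty by admissibility, so bounded distortion gives
\[
\gamma^{-1}\mu^+([0;\underline{i}]^+)\,\mu^+([n;a]^+)\le \mu^+([0;\underline{i}a]^+)\le \gamma\,\mu^+([0;\underline{i}]^+)\,\mu^+([n;a]^+).
\]
For $n=0$ the cylinder $[0;\underline{i}]^+$ is all of $\Omega^+$ and the statement is trivial.

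\textbf{Step 2: form the ratio and remove the shift.} Dividing the upper bound for $a$ by the lower bound for $b$, the common factor $\mu^+([0;\underline{i}]^+)$ cancels, and
\[
\frac{\mu^+([0;\underline{i}a]^+)}{\mu^+([0;\underline{i}b]^+)}\le \gamma^2\frac{\mu^+([n;a]^+)}{\mu^+([n;b]^+)}.
\]
Since $\mu$ is $T$-invariant, $\mu^+=\pi^+_*\mu$ is $T_+$-invariant. Because $[n;a]^+=T_+^{-n}[0;a]^+$, this gives $\mu^+([n;a]^+)=\mu^+([0;a]^+)$, and likewise for $b$.

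\textbf{Step 3: choose the constant.} Full support makes $\mu^+([0;a]^+)>0$ for every letter $a$ occurring in $\Omega^+$, and there are finitely many such letters. So I would set
\[
B=\gamma^2\,\frac{\max_a \mu^+([0;a]^+)}{\min_b\mu^+([0;b]^+)},
\]
which is independent of $n$ and $\underline{i}$. This gives the upper bound for every admissible pair. Exchanging the roles of $a$ and $b$ gives the lower bound $B^{-1}$.

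\textbf{Main obstacle.} The only real point to check is that the indices match the hypothesis $l>n+k$ of the bounded distortion definition, and that the emptiness of forbidden extensions is handled correctly. Both are bookkeeping, so I expect no genuine difficulty.
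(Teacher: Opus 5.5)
Your proof is correct and follows the paper's argument. Like the paper, you split $[0;\underline{i}a]^+$ with bounded distortion, cancel the common factor $\mu^+([0;\underline{i}]^+)$, use $T_+$-invariance to reduce to one-letter cylinders, and take $B=\gamma^2 B_0$, where $B_0$ bounds the ratios of one-letter cylinder measures. Your explicit restriction to admissible (nonempty) extensions is a sensible clarification the paper leaves implicit, and the paper's enlargement of $B$ to $\max\{\gamma^2B_0,|\A|\}\ge 2$ is needed only later, for $p=\frac{B}{B+1}$, not for this statement.
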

\begin{proof}
    Pick any $B_0> 1$ such that
    \[B_0^{-1} \leq \min_{a,b\in \A}\Bigg\{\frac{\mu^+([0;a]^+)}{\mu^+([0;b]^+)}\Bigg\} \leq \max_{a,b\in \A}\Bigg\{\frac{\mu^+([0;a]^+)}{\mu^+([0;b]^+)}\Bigg\}\leq B_0\]
    which exists since there are a finite number of ratios considered.
    
    Consider $n\in \NN$ and any $\underline{i}\in \A^n$. By bounded distortion property and $T$-invariance,
    \begin{equation}\label{e.bdpmaxrat}
        \gamma^{-1}\leq \frac{\mu^+([0;\underline{i}k]^+)}{\mu^+([0;\underline{i}]^+)\mu^+([0;k]^+)} \leq \gamma
    \end{equation}
    for any $k\in \A$. Applying (\ref{e.bdpmaxrat}) on any $a,b\in \A$, we find the following upper bound.
    \begin{align*}
        \frac{\mu^+([0;\underline{i}a]^+)}{\mu^+([0;\underline{i}b]^+)} &\leq \frac{\gamma \mu^+([0;\underline{i}]^+)\mu^+([0;a]^+)}{\gamma^{-1}\mu^+([0;\underline{i}]^+)\mu^+([0;b]^+)}\\
        &= \gamma^2 \frac{\mu^+([0;a]^+)}{\mu^+([0;b]^+)}\\
        &\leq \gamma^2 B_0
    \end{align*}
    Similarly, we find the following lower bound.
    \begin{align*}
        \frac{\mu^+([0;\underline{i}a]^+)}{\mu^+([0;\underline{i}b]^+)} &\geq \frac{\gamma^{-1} \mu^+([0;\underline{i}]^+)\mu^+([0;a])}{\gamma\mu^+([0;\underline{i}]^+)\mu^+([0;b]^+)}\\
        &=\gamma^{-2} \frac{\mu^+([0;a]^+)}{\mu^+([0;b]^+)}\\
        &\geq \gamma^{-2} B_0^{-1}
    \end{align*}
    Letting $B = \max\{\gamma^2 B_0, |\A|\} \geq 2$ completes the proof.
\end{proof}

Let $p = \frac{B}{B+1}$, which will be fixed for the remainder of this section. It is evident that for any $n\in \NN$ and $\underline{i}\in \A^n$
\[\mu^+([0;\underline{i}]) = \sum_{a\in \A} \mu^+([0;\underline{i}a]).\]
Consider any $b\in \A$ and by Lemma \ref{l.boundratio},
\[\frac{\mu^+([0;\underline{i}])}{\mu^+([0;\underline{i}b])} = 1 + \sum_{a\in \A\setminus \{b\}} \frac{\mu^+([0;\underline{i}a])}{\mu^+([0;\underline{i}b])} \geq 1 + \frac{1}{B}.\]
Rearranging, we may conclude for any $\underline{i}\in \A^n$
\begin{equation}\label{e.decayrate}
    \mu^+([0;\underline{i}b]^+) \leq p\mu^+([0;\underline{i}]^+)
\end{equation}
for all $b\in \A$. Combining (\ref{e.decayrate}) and Corollary \ref{c.thetarelatecylinder} we are able to relate the $\delta$-balls to cylinder sets through the following lemma.

\begin{lemma}\label{l.subsets}
    For any $\delta\in \Big(0, \min\{\frac{\pi}{8}, \frac{H}{5^\alpha \cdot 40}\}\Big)$, there exists a positive integer $N = N(\delta)\geq 1$ such that for all $n\in \NN, \underline{i}\in \A^n$, and  $\beta\in \C_{n,\underline{i}}$, \begin{align}
        \{\omega&\in [0;\underline{i}]^+: |\theta_n(\omega) - \frac{\pi}{2} -k_{\beta}\pi|<\delta \}\label{e.relatesubsets}\\
        &\subset \{\omega\in [0;\underline{\beta}_n]^+: |\theta_n(\omega) - \theta_n(\beta)| <2\delta\} \nonumber\\
        &\subset [0;\underline{\beta}_{n + N}]^+. \nonumber
    \end{align}
    Moreover, there is some fixed constant $C_2 = C_2(v) > 0$ such that 
    \[p^{N} < C_2\delta^{-\ln p}.\]
\end{lemma}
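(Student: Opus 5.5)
The proof has two inclusions and a measure-counting estimate, and I would handle them in that order.

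\emph{First inclusion.} Let $\omega\in[0;\underline{i}]^+$ with $|\theta_n(\omega)-\frac{\pi}{2}-k_\beta\pi|<\delta$. Since $\beta\in\C_{n,\underline{i}}$ minimizes $|\theta_n(\cdot)-\frac{\pi}{2}-k_\beta\pi|$, we also have $|\theta_n(\beta)-\frac{\pi}{2}-k_\beta\pi|<\delta$. The triangle inequality then gives $|\theta_n(\omega)-\theta_n(\beta)|<2\delta$. Also $\beta\in[0;\underline{i}]^+$, so $\underline{\beta}_n=\underline{i}$ and the first set sits inside the second.

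\emph{Second inclusion.} Take $\omega\in[0;\underline{\beta}_n]^+$ with $|\theta_n(\omega)-\theta_n(\beta)|<2\delta$. Corollary \ref{c.thetarelatecylinder} applies, since both points lie in the same element of $\P_n$, and gives
\[\frac{H}{20}\,d(T^n_+\omega,T^n_+\beta)^\alpha\le|\theta_n(\omega)-\theta_n(\beta)|<2\delta.\]
Hence $d(T^n_+\omega,T^n_+\beta)<(40\delta/H)^{1/\alpha}$. With the half-line metric $d=e^{-N(\cdot,\cdot)}$, this says $T^n_+\omega$ and $T^n_+\beta$ agree on their first $N(\delta)$ coordinates, where
\[N(\delta)=\Big\lfloor \alpha^{-1}\ln\big(H/(40\delta)\big)\Big\rfloor .\]
Combined with $\underline{\omega}_n=\underline{\beta}_n$, this yields $\omega\in[0;\underline{\beta}_{n+N}]^+$. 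The hypothesis $\delta<H/(5^\alpha\cdot 40)$ makes $H/(40\delta)>5^\alpha$, so $\alpha^{-1}\ln(H/(40\delta))>\ln 5>1$ and therefore $N\ge1$. The constraint $\delta<\pi/8$ is only there so that the $2\delta$ windows around distinct points of $\frac{\pi}{2}+\pi\ZZ$ do not overlap; the inclusions themselves do not need it. I expect the main obstacle to be bookkeeping in this step: one has to fix the paper's convention for $N(\omega,\omega')$ on $\Omega^+$ (the precise meaning of $\max\{n:\omega_j=\omega'_j,\ |j|<n\}$) and possibly shift $N$ by one so that "agree on indices $<N$" really matches the cylinder of length $n+N$.

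\emph{Bound on $p^N$.} Since $0<p<1$, we have $\ln p<0$, so $p^{N}$ decreases in $N$. Using $N\ge\alpha^{-1}\ln(H/(40\delta))-1$,
\[p^N\le p^{-1}\exp\Big(\frac{\ln p}{\alpha}\ln\frac{H}{40\delta}\Big)=p^{-1}\Big(\frac{40\delta}{H}\Big)^{-\ln p/\alpha}.\]
This is $C\,\delta^{-\ln p/\alpha}$. To match the stated form $C_2\delta^{-\ln p}$, I would either read $p$ as replaced by $p^{1/\alpha}$ in this exponent, or, when $\alpha\le1$, use $\delta<1$ to bound $\delta^{-\ln p/\alpha}\le\delta^{-\ln p}$. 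I would take
\[C_2=\frac{2}{p}\,(40/H)^{-\ln p/\alpha},\]
which depends only on $v$ through $H$ and $\alpha$, and on $\mu$ through $p$. The factor $2$ gives the strict inequality.
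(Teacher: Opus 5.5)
Your argument is the same as the paper's, and it is correct: the triangle inequality through $\frac{\pi}{2}+k_\beta\pi$ gives the first inclusion, and Corollary \ref{c.thetarelatecylinder} with $d=e^{-N(\cdot,\cdot)}$ and a floor gives the second.

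Both of your side worries resolve. No index shift is needed. On $\Omega^+$, $N(\omega,\omega')$ is the first index where the two sequences disagree, so $d(T^n_+\omega,T^n_+\beta)<r$ gives agreement on the first $\lfloor-\ln r\rfloor+1$ coordinates. Hence your $N=\lfloor\alpha^{-1}\ln(H/(40\delta))\rfloor$ works as written. You are also right that $\delta<\frac{\pi}{8}$ plays no role here. The paper uses it only to place $\omega$ and $\beta$ in the same interval $(k_\beta\pi,(k_\beta+1)\pi)$, and Corollary \ref{c.thetarelatecylinder} does not need that.

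You should not hedge on the exponent: $\delta^{-\ln p/\alpha}$ is the right conclusion, and the stated $\delta^{-\ln p}$ is not. The paper reaches $-\ln p$ by taking $N=\lfloor-\ln((40/H)^{1/\alpha}\delta)\rfloor$, which drops the $1/\alpha$ on $\delta$ in $d<(40\delta/H)^{1/\alpha}$. For $\alpha>1$ this $N$ is larger than the corollary justifies, so the paper's second inclusion is unproved. Its $C_2=(40/H)^{1/\alpha}$ is also not the constant that floor produces.

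In fact the stated bound fails for $\alpha>1$. Take the full $2$-shift and $v(\omega)=\sum_{j\ge0}\omega_j3^{-j-1}$.
\begin{itemize}
\item $v$ is continuous and monotone, and satisfies $\frac16d^{\ln3}\le|v(\omega)-v(\omega')|\le\frac12d^{\ln3}$. So it is anti $\alpha$-H\"older for every $\alpha\ge\ln3$.
\item At $n=0$ with $t=v(\beta)$, we have $\beta\in\C_0$ and $|\theta_0(\omega)-\frac{\pi}{2}|=|\arctan(v(\beta)-v(\omega))|\le|v(\omega)-v(\beta)|$.
\item Hence the middle set contains all of $[0;\underline{\beta}_m]^+$ as soon as $\frac12 3^{-m}<2\delta$.
\item The second inclusion then forces $N\le\log_3\frac{1}{4\delta}+1$, so $p^N\ge p\,(4\delta)^{-\ln p/\ln3}$, which is not $O(\delta^{-\ln p})$ as $\delta\to0$.
\end{itemize}

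So the correct statement is $p^N<C_2\delta^{-\ln p/\alpha}$, which is exactly what you proved. It implies the stated form when $\alpha\le1$, since $\delta<1$. Nothing downstream is lost: Lemma \ref{l.thetacylinder}, Corollary \ref{c.measureDN}, and the integral estimate in the proof of Theorem \ref{t.halfPLEpd} only need some fixed positive power of $\delta$.

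Finally, $B\ge|\A|\ge2$ gives $p\in[\frac23,1)$. So the $\mu$-dependence of your $C_2$ can be bounded away, matching $C_2=C_2(v)$.
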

\begin{proof}
    Let $\delta\in \Big(0, \min\{\frac{\pi}{8}, \frac{H}{5^\alpha \cdot 40}\}\Big)$, $n\in \NN, \underline{i}\in \A^n$, and $\beta\in \C_{n,\underline{i}}$. Notice if
    \[\inf_{\omega\in [0;\underline{i}]}|\theta_n(\omega) - \frac{\pi}{2} - k_\beta\pi| > \delta,\]
    then the first set in (\ref{e.relatesubsets}) is empty. In this case, the statement holds trivially for any $N\geq 1$. It remains to consider when
    \[\inf_{\omega\in [0;\underline{i}]}|\theta_n(\omega) - \frac{\pi}{2} - k_\beta\pi| < \delta.\]
    
    Consider any $\omega\in \{\omega\in [0;\underline{i}]^+: |\theta_n(\omega) - \frac{\pi}{2} -k_{\beta}\pi|<\delta \}$. Then
    \[|\theta_n(\omega) - \theta_n(\beta)| \leq |\theta_n(\omega) - \frac{\pi}{2} -k_\beta \pi| + |\frac{\pi}{2} + k_\beta \pi - \theta_n(\beta)| < 2\delta.\]
    This implies that the first containment of (\ref{e.relatesubsets}) holds. Next, we show that $\omega$ and $\beta$ lie in the same $\pi\ZZ$ interval. Indulging in two straightforward computations, one attains:
    \[\theta_n(\beta) + 2\delta < \frac{\pi}{2} + k_\beta\pi + 3\delta < \frac{7\pi}{8} + k_\beta \pi < (k_\beta+1)\pi\]
    and
    \[\theta_n(\beta) - 2\delta > \frac{\pi}{2} + k_\beta\pi - 3\delta > \frac{\pi}{8} + k_\beta \pi > k_\beta\pi.\]
    Applying Corollary \ref{c.thetarelatecylinder},
    \begin{align*}
        \Big\{\omega\in [0;\underline{\beta}_n]^+: |\theta_n(\omega) - \theta_n(\beta)| < 2\delta\Big\} 
        &\subset \Big\{\omega\in [0;\underline{\beta}_n]^+: d(T^n_+\omega,T^n_+\beta) < \Big(\frac{40\delta}{H}\Big)^{\frac{1}{\alpha}}\Big\}.
    \end{align*}
    Recall the function $N(\cdot, \cdot)$ from (\ref{e.N}). If \[\omega \in \Big\{\omega\in [0;\underline{\beta}_n]^+: d(T^n_+\omega,T^n_+\beta) < \Big(\frac{40\delta}{H}\Big)^{\frac{1}{\alpha}}\Big\},\]
    then it follows that
    \[N(T^n_+\omega, T^n_+\beta) > -\ln\Bigg(\Big(\frac{40}{H}\Big)^{\frac{1}{\alpha}} \delta \Bigg).\]
    Let $N = \Bigg\lfloor -\ln\Bigg(\Big(\frac{40}{H}\Big)^{\frac{1}{\alpha}} \delta \Bigg) \Bigg\rfloor$ and note
    \[N = \Bigg\lfloor-\ln\Bigg(\Big(\frac{40}{H}\Big)^{\frac{1}{\alpha}} \delta  \Bigg)\Bigg\rfloor \geq \Bigg\lfloor-\ln\Bigg(\frac{1}{5} \Bigg)\Bigg\rfloor \geq 1.\]
    With this choice of $N$ we obtain
    \[\Big\{\omega\in [0;\underline{\beta}_n]^+: d(T^n_+\omega,T^n_+\beta) < \Big(\frac{40\delta}{H}\Big)^{\frac{1}{\alpha}}\Big\} \subset [0;\underline{\beta}_{n+N}]^+\]
    and $p^{N} < C_2\delta^{-\ln p}$ where $C_2 = \Big(\frac{40}{H}\Big)^{\frac{1}{\alpha}}$. 
\end{proof}

Define $||\cdot||_{\RR\PP^1}$ to output the distance to the nearest $\pi \ZZ$. Consider any $\delta\in\Big(0, \min\{\frac{\pi}{8}, \frac{H}{5^\alpha \cdot 40}\}\Big)$ and define
\[D_n(\delta) = \{\omega: ||\theta_n(\omega) - \frac{\pi}{2}||_{\RR\PP^1} < \delta\}\]
and a subset of $D_n(\delta)$ as
\[B_n(\delta) = \bigsqcup_{\beta \in \C_n} \{\omega\in [0;\underline{\beta}_n]^+: |\theta_n(\omega) -\theta_n(\beta)| < \delta\}\]
for all $n\in \NN$. To complete step (iv), one must control the measure of $D_n(\delta)$, which is partially controlled by the $n$-step geometric series of $p$. 

\begin{lemma}\label{l.boundsum}
    For any $n\in \NN$,
    \[\sum_{\beta\in \C_n} \mu^+([0;\underline{\beta}_n]^+) \leq \frac{1-p^{n+1}}{1-p}.\]
    Consequently 
    \[\sum_{\beta\in \C_n} \mu^+([0;\underline{\beta}_n]^+) \leq \frac{1}{1-p}\]
    for all $n\in \NN$.
\end{lemma}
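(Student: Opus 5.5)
We argue by induction on $n$, aiming for the stronger statement $S_n := \sum_{\beta\in \C_n}\mu^+([0;\underline{\beta}_n]^+) \leq \sum_{j=0}^{n} p^j$. The second claim then follows at once from $\sum_{j=0}^n p^j = \frac{1-p^{n+1}}{1-p}\leq \frac{1}{1-p}$. For the base case $n=0$, $\C_0$ has at most one element by Lemma \ref{l.upperboundofbadpoints}. With the convention $[0;\underline{\beta}_0]^+=\Omega^+$ we get $S_0\leq 1$, and this matches the bound.

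For the inductive step we group the points of $\C_{n+1}$ according to their parent cylinder. Each $\beta\in\C_{n+1}$ lies in a cylinder $[0;\underline{i}a]^+$ with $\underline{i}\in\A^n$ and $a\in\A$, and its contribution is $\mu^+([0;\underline{i}a]^+)$. Monotonicity of $\theta_{n+1}$ on $[0;\underline{i}a]^+$ (Lemma \ref{l.key}) together with $\theta_{n+1}(\beta)\in\frac{\pi}{2}+\pi\ZZ$ lets us split the points of $\C_{n+1,\underline{i}a}$ into two kinds.

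The \emph{inherited} points are those whose $\frac{\pi}{2}+k\pi$ level corresponds, via (\ref{e.relatephitheta}) and the fact that $\phi_{n+1}$ preserves the $\pi\ZZ$-interval of $\theta_n$, to a level already attained by $\theta_n$ on the parent cylinder $[0;\underline{i}]^+$. Along each such level the range of $\theta_{n+1}$ over $[0;\underline{i}]^+$ is split among the children $[0;\underline{i}a]^+$. We then want the total children mass attached to one point $\beta'\in\C_{n,\underline{i}}$ to be at most $\mu^+([0;\underline{\beta'}_n]^+)$. The \emph{new} point is the at most one extra point per cylinder allowed by Lemma \ref{l.upperboundofbadpoints}; it comes from the additional $\theta(T^{n+1}\cdot)$ shift of width $<\pi$. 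Each such new point contributes one child cylinder of mass at most $p\,\mu^+([0;\underline{i}]^+)$ by (\ref{e.decayrate}).

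Summing over $\underline{i}$ would give
\[
S_{n+1}\leq S_n + p\,(\text{mass of the relevant parents}).
\]
If the parents are themselves cylinders attached to bad points, this iterates to $1+p+\dots+p^{n+1}$. We would arrange the bookkeeping so that the parent mass carrying the new point is controlled by the previous generation, which yields the geometric factor at each level, telescoping to $\sum_{j\le n+1}p^j$.

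The main obstacle is making this accounting honest. We must show that a level $\frac{\pi}{2}+k\pi$ crossed on $[0;\underline{i}]^+$ is realized in only one child, or at least that its total child mass is bounded by the parent mass. We must also show that the new points are counted with a factor $p$ rather than $1$. We would try first to use that the image $\theta_{n+1}([0;\underline{i}]^+)$ is monotone and connected in order, so each level lies in a single child $[0;\underline{i}a]^+$ (the one containing the crossing). The children cylinders are then disjoint subsets of $[0;\underline{\beta'}_n]^+$, and the decay (\ref{e.decayrate}) supplies the factor $p$ for each refinement.
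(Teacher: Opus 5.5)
There is a genuine gap: your recursion puts the weights $1$ and $p$ on the wrong terms, so it cannot close uniformly in $n$.

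\textbf{Why your recursion fails.} You bound the inherited points with factor $1$: you only ask that the child mass attached to $\beta'\in\C_{n,\underline i}$ be at most $\mu^+([0;\underline{\beta'}_n]^+)$. That gives $S_{n+1}\le S_n+(\text{new-point term})$. Telescoping to $\sum_{j\le n+1}p^j$ would then require the new-point term to be at most $p^{n+1}$, and nothing gives that.

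New $\frac{\pi}{2}+\pi\ZZ$ points are not confined to cylinders that already carry bad points. $T^{n+1}_+$ maps each $[0;\underline i a]^+$ onto a union of one-letter cylinders, so $\theta\circ T^{n+1}_+$ has an $n$-independent range on every cylinder of generation $n+1$. Hence a new point can occur in essentially every such cylinder. Their total contribution is only bounded by $\sum_{\underline j\in\A^{n+1}}\mu^+([0;\underline j]^+)=1$, which is genuinely of order one.

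With this, your recursion yields $S_{n+1}\le S_n+1$, i.e. linear growth. Your hypothetical does not help either: if the parents carrying new points were attached to bad points, their mass would only be bounded by $S_n$, giving $S_{n+1}\le(1+p)S_n$, not a geometric sum. So the step ``arrange the bookkeeping so that the parent mass carrying the new point is controlled by the previous generation'' is not merely unproven; it cannot deliver the bound. The missing factor $p$ is not on the new points but on the inherited ones.

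\textbf{The fix.} Use the observation in your last sentence, but assign it the other way round: the factor $p$ goes on the inherited points and the factor $1$ on the new ones.
\begin{itemize}
\item Each $\beta'\in\C_{n,\underline i}$ lies in exactly one child $[0;\underline i a]^+$. By (\ref{e.decayrate}) that child has mass at most $p\,\mu^+([0;\underline i]^+)$, so the inherited points contribute at most $pS_n$.
\item By Lemma \ref{l.upperboundofbadpoints}, each cylinder of generation $n+1$ carries at most one new point. These cylinders partition $\Omega^+$, so the new points contribute at most $1$.
\end{itemize}
This is exactly the paper's recursion:
$S_{n+1}\le pS_n+1\le p\,\frac{1-p^{n+1}}{1-p}+1=\frac{1-p^{n+2}}{1-p}$.
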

\begin{proof}
    The base case of $n=0$ is trivial, since $Card(\C_0)\leq 1$ and $\mu(\Omega)\leq 1$. Assume for induction that there exists some $n\in \NN$, such that
    \[\sum_{\beta\in \C_n} \mu^+([0;\underline{\beta}_n]^+) \leq \frac{1-p^{n+1}}{1-p}.\]
    Applying Lemma \ref{l.upperboundofbadpoints}, 
    \begin{align*}
        \sum_{\beta\in \C_{n+1}} \mu^+([0;\underline{\beta}_{n+1}]^+) &= \sum_{\underline{i}\in \A^{n+1}} Card(\C_{n+1,\underline{i}}) \mu^+([0;\underline{i}]^+)\\
        &\leq \sum_{\underline{i}\in \A^{n+1}} \Big(Card(\C_{n,\underline{i}}) + 1\Big) \mu^+([0;\underline{i}]^+)\\
        &= \sum_{\underline{i}\in \A^{n+1}} Card(\C_{n,\underline{i}})\mu^+([0;\underline{i}]^+) +  \sum_{\underline{i}\in \A^{n+1}} \mu^+([0;\underline{i}]^+)\\
        &\leq \sum_{\underline{i}\in \A^{n+1}} Card(\C_{n,\underline{i}})\mu^+([0;\underline{i}_n]^+)p + 1\\
        &= p \sum_{\beta\in \C_n} \mu^+([0;\underline{\beta}_n]^+) +1\\
        &\leq \frac{1-p^{n+2}}{1-p}.
    \end{align*}
    This completes the inductive step. The latter part is immediate, as
    \[\frac{1-p^{n+1}}{1-p}\leq \frac{1}{1-p}\]
    for all $n\in \NN$.
\end{proof}

\begin{lemma}\label{l.thetacylinder}
    For any $\delta\in \Big(0, \min\{\frac{\pi}{8}, \frac{H}{5^\alpha \cdot 40}\}\Big)$,   
    \[\mu^+(B_n(\delta)) < C\delta^{-\ln p}\]
    for all $n\in \NN$.
\end{lemma}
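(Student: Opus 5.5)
The plan is to bound $\mu^+(B_n(\delta))$ by a sum over $\beta\in\C_n$ of the measures of the individual pieces. Each piece will be placed inside a cylinder of length $n+N$ using Lemma~\ref{l.subsets}. The resulting cylinder measures will then be shrunk back to length-$n$ cylinders via the decay rate (\ref{e.decayrate}), and the sum of those will be controlled by Lemma~\ref{l.boundsum}.

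Fix $\delta$ in the stated range and $n\in\NN$. By definition $B_n(\delta)$ is the union over $\beta\in\C_n$ of
\[\{\omega\in[0;\underline{\beta}_n]^+ : |\theta_n(\omega)-\theta_n(\beta)|<\delta\}.\]
This set is contained in the analogous set with $2\delta$ in place of $\delta$. By the second containment in Lemma~\ref{l.subsets}, the latter lies in $[0;\underline{\beta}_{n+N}]^+$ with $N=N(\delta)$. Notice that the second containment in the proof of Lemma~\ref{l.subsets} only uses Corollary~\ref{c.thetarelatecylinder} on $[0;\underline{\beta}_n]^+$, so it holds for every $\beta\in\C_n$. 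Subadditivity then gives
\[\mu^+(B_n(\delta))\le\sum_{\beta\in\C_n}\mu^+([0;\underline{\beta}_{n+N}]^+).\]

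Next, apply (\ref{e.decayrate}) $N$ times along the word $\underline{\beta}_{n+N}$, peeling off one letter at a time. This yields
\[\mu^+([0;\underline{\beta}_{n+N}]^+)\le p^N\mu^+([0;\underline{\beta}_n]^+).\]
Summing over $\beta$ and invoking Lemma~\ref{l.boundsum}, we get
\[\mu^+(B_n(\delta))\le \frac{p^N}{1-p}.\]
The final estimate in Lemma~\ref{l.subsets} gives $p^N<C_2\delta^{-\ln p}$, hence $\mu^+(B_n(\delta))<\frac{C_2}{1-p}\delta^{-\ln p}$. Since $p=B/(B+1)$ and $C_2$ depend only on $v$ and $\mu$, we may set $C=C_2/(1-p)$.

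The step needing the most care is the exponent bookkeeping in $p^N<C_2\delta^{-\ln p}$. With $N=\lfloor -\ln((40/H)^{1/\alpha}\delta)\rfloor$, we have $N\ge -\ln((40/H)^{1/\alpha}\delta)-1$. Because $0<p<1$,
\[p^N\le p^{-1}\,p^{-\ln((40/H)^{1/\alpha}\delta)}=p^{-1}\big((40/H)^{1/\alpha}\delta\big)^{-\ln p}.\]
The right-hand side is a constant times $\delta^{-\ln p}$, where $-\ln p>0$. So if the constant $C_2$ from Lemma~\ref{l.subsets} needs adjusting by the factor $p^{-1}$ and a power of $(40/H)^{1/\alpha}$, this is absorbed into $C$. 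Beyond that, the only subtlety is that the sets indexed by different $\beta$ may overlap. Overlap is harmless because we use subadditivity rather than disjointness.
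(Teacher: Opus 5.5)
Your argument is the paper's own. Each piece of $B_n(\delta)$ is placed in $[0;\underline{\beta}_{n+N}]^+$ by Lemma \ref{l.subsets}, that cylinder is bounded by $p^N\mu^+([0;\underline{\beta}_n]^+)$, and Lemma \ref{l.boundsum} finishes. Iterating (\ref{e.decayrate}) directly, instead of first splitting via bounded distortion and $T_+$-invariance, and using subadditivity rather than disjointness are harmless simplifications. Taking Lemma \ref{l.subsets} as stated, your proof is complete.

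However, the exponent bookkeeping you singled out contains an error, which both you and the paper inherit from the proof of Lemma \ref{l.subsets}.

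\textbf{The wrong choice of $N$.} Corollary \ref{c.thetarelatecylinder} turns $|\theta_n(\omega)-\theta_n(\beta)|<2\delta$ into $d(T^n_+\omega,T^n_+\beta)<(40\delta/H)^{1/\alpha}=(40/H)^{1/\alpha}\delta^{1/\alpha}$, not into $d(T^n_+\omega,T^n_+\beta)<(40/H)^{1/\alpha}\delta$. So the containment in $[0;\underline{\beta}_{n+N}]^+$ is justified only for $N=\lfloor-\frac{1}{\alpha}\ln(40\delta/H)\rfloor$. This is exactly what the restriction $\delta<H/(5^\alpha\cdot 40)$ is calibrated for: it gives $(40\delta/H)^{1/\alpha}<1/5$, hence $N\ge 1$. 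With your $N=\lfloor-\ln((40/H)^{1/\alpha}\delta)\rfloor$, the containment fails when $\alpha>1$.

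\textbf{What the argument actually gives.} With the correct $N$, your own computation yields $p^N\le p^{-1}(40\delta/H)^{-\ln p/\alpha}$, and hence only $\mu^+(B_n(\delta))\le C\delta^{-\ln p/\alpha}$. This implies the stated bound when $\alpha\le 1$, but not when $\alpha>1$. The case $\alpha>1$ is often forced: on a full $\ell$-shift with $\ell\ge 3$, every anti $\alpha$-H\"older $v$ has $\alpha\ge\ln\ell>1$.

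\textbf{The stated exponent can fail.} Take the full two-shift with the $(\frac12,\frac12)$ Bernoulli measure and $v(\omega)=\sum_{j\ge0}\omega_j k^{-(j+1)}$ with $k\ge 3$. This $v$ is monotone and anti $\ln k$-H\"older. For $t=v(\xi)$ one has $\C_0=\{\xi\}$ and $B_0(\delta)\supset[0;\underline{\xi}_m]^+$ with $m\approx\log_k(1/\delta)$. Hence $\mu^+(B_0(\delta))\gtrsim\delta^{\ln 2/\ln k}$, which exceeds any $C\delta^{-\ln p}$ for small $\delta$ once $\ln 2/\ln k<-\ln p$. Recall that $p$ depends only on $\mu$, so $k$ can be chosen this large.

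\textbf{The repair.} State the lemma with exponent $-\ln p/\alpha$; compare the $\delta^{1/\alpha}$ in Corollary \ref{c.measureDNdoubling}. Corollary \ref{c.measureDN} and the proof of Theorem \ref{t.halfPLEpd} only use that the bound is some positive power of $\delta$, so nothing downstream breaks.
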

\begin{proof}
    Let $\delta\in \Big(0, \min\{\frac{\pi}{8}, \frac{H}{5^\alpha \cdot 40}\}\Big)$. By Corollary \ref{l.subsets}, there exists positive integer $N\geq 1$ such that 
    \[p^{N} < C_2\delta^{-\ln p}\]
    and
    \[\{\omega\in [0;\underline{\beta}_n]^+: |\theta_n(\omega) -\theta_n(\beta)| < \delta\} \subset [0;\underline{\beta}_{n+N}]^+\]
    for all $n\in \NN$ and $\beta\in \C_n$. As a consequence, for any $n\in \NN$ we have the containment
    \[B_n(\delta) \subset \bigsqcup_{\beta\in \C_n}[0;\underline{\beta}_{n+N}]^+.\]
    Combining Lemma \ref{l.subsets}, Lemma \ref{l.boundsum}, (\ref{e.decayrate}), bounded distortion, and $T_+$-invariance leads to the following:
    \begin{align*}
        \mu^+(B_n(\delta)) &= \mu^+(\bigsqcup_{\beta\in \C_n}[0;\underline{\beta}_{n+N}]^+)\\
        &= \sum_{\beta \in \C_n}\mu^+([0;\underline{\beta}_{n+N}]^+)\\
        &\leq C\sum_{\beta \in \C_n}\mu^+([0;\underline{\beta}_{n}]^+)\mu^+([0;\beta_{n}\dots\beta_{n+N-1}]^+)\\
        &\leq Cp^{N} \sum_{\beta \in \C_n}\mu^+([0;\underline{\beta}_{n}]^+)\\
        &< C\delta^{-\ln p} \sum_{\beta \in \C_n}\mu^+([0;\underline{\beta}_{n}]^+)\\
        &< C\delta^{-\ln p}.
    \end{align*}
\end{proof}

To measure $D_n(\delta)\setminus B_n(\delta)$, we consider any $\underline{i}\in \A^n$ and $\Big(D_n(\delta)\setminus B_n(\delta)\Big)\cap [0;\underline{i}]^+$. It can be observed that the elements in this set arise when $\theta_n(m^{\underline{i}_n})$ or $ \theta_n(M^{\underline{i}_n})$ are $\delta$-close to a $\frac{\pi}{2} + \pi\ZZ$. Thus, using similar arguments in measuring $B_n(\delta)$, we find that $\Big(D_n(\delta)\setminus B_n(\delta)\Big)\cap [0;\underline{i}]^+$ is contained in the union of cylinders around $m^{\underline{i}_n}$ and $M^{\underline{i}_n}$. 

\begin{lemma}\label{l.Dntocylinder}
    For $\delta\in \Big(0, \min\{\frac{\pi}{8}, \frac{H}{5^\alpha \cdot 40}\}\Big)$, there exists positive integer $N = N(\delta) \geq 1$ such that
    \[\Bigg(D_n(\delta) \setminus B_n(\delta)\Bigg) \cap [0;\underline{i}]^+\subset [0;\underline{m^{\underline{i}}}_{n+N}]^+ \sqcup [0;\underline{M^{\underline{i}}}_{n+N}]^+\]
    for every $n\in \NN$ and $\underline{i}\in \A^n$
\end{lemma}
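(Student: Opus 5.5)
Fix $n\in\NN$, $\underline{i}\in\A^n$ and $\omega\in \big(D_n(\delta)\setminus B_n(\delta)\big)\cap[0;\underline{i}]^+$. The plan is to show that $\theta_n(\omega)$ is $\delta$-close to a point of $\frac{\pi}{2}+\pi\ZZ$ that is \emph{not} attained inside the range $[\theta_n(m^{\underline{i}}),\theta_n(M^{\underline{i}})]$. Then $\omega$ must be squeezed against one endpoint of the cylinder, and Corollary \ref{c.thetarelatecylinder} turns this into a cylinder statement exactly as in Lemma \ref{l.subsets}. I will take $N=N(\delta)$ to be the same integer as in Lemma \ref{l.subsets} (or, if a factor of $2$ is needed, that integer computed with $2\delta$ in place of $\delta$, which only changes $C_2$).

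\textbf{Step 1: locate the endpoint.} Since $\omega\in D_n(\delta)$, there is $k\in\ZZ$ with $|\theta_n(\omega)-\frac{\pi}{2}-k\pi|<\delta$. By Lemma \ref{l.key}, $\theta_n$ is monotone increasing on $[0;\underline{i}]^+$, so $\theta_n(m^{\underline{i}})\le\theta_n(\omega)\le\theta_n(M^{\underline{i}})$. Suppose first that $\theta_n(m^{\underline{i}})\le \frac{\pi}{2}+k\pi\le\theta_n(M^{\underline{i}})$. Then, by the definition of $\C_{n,\underline{i}}$, there is $\beta\in\C_{n,\underline{i}}$ with $k_\beta=k$. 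The first containment of Lemma \ref{l.subsets} gives $|\theta_n(\omega)-\theta_n(\beta)|<2\delta$. To conclude that $\omega\in B_n(\delta)$ we would want $\delta$ rather than $2\delta$. I will handle this by noting that $\theta_n(\beta)=\frac{\pi}{2}+k\pi$ exactly in this case, by monotonicity and compactness of the range (the infimum is attained at the crossing). This gives $|\theta_n(\omega)-\theta_n(\beta)|<\delta$, contradicting $\omega\notin B_n(\delta)$. If exact attainment fails because of jumps of $\theta_n$, I would instead enlarge the constant, replacing $\delta$ by $2\delta$ in the definition of $N$. Hence $\frac{\pi}{2}+k\pi$ lies strictly outside $[\theta_n(m^{\underline{i}}),\theta_n(M^{\underline{i}})]$. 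Say it lies above $\theta_n(M^{\underline{i}})$; the other case is symmetric. Then
\[0\le\theta_n(M^{\underline{i}})-\theta_n(\omega)<\tfrac{\pi}{2}+k\pi-\theta_n(\omega)<\delta.\]

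\textbf{Step 2: convert to a cylinder.} Both $\omega$ and $M^{\underline{i}}$ lie in $[0;\underline{i}]^+\in\P_n$. Corollary \ref{c.thetarelatecylinder} then gives
\[d\big(T^n_+\omega,T^n_+M^{\underline{i}}\big)<\Big(\frac{20\delta}{H}\Big)^{1/\alpha}\le\Big(\frac{40\delta}{H}\Big)^{1/\alpha}.\]
Arguing as in the proof of Lemma \ref{l.subsets}, this forces $N(T^n_+\omega,T^n_+M^{\underline{i}})\ge N$. Since $\omega$ and $M^{\underline{i}}$ also share their first $n$ letters, $\omega$ agrees with $M^{\underline{i}}$ in coordinates $0,\dots,n+N-1$. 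That is, $\omega\in[0;\underline{M^{\underline{i}}}_{n+N}]^+$. In the symmetric case $\omega\in[0;\underline{m^{\underline{i}}}_{n+N}]^+$. The two cylinders are disjoint or equal, which gives the stated union.

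\textbf{Main obstacle.} The delicate point is Step 1: ruling out that $\omega$ is $\delta$-close to a $\frac{\pi}{2}+\pi\ZZ$ value lying inside the range of $\theta_n$ on the cylinder while $\omega\notin B_n(\delta)$. This is where the precise definitions of $\C_{n,\underline{i}}$ (through the infimum) and $B_n(\delta)$ must match up. Any loss of a factor $2$ there should be absorbed into the choice of $N$ via the admissible range of $\delta$, without affecting the estimate $p^N<C_2\delta^{-\ln p}$ up to constants.
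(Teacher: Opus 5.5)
Your route is the same as the paper's: put the nearby crossing $\frac{\pi}{2}+k\pi$ outside $[\theta_n(m^{\underline{i}}),\theta_n(M^{\underline{i}})]$, deduce that $\theta_n(\omega)$ is $\delta$-close to an endpoint value, and convert this with Corollary~\ref{c.thetarelatecylinder} and the $N$ of Lemma~\ref{l.subsets}. Your Step~2 is fine. The gap is in Step~1, at exactly the point you flagged, and neither of your patches closes it.

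Compactness only says that the infimum defining $\C_{n,\underline{i}}$ is attained, not that it equals $0$. The cylinder $[0;\underline{i}]^+$ is totally disconnected. By Corollary~\ref{c.thetarelatecylinder}, dictionary-adjacent points have $\theta_n$-values a definite distance apart; in the full shift, e.g.\ $\underline{i}\,\underline{w}\,0\,(\ell-1)^\infty$ and $\underline{i}\,\underline{w}\,1\,0^\infty$. So the image of $\theta_n$ has gaps and there is no intermediate value theorem.

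Nothing then rules out adjacent $\beta<\omega$ in the cylinder with $\theta_n(\beta)=\frac{\pi}{2}+k\pi-a$, $\theta_n(\omega)=\frac{\pi}{2}+k\pi+b$ and $a<b<\delta\le a+b$. In that case:
\begin{enumerate}
\item $\beta$ is the point of $\C_{n,\underline{i}}$ for this crossing;
\item $\omega\in D_n(\delta)$;
\item $|\theta_n(\omega)-\theta_n(\beta)|=a+b\ge\delta$, so $\omega\notin B_n(\delta)$;
\item $\omega$ can sit deep inside the cylinder, away from both $m^{\underline{i}}$ and $M^{\underline{i}}$.
\end{enumerate}
For $n=0$ this is just $t$ lying slightly off-centre in a gap of $v(\Omega^+)$ whose $\theta$-length is in $[\delta,2\delta)$. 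A Cantor-type monotone anti-H\"older $v$ readily produces this.

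Your fallback of computing $N$ with $2\delta$ does not help. Membership in $B_n(\delta)$ does not depend on $N$, and such an $\omega$ is simply not in either endpoint cylinder. So the conclusion ``hence $\frac{\pi}{2}+k\pi$ lies strictly outside the range'' is unjustified. The containment as stated, with the $N$ of Lemma~\ref{l.subsets}, can fail.

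The paper's proof has the same hole. Its displayed identification of $(D_n(\delta)\setminus B_n(\delta))\cap[0;\underline{i}]^+$ with the two outer $\delta$-sets silently assumes that every point $\delta$-close to an interior crossing lies in $B_n(\delta)$.

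The clean repair is to enlarge the radius of $B_n$, not $N$:
\begin{enumerate}
\item By the first containment of Lemma~\ref{l.subsets}, any $\omega\in[0;\underline{i}]^+$ that is $\delta$-close to an interior crossing $\frac{\pi}{2}+k_\beta\pi$ satisfies $|\theta_n(\omega)-\theta_n(\beta)|<2\delta$. That is, $\omega\in B_n(2\delta)$.
\item The second containment of that lemma already puts this $2\delta$-set inside $[0;\underline{\beta}_{n+N}]^+$, with the same $N=N(\delta)$.
\item With this change your argument proves $(D_n(\delta)\setminus B_n(2\delta))\cap[0;\underline{i}]^+\subset[0;\underline{m^{\underline{i}}}_{n+N}]^+\cup[0;\underline{M^{\underline{i}}}_{n+N}]^+$.
\item The proof of Lemma~\ref{l.thetacylinder} uses only these cylinder containments, so $\mu^+(B_n(2\delta))<C\delta^{-\ln p}$ as well.
\end{enumerate}
This is all that Corollary~\ref{c.measureDN} requires.
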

\begin{proof}
    Let $\delta\in \Big(0, \min\{\frac{\pi}{8}, \frac{H}{5^\alpha \cdot 40}\}\Big)$ and pick $N$ to be the same as in Lemma \ref{l.subsets}. Consider any $n\in \NN$ and any $\underline{i}\in \A^n$. If $Card(\C_n\cap [0;\underline{i}]^+) = k$ for some $k\in \NN$, then by monotonicity there exists some $\alpha\in \NN$ such that
    \begin{equation}\label{e.mMbounds}
        \frac{\pi}{2} + \alpha\pi < \theta_n(m^{\underline{i}}) \leq \theta_n(\omega) \leq \theta_n(M^{\underline{i}}) < \frac{\pi}{2} + (\alpha + k+1)\pi
    \end{equation}
    for all $\omega\in [0;\underline{i}]^+$. Let $\eta_1 = \delta + (\frac{\pi}{2} + \alpha\pi - \theta_n(m^{\underline{i}}))$, $\eta_2 = \delta + (\theta_n(M^{\underline{i}})- \frac{\pi}{2} - (\alpha+ k+ 1) \pi)$.  Assume that $\eta_1$ and $\eta_2$ are positive. By (\ref{e.mMbounds}) and our choice of $\eta_1$ and $\eta_2$, the following holds:
    \begin{align*}
        \Bigg(D_n(\delta) \setminus& B_n(\delta)\Bigg) \cap [0;\underline{i}]^+ \\
        &= \{\omega: |\theta_n(\omega) - \frac{\pi}{2} - \alpha\pi|<\delta\} \sqcup \{\omega: |\theta_n(\omega) - \frac{\pi}{2} - (\alpha+ k+ 1)\pi|<\delta\}\\
        &\subset  \{\omega: |\theta_n(\omega) - \theta_n(m^{\underline{i}})|<\eta_1\} \sqcup \{\omega: |\theta_n(\omega) -\theta_n(M^{\underline{i}})|<\eta_2\}
    \end{align*}
    By the same argument in Lemma \ref{l.subsets},
    \begin{align*}
         \{\omega: |\theta_n(\omega) - \theta_n(m^{\underline{i}})|<\eta_1\} &\subset [0;\underline{m^{\underline{i}}}_{n+N}]^+,\\
         \{\omega: |\theta_n(\omega) -\theta_n(M^{\underline{i}})|<\eta_2\} &\subset [0;\underline{M^{\underline{i}}}_{n+N}]^+.
    \end{align*}
    Putting everything together, we have
    \[\Bigg(D_n(\delta) \setminus B_n(\delta)\Bigg) \cap [0;\underline{i}]^+ \subset [0;\underline{m^{\underline{i}}}_{n+N}]^+ \sqcup [0;\underline{M^{\underline{i}}}_{n+N}]^+.\]
    If $\eta_1$ and $\eta_2$ were negative, then 
    \[\Bigg(D_n(\delta) \setminus B_n(\delta)\Bigg) \cap [0;\underline{i}]^+ = \varnothing.\] 
    If only one $\eta_j$ was positive, then $\Big(D_n(\delta)\setminus B_n(\delta)\Big)\cap [0;\underline{i}]^+$ is a subset of either $[0;\underline{m^{\underline{i}}}_{n+N}]^+$ or $[0;\underline{m^{\underline{i}}}_{n+N}]^+$, depending on which $\eta_j$ was positive.
\end{proof}

\begin{corollary}\label{c.measureDN}
    For any $\delta\in \Big(0, \min\{\frac{\pi}{8}, \frac{H}{5^\alpha \cdot 40}\}\Big)$,
    \[\mu^+(D_n(\delta)) < C\delta^{-\ln p}\]
    for all $n\in \NN$.
\end{corollary}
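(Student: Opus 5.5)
We split $D_n(\delta)$ as $B_n(\delta) \sqcup \big(D_n(\delta)\setminus B_n(\delta)\big)$ and bound each piece separately. The first piece is handled by Lemma \ref{l.thetacylinder}, which gives $\mu^+(B_n(\delta)) < C\delta^{-\ln p}$ directly.

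For the second piece, fix $\delta$ in the admissible range and let $N = N(\delta)$ be as in Lemma \ref{l.subsets}. Decompose $\Omega^+ = \bigsqcup_{\underline{i}\in\A^n}[0;\underline{i}]^+$. Lemma \ref{l.Dntocylinder} then gives
\[
\mu^+\big(D_n(\delta)\setminus B_n(\delta)\big) \le \sum_{\underline{i}\in\A^n}\Big(\mu^+([0;\underline{m^{\underline{i}}}_{n+N}]^+) + \mu^+([0;\underline{M^{\underline{i}}}_{n+N}]^+)\Big).
\]
Each cylinder $[0;\underline{m^{\underline{i}}}_{n+N}]^+$ is $[0;\underline{i}]^+$ refined $N$ more times along the single word $0\cdots0$ (and along $(\ell-1)\cdots(\ell-1)$ for $M^{\underline{i}}$). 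Iterating (\ref{e.decayrate}) $N$ times therefore gives
\[
\mu^+([0;\underline{m^{\underline{i}}}_{n+N}]^+) \le p^N \mu^+([0;\underline{i}]^+),
\]
and the same bound holds for $M^{\underline{i}}$. This iteration needs no bounded distortion beyond Lemma \ref{l.boundratio}. Summing over $\underline{i}\in\A^n$, and using that these cylinders partition $\Omega^+$, yields a total of at most $2p^N$.

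Lemma \ref{l.subsets} gives $p^N < C_2\delta^{-\ln p}$, so the second piece has measure at most $2C_2\delta^{-\ln p}$. Adding the two bounds gives $\mu^+(D_n(\delta)) < C\delta^{-\ln p}$, with $C$ depending only on $v$ (through $H$ and $\alpha$) and on $\mu$ (through $B$ and $\gamma$), uniformly in $n$.

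The only delicate point is checking that Lemma \ref{l.Dntocylinder} really uses the same $N$ as Lemma \ref{l.subsets}. Its proof enlarges the radii to $\eta_1,\eta_2 \le 2\delta$. That enlargement is harmless because Lemma \ref{l.subsets} already covers the $2\delta$-ball around $\theta_n(\beta)$, and the same Corollary \ref{c.thetarelatecylinder} argument applies around $m^{\underline{i}}$ and $M^{\underline{i}}$.

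I expect the main obstacle to be a degenerate case of the boundary pieces. If the cylinder $[0;\underline{i}]^+$ contains no point of $\C_n$, then the set near $m^{\underline{i}}$ or $M^{\underline{i}}$ might not be captured by (\ref{e.mMbounds}). In that case I would simply observe that $D_n(\delta)\cap[0;\underline{i}]^+$ is then the $\delta$-sublevel near one endpoint value and apply the same Corollary \ref{c.thetarelatecylinder} containment.
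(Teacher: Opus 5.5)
Your proof is correct and follows the paper's argument. You bound $B_n(\delta)$ by Lemma \ref{l.thetacylinder}, cover the remainder by the endpoint cylinders from Lemma \ref{l.Dntocylinder}, and close with $p^N < C_2\delta^{-\ln p}$ from Lemma \ref{l.subsets}. The differences are minor. The paper first factors $\mu^+([0;\underline{m^{\underline{i}}}_{n+N}]^+)$ by bounded distortion into $\gamma\,\mu^+([0;\underline{i}]^+)$ times a length-$(N+1)$ cylinder and then applies the decay bound, whereas you iterate (\ref{e.decayrate}) directly and so avoid the extra factor $\gamma$. Also, the degenerate case you worry about (a cylinder containing no point of $\C_n$) is already covered by (\ref{e.mMbounds}) with $k=0$.
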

\begin{proof}
    By Lemma \ref{l.thetacylinder} and Lemma \ref{l.Dntocylinder} we find:
    \begin{align*}
        \mu^+&(D_n(\delta)) \\
        &< C\delta^{-\ln p} + \sum_{\underline{i}\in \A^n} \Bigg(\mu^+([0;\underline{m^{\underline{i}}}_{n+N}]^+) + \mu^+([0;\underline{M}^{\underline{i}}_{n+N}]^+)\Bigg)\\
        &\leq C\delta^{-\ln p} \\
        &\hspace{4mm}+ \gamma\sum_{\underline{i}\in \A^n} \mu^+([0;\underline{i}]^+)\Bigg(\mu^+([0;m^{\underline{i}}_n\dots m^{\underline{i}}_{n+N}]^+) + \mu^+([0;M^{\underline{i}}_n\dots M^{\underline{i}}_{n+N}]^+)\Bigg)\\
        &\leq C\delta^{-\ln p} + 2C\delta^{-\ln p}\\
        &< C\delta^{-\ln p}.
    \end{align*}
\end{proof}

By a similar argument in \cite{young}, we conclude positive Lyapunov exponents.

\begin{proof}[Proof of Theorem \ref{t.halfPLEpd}]
    Let $\vec{e}_1, \vec{e}_2$ represent the standard basis of $\RR^2$ and 
    \[\Tilde{\Lambda}(T_+\omega^+) = \begin{pmatrix}
        \lambda \sqrt{g(T_+\omega^+, t)} & 0 \\
        0 & \frac{1}{\lambda} \sqrt{\frac{1}{g(T_+\omega^+, t)}}
    \end{pmatrix}.\] 
    Observe 
    \[\lim_{n\rightarrow \infty} \frac{1}{n}\int_{\Omega^+} \log||A_n(\omega^+)|| d\mu^+(\omega^+) \geq \lim_{n\rightarrow \infty} \frac{1}{n}\int_{\Omega^+} \log||A_n(\omega^+)\vec{e}_1|| d\mu^+(\omega^+)\]
    and existence follows from the Osceledets Multiplicative Ergodic Theorem. Let $\omega^+\in \Omega^+$. Then define
    \[\vec{w}_n(\omega^+) = A_n(\omega^+)\vec{e}_1 \text{ and } \vec{v}_n(\omega^+) = R_{\theta(T^n\omega^+)}A_n(\omega^+)\vec{e}_1.\]
    Rewriting, we find
    \[\vec{w}_{n+1}(\omega^+) = \Tilde{\Lambda}(T^{n+1}_+\omega^+) \vec{v}_{n}(\omega^+) \text{ and } \vec{v}_n(\omega^+) = R_{\theta(T^n_+\omega^+)}\vec{w}_n(\omega^+).\]
    The previous line implies
    \[||\vec{v}_n(\omega^+)|| = ||\vec{w}_n(\omega^+)||.\]
    By definition of $\theta_n(\omega^+)$, we find
    \[\vec{v}_n(\omega^+) = \cos\theta_n(\omega^+)||\vec{v}_n(\omega^+)||\vec{e}_1 + \sin\theta_n(\omega^+)||\vec{v}_n(\omega^+)||\vec{e}_2.\]
    Inductively, 
    \begin{align*}
        ||\vec{w}_n(\omega^+)|| &= ||\Tilde{\Lambda}(T^{n}_+\omega^+) \vec{v}_{n-1}||\\
        &\geq ||\Tilde{\Lambda}(T^{n}_+\omega^+)\cos\theta_{n-1}(\omega^+)||\vec{v}_{n-1}(\omega^+)||\vec{e}_1||\\
        &\geq \lambda|\cos\theta_{n-1}(\omega^+)| \cdot ||\vec{v}_{n-1}(\omega^+)||\\
        &= \lambda|\cos\theta_{n-1}(\omega^+)|\cdot ||\vec{w}_{n-1}(\omega^+)||\\
        &\geq \lambda^n \prod_{j=0}^{n-1}|\cos\theta_j(\omega^+)|.
    \end{align*}
    Applying $\log$ on both sides,
    \begin{align*}
        \log||A_n(\omega^+)\vec{e}_1|| &= \log||\vec{w}_n(\omega^+)||\\
        &\geq \log(\lambda^n \prod_{j=0}^{n-1}|\cos\theta_j(\omega^+)|) \\
        &= n\log \lambda + \sum_{j=0}^{n-1} \log|\cos\theta_j(\omega^+)| 
    \end{align*}
    and it follows that
    \begin{align}
        \lim_{n\rightarrow \infty} \int_{\Omega^+}& \frac{1}{n}\log ||A_n(\omega^+)\vec{e}_1|| \mu^+(\omega^+)\nonumber\\
        &\geq \log \lambda + \limsup_{n\rightarrow \infty}\frac{1}{n}\sum_{j=0}^{n-1} \int_{\Omega^+} \log|\cos\theta_j(\omega^+)| d\mu^+(\omega^+).\label{boundthis}
    \end{align}
    From (\ref{boundthis}), it suffices to find a lower bound of $\int_{\Omega^+} \log|\cos\theta_j(\omega^+)| dm(\omega^+)$.
    Fix any $k \in \{0,\dots, n-1\}$. The following estimates will be independent of $k$. Define for all $i\in \NN$,
    \[J_i := \{\omega^+\in \Omega^+: ||\theta_i(\omega^+) - \frac{\pi}{2}||_{\RR\PP^1}< \frac{\delta}{2^i}\}.\]
    By Corollary \ref{c.measureDN},
    \[\mu^+(J_i) < C\Big(\frac{\delta}{2^i}\Big)^{- \ln p}.\]
    A direct computation gives
    \begin{align*}
        \int_{J_0} \log|\cos\theta_k(\omega^+)|d\mu^+(\omega^+) &= \int_{J_0} \log|\sin(\theta_k(\omega^+) - \frac{\pi}{2})|d\mu^+(\omega^+) \\
        &\geq \int_{J_0} \log\frac{2}{\pi}|\theta_k(\omega^+) - \frac{\pi}{2}|d\mu^+(\omega^+) \\
        &= \sum_{i\in \NN} \int_{J_i\setminus J_{i+1}} \log\frac{2}{\pi}|\theta_k(\omega^+) - \frac{\pi}{2}|d\mu^+(\omega^+)\\
        &\geq \sum_{i\in \NN} \int_{J_i\setminus J_{i+1}} \log\Bigg(\frac{2}{\pi}\Big(\frac{\delta}{2^{i+1}}\Big)^{- \ln p}\Bigg)d\mu^+(\omega^+)\\
        &= \sum_{i\in \NN} \mu^+(J_i\setminus J_{i+1}) \log\Bigg(\frac{2}{\pi}\Big(\frac{\delta}{2^{i+1}}\Big)^{- \ln p}\Bigg)\\ 
        &\geq \sum_{i\in \NN}  C\Big(\frac{\delta}{2^{i+1}}\Big)^{- \ln p}\log\Bigg(\frac{2}{\pi}\Big(\frac{\delta}{2^{i+1}}\Big)^{- \ln p}\Bigg)\\
        &= -C\delta^{-\ln p} \sum_{i\in \NN} \frac{\ln p}{2^{-(i+1)\ln p}} \log\Bigg(\Big(\frac{2}{\pi}\Big)^{-\frac{1}{\ln(p)}}\Big(\frac{\delta}{2^{i+1}}\Big)\Bigg) \\
        &= -C\delta^{-\ln p} \sum_{i\in \NN} \frac{i+1}{2^{-(i+1)\ln p}} \log\Bigg(\Big(\frac{2}{\pi}\Big)^{-\frac{i+1}{\ln(p)}}\Big(\frac{\delta^{\frac{1}{i+1}}}{2}\Big)\Bigg) \\
        &\geq -C\delta^{-\ln p}\sum_{i\in \NN} \frac{i}{(2^{-\ln p})^i}\\
        &\geq -C\delta^{-\ln p}.
    \end{align*}
    Consider
    \[J_0^c = \{\omega^+\in \Omega^+: ||\theta_k(\omega^+)-\frac{\pi}{2}||\geq  \delta\}.\]
    It follows that
    \begin{align*}
        \int_{J_0^c} \log|\cos\theta_k(\omega^+)|dm(\omega^+) &= \int_{J_0^c} \log|\sin(\theta_k(\omega^+) - \frac{\pi}{2})|d\mu^+(\omega^+)\\
        &\geq \int_{J_0} \log\frac{2}{\pi}|\theta_k(\omega^+) - \frac{\pi}{2}|d\mu^+(\omega^+)\\
        &\geq \int_{J_0} \log(\frac{2}{\pi}\Big(\frac{\delta}{2^i}\Big)^{- \ln p})d\mu^+(\omega^+)\\
        &> C\log\delta^{-\ln p}.
    \end{align*}
    Pick $m\in \NN$ such that $\frac{1}{m} < \delta^{-\ln p} < \Big(\min\{\frac{\pi}{8}, \frac{H}{5^\alpha\cdot 40}\}\Big)^{-\ln p}$, which provides
    \begin{align*}
        \int_{\Omega^+} \log|\cos\theta_k(\omega^+)|d\mu^+(\omega^+) &= \int_{J_0^c} \log|\cos\theta_k(\omega^+)|d\mu^+(\omega^+)\\
        &\hspace{8mm}+ \int_{J_0} \log|\cos\theta_k(\omega^+)|d\mu^+(\omega^+)\\
        &> -\frac{1}{m}C - C\log m\\
        &> -C.
    \end{align*}
    Therefore, 
    \[\frac{1}{n}\sum_{j=0}^{n-1} \int_{\Omega^+} \log|\cos\theta_j(\omega^+)| d\mu^+(\omega^+) > -C\]
    for all $n\in \NN$. The preceding estimate implies
    \[\lim_{n\rightarrow \infty} \frac{1}{n}\int_{\Omega^+} \log ||A_n(\omega^+)\vec{e}_1|| d\mu^+(\omega^+) > \log \lambda - C_0 = \log\lambda - C_0,\]
    which completes the proof of Theorem \ref{t.halfPLEpd}.
\end{proof}

\subsection{Full-Line}\hypertarget{fullline}{}

To attain a full-line result, we must first reduce the full-line to the half-line. We do so by thinking of the local unstable sets as a map on $\Omega^-$. That is,
\[W^u_{loc}(\omega^-) = \{\omega\in \Omega: \pi^-(\omega) = \omega^-\},\]
which provides the following partition
\[\Omega = \bigsqcup_{\omega^- \in\Omega^-} W^u_{loc}(\omega^-).\]
The partition leads to a disintegration of $\mu$ along the local unstable sets where existence follows from Rohklin's disintegration theorem. A disintegration along local unstable sets is a family of probability measures $\{\mu_{\omega^-}^u\}_{\omega^-\in\Omega^-}$ where each is supported on $W^u_{loc}(\omega^-)$ and satisfies
\[\mu(D) = \int_{\Omega^-}\mu_{\omega^-}^u(D) d\mu^-(\omega^-)\]
for each measurable $D\subset \Omega$. The disintegration implies that given $f\in L^1(\mu)$ one obtains the following equality:
\[\int_\Omega f(\omega)d\mu(\omega) = \int_{\Omega^-} \int_{W^u_{loc}(\omega^-)} f(\omega)d\mu^u_{\omega^-}(\omega) d\mu^-(\omega^-).\]
In particular, if $f(\omega) = \frac{1}{n}\log||A_n(\omega)||$:
\begin{align}
    \int_\Omega &\frac{1}{n}\log||A_n(\omega)||d\mu(\omega) \label{reducefull}\\
    &= \int_{\Omega^-} \Bigg(\int_{W^u_{loc}(\omega^-)}\frac{1}{n}\log||A_n(\omega)||d\mu^u_{\omega^-}(\omega) \Bigg) d\mu^-(\omega^-).\nonumber
\end{align}

For each $\omega^-\in \Omega^-$, define
\begin{align*}
    I_{\omega^-}: \Omega^+_{\omega^-_0} &\rightarrow W^u_{loc}(\omega^-)\\
    \omega^+ &\mapsto(\omega^-, \omega^+)
\end{align*}
which is an isomorphism. From the isomorphism it is apparent that the inner integral from (\ref{reducefull}) is essentially the half-line. At this point, one would like to directly apply the half-line setting, but it is not necessarily true that the disintegrated measures have $T_+$-invariance. Thus, the half-line theorem cannot be applied. To resolve the predicament, we show that the disintegrated measures $\mu^u_{\omega^-}$ are equivalent to a $T_+$-invariant measure, in particular $(I_{\omega^-})_*\mu^+_j$. Moreover, the Lebesgue-Radon-Nikodym derivative between the two is precisely $\psi$ from local product structure of $\mu$. 

\begin{lemma}\label{l.IpushBD}
    For each $j\in \A$, if $\mu^+_j$ has bounded distortion property, then $(I_{\omega^-})_*\mu^+_j$ has bounded distortion property for all $\omega^- \in \Omega^-_j$.
\end{lemma}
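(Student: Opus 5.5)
The plan is to transfer the bounded distortion inequality for $\mu^+_j$ directly through the map $I_{\omega^-}$, since $I_{\omega^-}$ is a bijection that sends cylinders to cylinders. Fix $j\in\A$ and $\omega^-\in\Omega^-_j$, and write $\nu=(I_{\omega^-})_*\mu^+_j$, a measure supported on $W^u_{loc}(\omega^-)$. The relevant cylinders are those in the local unstable set, namely $[n;j_0\dots j_k]\cap W^u_{loc}(\omega^-)$ with $n\geq 0$, exactly as in Lemma \ref{l.resord}. For cylinders reaching into negative coordinates, the coordinates $\leq 0$ are frozen to $\omega^-$, so the cylinder either contains all of $W^u_{loc}(\omega^-)$ in those coordinates or misses it. In the latter case nothing needs checking; in the former case we reduce to the part of the cylinder with $n\geq 0$.

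\textbf{Step 1: preimages of cylinders.} For $n\geq 0$ we have
\[
I_{\omega^-}^{-1}\bigl([n;j_0\dots j_k]\cap W^u_{loc}(\omega^-)\bigr)=[n;j_0\dots j_k]^+\cap\Omega^+_{j},
\]
because $I_{\omega^-}(\omega^+)=(\omega^-,\omega^+)$ leaves the nonnegative coordinates unchanged. The compatibility at coordinate $0$ holds since $\omega^-_0=j=\omega^+_0$, and the transition rule for words of length $2$ (our standing assumption on $\F$) means no constraint couples the negative and positive halves except through coordinate $0$. Hence
\[
\nu\bigl([n;\underline{j}]\cap W^u_{loc}(\omega^-)\bigr)=\mu^+_j([n;\underline{j}]^+).
\]
The same holds for intersections of two cylinders, since $I_{\omega^-}^{-1}$ commutes with intersections.

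\textbf{Step 2: distortion ratio.} Take cylinders $[n;j_0\dots j_k]$ and $[l;i_0\dots i_m]$ with $l>n+k$ whose intersection meets $W^u_{loc}(\omega^-)$. By Step 1, the ratio
\[
\frac{\nu(\cdot\cap\cdot)}{\nu(\cdot)\,\nu(\cdot)}
\]
equals the corresponding ratio for $\mu^+_j$ evaluated on the preimage cylinders, and the nonempty-intersection hypothesis carries over as well. The bounded distortion of $\mu^+_j$ then gives the bounds $\gamma^{-1}$ and $\gamma$ with the same $\gamma$, uniformly in $\omega^-$.

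\textbf{Main obstacle.} The delicate point is that $\mu^+_j=\mu^+|_{[0;j]^+}$ is a restriction, so it is not clear what "bounded distortion of $\mu^+_j$" means for cylinders starting at $n=0$ with $j_0\neq j$, or for the normalization. I would handle this by noting that all relevant cylinders lie inside $[0;j]^+$. If a cylinder starts at $n>0$, intersect it with $[0;j]^+$. The factor by which the two sides of the inequality then differ is controlled by $\mu^+([0;j]^+)$ through the bounded distortion of $\mu^+$, which is available by Lemma \ref{l.mupmbdp}. This may worsen the constant to something like $\gamma^{3}/\min_{a}\mu^+([0;a]^+)$, which is still uniform, so it suffices. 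This bookkeeping is the only nontrivial part; the rest is the formal pushforward identity of Step 1.
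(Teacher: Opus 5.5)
Your Steps 1--2 are the paper's proof. The paper simply reruns the argument of Lemma \ref{l.mupmbdp} with $I_{\omega^-}$, $(I_{\omega^-})_*\mu^+_j$, $\mu^+_j$ in place of $\pi^+$, $\mu^+$, $\mu$: cylinders pull back to cylinders, so the distortion ratio is unchanged and the same $\gamma$ works. Your extra bookkeeping is not needed, since the lemma assumes bounded distortion of $\mu^+_j$ outright, but it makes explicit two points the paper passes over: negative-index cylinders, and the fact that the property must be read relative to the support $\Omega^+_j$.
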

\begin{proof}
    Use the same argument as Lemma \ref{l.mupmbdp} and replace $\pi^+$, $\mu^+$, $\mu$ with $I_{\omega^-}$, $(I_{\omega^-})_*\mu^+_j$, $\mu^+_j$.
\end{proof}

\begin{lemma}\label{l.ujbdp}
    For each $j\in \A$, 
    \[d\mu^u_{\omega^-} = \psi d((I_{\omega^-})_*\mu^+_j)\]
    holds for $\mu^-_j$ almost every $\omega^- \in \Omega^-_j$ and $\psi$ is the same function from local product structure of $\mu$.
\end{lemma}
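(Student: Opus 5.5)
The plan is to identify the conditional measures directly, using the local product structure from Lemma \ref{l.BDLP_LRNbound} and the essential uniqueness in Rohklin's disintegration theorem. Fix $j\in\A$ and work on $\Omega_j = [0;j]$. By Lemma \ref{l.BDLP_LRNbound}, $d\mu_j = \psi\, d(\mu^-_j\times\mu^+_j)$, where we identify $\Omega_j$ with $\Omega^-_j\times\Omega^+_j$ via $\omega\mapsto(\pi^-\omega,\pi^+\omega)$. Under this identification, the map $I_{\omega^-}$ becomes the section $\omega^+\mapsto(\omega^-,\omega^+)$, and the fiber over $\omega^-$ is exactly $W^u_{loc}(\omega^-)$.

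First, for any measurable $D\subset\Omega_j$, Fubini's theorem gives
\[\mu(D) = \int_{\Omega^-_j}\Bigg(\int_{\Omega^+_j}\mathbf{1}_D(\omega^-,\omega^+)\,\psi(\omega^-,\omega^+)\,d\mu^+_j(\omega^+)\Bigg)d\mu^-_j(\omega^-).\]
Rewriting the inner integral as an integral over $W^u_{loc}(\omega^-)$ against $(I_{\omega^-})_*\mu^+_j$, it becomes $\int_D \psi\, d((I_{\omega^-})_*\mu^+_j)$. Set $\nu_{\omega^-} := \psi\, d((I_{\omega^-})_*\mu^+_j)$. The bounds $\kappa^{-1}\le\psi\le\kappa$ make $\nu_{\omega^-}$ a finite measure supported on $W^u_{loc}(\omega^-)$, and $\omega^-\mapsto\nu_{\omega^-}(D)$ is measurable by Fubini. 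Note that $\mu^-=\pi^-_*\mu$ restricted to $\Omega^-_j$ is $\mu^-_j$, so the outer integral matches the disintegration formula on $\Omega_j$.

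Second, we check normalization. Taking $D=\Omega_j$ gives $\int \nu_{\omega^-}(W^u_{loc}(\omega^-))\,d\mu^-_j=\mu(\Omega_j)=\mu^-_j(\Omega^-_j)$. Applying the same identity to $D=(\pi^-)^{-1}(E)$ for arbitrary measurable $E\subset\Omega^-_j$ shows $\int_E \nu_{\omega^-}(W^u_{loc}(\omega^-))\,d\mu^-_j=\mu^-_j(E)$. Hence $\nu_{\omega^-}$ is a probability measure for $\mu^-_j$-a.e. $\omega^-$.

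Third, we invoke uniqueness. The family $\{\nu_{\omega^-}\}$ is a measurable family of probability measures, each supported on its atom of the partition $\{W^u_{loc}(\omega^-)\}$, and it disintegrates $\mu|_{\Omega_j}$ over $\mu^-_j$. By the essential uniqueness in Rohklin's theorem, $\mu^u_{\omega^-}=\nu_{\omega^-}$ for $\mu^-_j$-a.e. $\omega^-\in\Omega^-_j$. This yields $d\mu^u_{\omega^-} = \psi\, d((I_{\omega^-})_*\mu^+_j)$ with the same $\psi$.

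The main obstacle is the measure-theoretic bookkeeping. We must confirm that $\psi$ is jointly measurable, so that Fubini applies to $\psi\in L^1(\mu^-_j\times\mu^+_j)$; this is given by Lemma \ref{l.BDLP_LRNbound}. We must also confirm that the partition into local unstable sets is measurable, which holds because it is the partition into fibers of the continuous map $\pi^-$, so Rohklin's uniqueness applies. Everything else is a direct rewriting of the product-structure identity.
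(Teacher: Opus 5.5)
Your proposal is correct and follows the same route as the paper. Both arguments use Fubini to rewrite the local product structure identity as an integral over $\Omega^-_j$ of the fiber measures $\nu_{\omega^-}=\psi\,d((I_{\omega^-})_*\mu^+_j)$, and then identify $\nu_{\omega^-}$ with $\mu^u_{\omega^-}$ by uniqueness in Rohklin's theorem. Your explicit check that $\nu_{\omega^-}$ is a probability measure for $\mu^-_j$-a.e. $\omega^-$, together with your measurability remarks, fills in details the paper leaves implicit.
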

\begin{proof}
    Fix any $j\in \A$. Local product structure of $\mu$ implies
    \[\mu_j(E) = \int_{\Omega^-_j}\int_{\Omega^+_j} \psi(\omega^-, \omega^+) \chi_E d\mu^+_j(\omega^+) d\mu^-_j(\omega^-),\]
    for any measurable set $E\subset \Omega_j$. On the other hand, by Rohklin's Disintegration Theorem
    \[\mu_j(E) = \int_{\Omega^-_j} \mu^u_{\omega^-,j}(E) d\mu^-_j = \int_{\Omega^-_j} \int_{W^u_{loc}(\omega^-)} \chi_E d\mu^u_{\omega^-} d\mu^-_j.\]
    Consider the measure $(I_{\omega^-})_*\mu^+_j$, as in Lemma \ref{l.IpushBD}. It follows that
    \begin{align*}
        \int_{\Omega^-_j}\int_{\Omega^+_j}& \psi(\omega^-, \omega^+) \chi_E d\mu^+_j(\omega^+) d\mu^-_j(\omega^-) \\
        &= \int_{\Omega^-_j}\int_{W^u_{loc}(\omega^-)} \psi(\omega^-, \omega^+) \chi_E d((I_{\omega^-})_*\mu^+_j)(\omega^+) d\mu^-_j(\omega^-).
    \end{align*}
    Notice 
    \[\nu_{\omega^-}: E\mapsto \int_{W^u_{loc}(\omega^-)} \psi(\omega^-,\cdot) \chi_E(\omega^-,\cdot) d((I_{\omega^-})_*\mu^+_j)(\omega^+)\]
    is a measure. The two preceding equations leads to
    \[\mu_j(E) = \int_{\Omega^-_j} \nu_{\omega^-}(E) d\mu^-.\]
    Thus, $\nu_{\omega^-} = \mu^u_{\omega^-}$ for $\mu^-_j$ almost every $\omega^- \in \Omega^-_j$, by uniqueness of Rohklin's Disintegration Theorem. Hence, the following equality holds
    \[d\mu^u_{\omega^-} = \psi d((I_{\omega^-})_*\mu^+_j).\]
\end{proof}

\begin{corollary}\label{c.plestable}
    Let $v$ be uniformly anti $\alpha$-H\"older, continuous, bounded, and monotone with respect to dictionary order all on $W^u_{loc}(\omega^-)$ for all $\omega^-\in \Omega^-$. Then there exists a $C_0 > 0$ and $\kappa = \kappa(\mu)\geq 1$ such that for any $\lambda>0$ and any $\omega^-\in \Omega^-$, we have
    \[\liminf_{n\rightarrow \infty} \frac{1}{n} \int_{W^u_{loc}(\omega^-)} \log||A^E_n(\omega)|| d\mu^u_{\omega^-}(\omega) > \kappa^{-1}\log \lambda - C_0\]
    for all $E\in \RR$.
\end{corollary}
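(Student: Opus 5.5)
The plan is to transfer the half-line estimate behind Theorem \ref{t.halfPLEpd} to each local unstable set, paying a factor $\kappa$ from Lemma \ref{l.ujbdp}. Fix $j\in\A$ and $\omega^-\in\Omega^-_j$ (it suffices to treat $\mu^-_j$-almost every $\omega^-$; the remaining null set is irrelevant once we integrate over $\Omega^-$ in (\ref{reducefull})). Through $I_{\omega^-}$, the unstable set $W^u_{loc}(\omega^-)$ with dictionary order is identified with $\Omega^+_j$. The potential $v\circ I_{\omega^-}$ is anti $\alpha$-H\"older with the uniform constant $H$, monotone, and bounded. The shift satisfies $T^n\circ I_{\omega^-} = I_{T^n\omega^-}\circ T^n_+$ on cylinders, so the cocycle along $W^u_{loc}(\omega^-)$ is a half-line type cocycle. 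Its sampling function is $v$ evaluated on the shifted unstable sets. These differ with $n$, but all satisfy the same uniform anti H\"older, monotonicity and bound hypotheses.

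First I would redo polar decomposition (Section 2) for this family, reducing to the cocycle $A(\cdot,t,\lambda)$ with $t=E/\lambda\in\I$; the case $|t|$ large is trivial since the potential is then a small perturbation. Next I would check that Lemma \ref{l.key} still applies with partitions $\P_n$ given by cylinders $[0;\underline i]^+\subset\Omega^+_j$. Its proof only uses monotonicity of $v\circ T^{n}$ on $S\in\P_n$ (Corollary \ref{c.compmonotone}) and the bound $\|v\|_\infty$, so it holds uniformly in $\omega^-$. This gives Corollary \ref{c.thetarelatecylinder} uniformly. Lemmas \ref{l.upperboundofbadpoints}, \ref{l.subsets}, \ref{l.Dntocylinder} then go through unchanged, since they are combinatorial statements about $\theta_n$ on cylinders.

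The measure estimates are where the disintegrated measure enters. By Lemma \ref{l.ujbdp} and Lemma \ref{l.BDLP_LRNbound}, $\kappa^{-1}(I_{\omega^-})_*\mu^+_j\le \mu^u_{\omega^-}\le\kappa (I_{\omega^-})_*\mu^+_j$. Lemma \ref{l.IpushBD} shows that $(I_{\omega^-})_*\mu^+_j$ has bounded distortion. Therefore Lemmas \ref{l.boundratio}, \ref{l.boundsum}, \ref{l.thetacylinder} and Corollary \ref{c.measureDN} give $\mu^u_{\omega^-}(D_n(\delta))<\kappa C\delta^{-\ln p}$, with $C$ independent of $\omega^-$. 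In the final Young-type argument I would use the lower bound $\log\|A_n\vec e_1\|\ge n\log\lambda+\sum_k\log|\cos\theta_k|$. The negative term $\int\log|\cos\theta_k|\,d\mu^u_{\omega^-}$ is bounded below by $-\kappa C$ using the upper comparison. The positive $n\log\lambda$ part is integrated against the probability measure $\mu^u_{\omega^-}$.

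Finally I would account for the $\kappa^{-1}$ in front of $\log\lambda$, which I expect to be the main obstacle. Since $\mu^u_{\omega^-}$ is not $T_+$-invariant, the natural route is to bound the integral against $\mu^u_{\omega^-}$ below by $\kappa^{-1}$ times the integral against the invariant $(I_{\omega^-})_*\mu^+_j$. This bound is valid only for the nonnegative integrand $\log\|A_n\|$. The plan is to apply the half-line bound $\log\lambda-C_1$ to the invariant measure, which yields $\kappa^{-1}\log\lambda-\kappa^{-1}C_1$, and then take $\liminf$. The care needed is to ensure that all constants are uniform in $\omega^-$ and in $E$, absorbing them into $C_0$.
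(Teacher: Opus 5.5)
Your final step is exactly the paper's argument: use $\psi\ge\kappa^{-1}$ on the nonnegative integrand $\log\|A_n\|$ to pass from $\mu^u_{\omega^-}$ to $(I_{\omega^-})_*\mu^+_j$, then invoke the half-line bound and take $\liminf$. Your rerun of Lemma~\ref{l.key} and of the measure estimates, uniformly in $\omega^-$, only makes explicit what the paper leaves implicit when it treats the sampling along $W^u_{loc}(\omega^-)$ as a half-line cocycle. Note also that your third paragraph already gives $\log\lambda-\kappa C$ directly, by applying the upper comparison $\psi\le\kappa$ to the nonpositive terms $\log|\cos\theta_k|$; that implies the stated bound after enlarging $C_0$, so the $\kappa^{-1}$ factor is not actually an obstacle.
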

\begin{proof}
    By the isomorphism $I_{\omega^-}$, Lemma \ref{l.BDLP_LRNbound}, and Lemma \ref{l.ujbdp}, one has
    \begin{align*}
        \liminf_{n\rightarrow \infty}& \frac{1}{n} \int_{W^u_{loc}(\omega^-)} \log||A^E_n(\omega)|| d\mu^u_{\omega^-}(\omega)\\
        &=  \liminf_{n\rightarrow \infty} \frac{1}{n} \int_{W^u_{loc}(\omega^-)} \psi(\omega)\log||A^E_n(\omega)|| d(I_{\omega^-})_*\mu^+_j(\omega) \\
        &\geq  \kappa^{-1} \liminf_{n\rightarrow \infty} \frac{1}{n} \int_{\Omega^+_{\omega^-_0}} \log||A^E_n(I_{\omega^-}(\omega^+))|| d\mu^+_j(\omega^+) \\
        &= \kappa^{-1} \liminf_{n\rightarrow \infty} \frac{1}{n} \int_{\Omega^+_{\omega^-_0}} \log||A^E_n(\omega^-,\omega^+)|| d\mu^+_j(\omega^+) \\
        &> \kappa^{-1}\log\lambda - \kappa^{-1} C_1\\
        &= \kappa^{-1}\log\lambda - C_0
    \end{align*}
    where $C_0 = \kappa^{-1}C_1$.
\end{proof}

In Corollary \ref{c.plestable}, we consider the $\liminf$ because $\mu^u_{\omega^-}$ does not have $T$-invariance, so the limit may not exist. With Corollary \ref{c.plestable}, we can conclude Theorem \ref{t.fullPLE}.

\begin{proof}[Proof of Theorem \ref{t.fullPLE}]
    Combining (\ref{reducefull}), Fatou Lemma, and Corollary \ref{c.plestable}, one can conclude positivity as follows.
    \begin{align*}
        \lim_{n\rightarrow \infty}& \frac{1}{n}\int_\Omega \log||A_n(\omega)||d\mu(\omega) \\
        &\geq \liminf_{n\rightarrow \infty} \int_{\Omega^-} \frac{1}{n}\Bigg(\int_{W^u_{loc}(\omega^-)}\log||A_n(\omega)||d\mu^u_{\omega^-}(\omega) \Bigg) d\mu^-(\omega^-)\\
        &\geq \int_{\Omega^-} \liminf_{n\rightarrow \infty} \frac{1}{n}\Bigg(\int_{W^u_{loc}(\omega^-)}\log||A_n(\omega)||d\mu^u_{\omega^-}(\omega) \Bigg) d\mu^-(\omega^-)\\
        &> \int_{\Omega^-} \Big(\kappa^{-1}\log\lambda - C_0\Big) d\mu^-(\omega^-)\\
        &= \kappa^{-1}\log\lambda - C_0
    \end{align*}
\end{proof}

\section{One Expanding Direction on Tori}\hypertarget{OEDT}{}

In this section, we prove Theorem \ref{t.PLEhd}. In particular, we show that tori of arbitrary integer dimension $d\geq 1$ with a map with at least one expanding direction leads to the Schr\"odinger cocycle admitting positive Lyapunov exponents. We begin with the positive integers $d\geq 2$, as $d=1$ will become a consequence of the argument. The first step is to reduce the dimension of the problem to dimension 1. To do so, we construct a set $Z$ of dimension $d-1$ such that every $z\in Z$ admits a unique line segment $W_z$. The union of all such line segments is equal to the fundamental domain $F$ of $\TT^d$. We provide a geometric picture for $d = 3$.

\begin{figure}[H]
    \centering
    \begin{tikzpicture}[3d view={120}{20}, line join=round, line cap=round, declare function={a=3;}]
    
        \draw[fill=blue!30, opacity=0.6] 
                (3/2,a,0) -- (3/2,0,0) -- (3/2,0,a) -- (3/2,a,a) -- cycle;
        
        \draw[red, thick, ->] (-0.5, 1/2, 1/2) -- (a, 1/2, 1/2) ;
        \draw[red, thick, ->] (-0.5, 1/2, 1) -- (a, 1/2, 1) ;
        \draw[red, thick, ->] (-0.5, 1/2, 3/4) -- (a, 1/2, 3/4) ;
        \draw[red, thick, ->] (-0.5, 1/2, 5/4) -- (a, 1/2, 5/4) ;
        \draw[red, thick, ->] (-0.5, 1/2, 6/4) -- (a, 1/2, 6/4) ;
        \draw[red, thick, ->] (-0.5, 1/2, 7/4) -- (a, 1/2, 7/4) ;
        \draw[dashed, gray] (0,0,0) -- (a,0,0);
        \draw[dashed, gray] (0,0,0) -- (0,a,0);
        \draw[dashed, gray] (0,0,0) -- (0,0,a);
            
        \draw[thick] (a,0,0) -- (a,a,0) -- (0,a,0);
        \draw[thick] (a,0,0) -- (a,0,a) -- (a,a,a) -- (0,a,a) -- (0,0,a);
        \draw[thick] (a,a,0) -- (a,a,a);
        \draw[thick] (0,a,0) -- (0,a,a);
        \draw[thick] (a,0,a) -- (0,0,a);
            
    \end{tikzpicture}
        
    \caption{Geometric picture representing the local unstable leaves (red, $W_z$) and the plane parameterizing the leaves (blue, $Z$) for the matrix $\begin{pmatrix}
        2 & 0 & 0\\
        0 & 1 & 0\\
        0 & 0 & 1
    \end{pmatrix}$ acting on $\TT^3$. Note that the choice of $Z$ is not unique.}
\end{figure}

Begin by considering any positive integer $d\geq 1$ and fix any $M\in G(d,\ZZ)$. Let $u$ denote an associated unit eigenvector to $\beta$. Define the map 
\begin{align*}
    T_M: \RR^d &\rightarrow \RR^d\\
    \omega &\mapsto M\omega
\end{align*}
which, slightly abusing notation, induces a map $T_M: \TT^d\rightarrow \TT^d$. Equip $\TT^d$ with $d$-dimensional Lebesgue measure $\mu$. To construct $Z$ we introduce the following notation
\[\sgn(x) = \begin{cases}
    1 & x>0\\
    0 & x=0\\
    -1 & x<0
\end{cases}\]
and 
\[I_{\sgn(x)} = \begin{cases}
    [0,1) & x = 0, 1\\
    (-1,0] & x = -1
\end{cases}.\]
Let the fundamental domain of $\TT^d$ be
\[F:= \prod_{i=1}^d I_{\sgn(u_i)},\]
which is chosen for convenience in the following lemma. 

\begin{lemma}\label{l.hyperplane}
    Consider a positive integer $d\geq 2$ and assume $u\in\RR^d$ is a unit vector. Then there exists an affine hyperplane $P\subset \RR^d$ such that when we define
    \[Z = P \cap F,\]
    the following statement holds: For every $\omega\in F$, there exists a unique $z\in Z$ such that $\omega\in $Image$(f_z)\cap F$ where
    \[f_z(t) = tu + z.\]
\end{lemma}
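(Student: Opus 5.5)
The plan is to take $P$ to be the linear hyperplane orthogonal to $u$,
\[
P=\{x\in\RR^d:\ x\cdot u=0\},\qquad Z=P\cap F,
\]
and to obtain $z$ from $\omega$ by orthogonal projection along $u$. The choice of fundamental domain $F=\prod_i I_{\sgn(u_i)}$ is what should make this work. For every $\omega\in F$ each coordinate satisfies $\omega_iu_i\ge 0$, so $\omega\cdot u\ge 0$. Hence every point of $F$ lies on the nonnegative side of $P$, and the vertex $0\in Z$ is a common corner of $F$ and $P$.

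\emph{Uniqueness.} Suppose $\omega=f_z(t)=f_{z'}(t')$ with $z,z'\in P$. Then $z-z'=(t'-t)u$. Dotting with $u$ and using $z\cdot u=z'\cdot u=0$ and $\|u\|=1$ gives $t=t'$, and therefore $z=z'$. This step only uses that $u$ is transverse to $P$, so it holds for any affine $P$ not containing the direction $u$.

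\emph{Existence.} Given $\omega\in F$, set $t=\omega\cdot u\ge 0$ and $z=\omega-tu$. Then $z\in P$ and $f_z(t)=\omega\in F$. It remains to check that $z\in F$, that is, $z_i=\omega_i-(\omega\cdot u)u_i\in I_{\sgn(u_i)}$ for each $i$. The plan for this step is as follows.
\begin{enumerate}
\item Coordinates with $u_i=0$ are unchanged, so $z_i=\omega_i\in[0,1)$.
\item For $u_i>0$ we have $z_i\le\omega_i<1$, because the subtracted term is nonnegative. The same argument, with signs reversed, gives $z_i>-1$ when $u_i<0$.
\item The remaining inequality is the sign condition $z_iu_i\ge 0$, i.e.\ $\omega_iu_i\ge(\omega\cdot u)u_i^2$.
\end{enumerate}

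\emph{Main obstacle.} The sign condition in step 3 is where I expect the difficulty. It need not hold for an arbitrary $\omega$ when $d\ge 2$: for instance, it fails whenever $\omega_i=0$ for some $i$ with $u_i\neq 0$ and $\omega\cdot u>0$. So the orthogonal hyperplane may have to be replaced by a better-adapted affine hyperplane.

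What I would try first is the coordinate hyperplane $P=\{x_{i_0}=0\}$, where $i_0$ is chosen with $|u_{i_0}|$ maximal, moving along the line backwards from $\omega$ until it hits $P$. This is the choice that works in the two-dimensional picture of \cite{chiem} and in the figure for $\mathrm{diag}(2,1,1)$. In that case $z_j=\omega_j-(\omega_{i_0}/u_{i_0})u_j$. The step back has length $\omega_{i_0}/u_{i_0}\in[0,1/|u_{i_0}|)$, and the coordinate bounds $z_j\in I_{\sgn(u_j)}$ should be checked using the maximality of $|u_{i_0}|$.

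If a coordinate still exits $F$, I would use the freedom in how the lemma is applied later in Section~\ref{OEDT}: identify $F$ with $\TT^d$ and read membership in $F$ modulo $\ZZ^d$. Reducing $z$ modulo the lattice then keeps $z$ in the image of $P$ in $\TT^d$ and keeps $\omega$ on the projected line $f_z$, while uniqueness is still supplied by the transversality argument above.
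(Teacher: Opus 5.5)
Your uniqueness argument via transversality is correct. The existence part, however, has a genuine gap: neither hyperplane you propose works once $u$ has at least two nonzero coordinates, and the torus fallback does not prove the stated lemma.

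For the coordinate hyperplane, take $F=[0,1)^2$, $u\propto(2,1)$, $i_0=1$, $\omega=(\tfrac12,0)$. Then $z=\omega-\tfrac{\omega_1}{u_1}u=(0,-\tfrac14)\notin F$. Maximality of $|u_{i_0}|$ cannot help, because $z_j=\omega_j-(\omega_{i_0}/u_{i_0})u_j$ has the wrong sign whenever $\omega_j=0\neq u_j$ and $\omega_{i_0}\neq 0$. This is the same failure you found for $u^\perp$.

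The actual constraint on $P$ is the following. Reflect coordinates so that all $u_i\ge 0$ and $F=[0,1)^d$. Suppose $v\in\{0,1\}^d$ has $v_i=0$ and $v_j=1$ for some $i,j$ with $u_i,u_j>0$. Replace each entry $1$ of $v$ by $1-\epsilon$ to get a point of $F$. The line through that point meets $F$ only in a segment of length $O(\epsilon)$ near $v$, so $P$ must contain $v$. For $d=2$ and $u_1u_2\neq 0$ this forces $P$ to be the line through $(\sgn u_1,0)$ and $(0,\sgn u_2)$. That is exactly the paper's choice $P=\{g=1\}$ with $g(x)=\sum_i \sgn(u_i)x_i$, and neither $u^\perp$ nor $\{x_{i_0}=0\}$ is this line.

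The idea you are missing is the paper's:
\begin{itemize}
\item Since $g(u)=\sum_i|u_i|>0$, $g$ increases strictly along each line.
\item When the support of $u$ has two elements, the line enters $F$ where one support coordinate vanishes, so $g<1$ there.
\item It leaves $F$ where one support coordinate has modulus $1$ and the other has become nonzero, so $g>1$ there.
\item Hence the crossing point $z=\omega-t_0u$, with $t_0=(g(\omega)-1)/g(u)$, lies in $F$.
\end{itemize}

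Reading membership modulo $\ZZ^d$ changes the statement: $z$ need not lie on $P$, and $\omega$ need not lie on the straight line $z+\RR u$. It also destroys uniqueness. Transversality gives a single intersection only in $\RR^d$, while the projected line in $\TT^d$ typically returns to the image of $Z$ infinitely often; for a hyperbolic $\mathrm{SL}(2,\ZZ)$ matrix it is dense in $\TT^2$.

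The same vertex argument shows that your ``main obstacle'' is genuine and is not cured by a cleverer $P$. If $d=3$ and (after reflection) all $u_i>0$, then $P$ would have to contain $e_1,e_2,e_3,e_1+e_2,e_1+e_3,e_2+e_3$, which are not coplanar. Letting the coordinates off the support of $u$ vary, the same obstruction appears whenever $u$ has at least three nonzero entries.

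So the lemma as stated fails, for instance, for $u=(1,1,1)/\sqrt3$. This is the expanding eigenvector of the matrix in $G(3,\ZZ)$ with $2$'s on the diagonal and $1$'s elsewhere. The paper's Case 2 step does not really exclude $z_j<0$: $\omega=(0.05,0.95,0.95)$ gives $z_1=0.05-0.95/3<0$.

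At the other extreme, for $u=\pm e_i$ the paper's $P=\{x_i=\pm1\}$ misses $F$ entirely, and your coordinate hyperplane $\{x_i=0\}$ is the right one. In short, the statement holds exactly when $u$ has at most two nonzero entries: with your hyperplane when there is one, and with the paper's when there are two.
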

\begin{proof}
    Without loss of generality, we may assume
    \[u_i = \begin{cases}
        > 0 & i = 1,\dots, k\\
        < 0 & i = k+1, \dots, j\\
        = 0 & i = j+1, \dots, d
    \end{cases}\]
    for $0\leq k,j\leq d$ where $0\leq k + j \leq d$. Note one can permute the $1,\dots, d$ subscripts to match the original vector. 

    Define an affine hyperplane 
    \[P:= \{x\in \RR^d: \sum_{i=1}^k x_i - \sum_{i=k+1}^{k+j} x_i = 1\}.\]
    Let $Z = P\cap F$. We claim this choice of $Z$ suffices. Let $\omega\in F$ and define
    \[g(\omega)= \sum_{i=1}^k \omega_i - \sum_{i=k+1}^{k+j} \omega_i.\]
    Notice that $g(u) \neq 0$, since if $g(u) = 0$, then 
    \[\sum_{i=1}^k \omega_i = \sum_{i=k+1}^{k+j} \omega_i.\]
    The left hand side is strictly positive and the right is strictly negative, which is a contradiction. 

    Define $t_0$ as
    \[t_0 = \frac{g(\omega) - 1}{g(u)}\]
    and $z \in \RR^d$ with components defined as
    \[z_i = \begin{cases}
        \omega_i -t_0u_i & i = 1,\dots, k+j\\
        0 & i = k+j + 1, \dots, d
    \end{cases}.\]
    Observe
    \begin{align*}
        g(z) &= \sum_{i=1}^k z_i - \sum_{i=k+1}^{k+j} z_i\\
        &= \sum_{i=1}^k (\omega_i -tu_i) - \sum_{i=k+1}^{k+j} (\omega_i -tu_i)\\
        &= g(\omega) - tg(u)\\
        &= g(\omega) - (g(\omega)-1)\\
        &= 1,
    \end{align*}
    which implies that $z\in P$. 
    
    It remains to show that $z\in F$, so consider three cases:
    \begin{enumerate}
        \item $\omega\in P$,
        \item $\omega$ satisfies $g(\omega)>1$,
        \item $\omega$ satisfies $g(\omega)<1$.
    \end{enumerate}
    Case 1 is trivial as $z = \omega$. Let us consider case $2$. Assume for a contradiction that there exists a $j = 1,\dots, k$ such that $z_j < 0$. Define $\Tilde{t} = \frac{\omega_j}{u_j}$. Since $z_j < 0$,
    \[z_j = \omega_j - t_0 u_j < 0 \implies \frac{\omega_j}{u_j} < t_0 \implies \Tilde{t} < t_0.\] 
    But $\omega_j - tu_j$ is a decreasing function and $w_j -\Tilde{t}u_j = 0$, so $\Tilde{t}> t_0$, which is a contradiction.
    
    The argument for $i = k+1, \dots, k+j$ is the same as above and there is nothing to show when $i = k+j+1, \dots, d$. Also, notice if $z\in Z$, then $f_z(t_0) = \omega$ and hence $\omega\in$ Image$(f_z)\cap F$. The proof for case 3 has an analogous argument to case 2.
\end{proof}

From this point on we fix $Z$ to represent the affine hyperplane, as in Lemma \ref{l.hyperplane}. Define for each $z\in Z$ a local unstable leaf
\[W_z = \text{Image}(f_z) \cap F.\] 
It follows that
\[F = \bigsqcup_{z\in Z} W_z\]
and after taking the quotient
\[\TT^d = \bigsqcup_{z\in Z} W_z.\]
By Rohklin's disintegration theorem, one attains the equality (\ref{reducefull}), where the conditional measures are normalized Lebesgue measure and denoted as $m_z$ for each $z\in Z$. On this note, it remains to show positive Lyapunov exponents along $W_z$ with the measure $m_z$.

To find a suitable partition, recall that $W_z$ is isometric to a line segment in the direction of $u$ in $\RR^d$, which induces order on $W_z$. First, define a set that contains the endpoints of $W_z$, $\D_{0,z} = \{w_0, w_1\}$. For each $n\in \NN$ we observe that $T^n_MW_z$ is of finite length, since each iterate of $T_M$ increases the length of the line segment by a factor of $\beta$. Consequently, there are a finite number of times the line crosses the fundamental domain after each iteration of $T_M$. Let $\D_{n,z}$ be the collection of all such points in $W_z$ at step $n$, then define the set
\[\D_{n,z}' = \bigcup_{j=0}^n \D_{n,z}.\]
By the argument in \cite[Lemma 2]{chiem} one finds that $Card(\D_{n,z}')<\sum_{j=0}^{n+1} \beta^{k+j+1}$ for some fixed $k\in \NN$ which is chosen to satisfy $2\sqrt{d}(d+2)<\beta^k$. With the induced order, we can label and order all the points in $\D_{n,z}'$ as
\[w_{n,0} < \dots < w_{n,Card(\D_{n,z}')-2}.\]
Define $I_{n,z,j} = [w_{n,j}, w_{n,j+1})$ for $0\leq j\leq Card(\D_{n,z}')-2$. Remark that for every $n\in \NN$, the points $\omega_{n,0}, \omega_{n,Card(\D_{n,z}')-2}$ are the end points of $W_z$. For each $n\in \NN$, define the partition
\[\P_{n,z} = \{I_{n,z,j}\}_j.\]
By construction, $\P_{n+1,z}$ is a refinement of $\P_{n,z}$, since $\D_{n,z}' \subset \D_{n+1,z}'$. The hyperbolic injective property is satisfied, as $T^n_MI_{n,z,j} \subset W_z$ for some $z\in Z$ by construction of $I_{n,z,j}$. By applying Lemma \ref{l.key} on $\I$ provides the following corollary.

\begin{corollary}\label{c.keycat}
    Consider a potential $v$ that is uniformly anti $\alpha$-H\"older, continuous, and monotone all along $W_z$ for all $z\in Z$, and satisfies $0\leq v(\omega)\leq 1$ for all $\omega \in \TT^d$. Then there exists a $\lambda_0 = \lambda_0(v)>0$ such that $\theta_n$ is monotone on every $I_{n,z,j}\in \P_{n,z}$ for any $z\in Z$. Moreover, for any $z\in Z$, $I_{n,z,j}\in \P_{n,z}$, and $\omega,\omega'\in I_{n,z,j}$, then one has the inequality
    \[|\theta_n(\omega', t)-\theta_n(\omega, t)|\geq \frac{H}{20}\beta^{n\alpha}|\omega-\omega'|^\alpha,\]
    for any $n\in \NN$ and $t\in \I$.
\end{corollary}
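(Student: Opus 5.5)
The plan is to reduce Corollary \ref{c.keycat} to Lemma \ref{l.key} together with the geometry of $T_M$ along the unstable direction. First I check the hypotheses of Lemma \ref{l.key} on each leaf $W_z$. Give $W_z$ the order induced by the parameter $t$ in $f_z(t)=tu+z$. The partitions $\{\P_{n,z}\}$ are nested, since $\D'_{n,z}\subset\D'_{n+1,z}$. By construction, no interval $I_{n,z,j}$ contains a point where $T_M^k W_z$ crosses the boundary of $F$ for any $k\le n$. Hence $T_M^n|_{I_{n,z,j}}$ is the restriction of the linear map $\omega\mapsto M\omega$ followed by a single translation by an integer vector. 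It is therefore injective and maps $I_{n,z,j}$ onto a segment contained in a single leaf $W_{z'}$.

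Because $Mu=\beta u$, this map is affine along the segment. It is order preserving if $\beta>0$ and order reversing if $\beta<0$. In either case $v\circ T_M^n$ is monotone on $I_{n,z,j}$, because $v$ is monotone along $W_{z'}$. The induction in the proof of Lemma \ref{l.key} uses only two facts: monotonicity of $\theta\circ T^{n+1}$ on cells of $\P_{n+1}$, and the bound $\|v\|_\infty\le 1$. So the lemma applies verbatim with $K=\I$. It gives $\lambda_0=\eta^{-1}(5\|v\|_\infty+2M)$ with $M=2$ and $G\le 3$, and therefore $\eta=\frac{1}{2(1+G^2)}\ge\frac{1}{20}$. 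This yields monotonicity of $\theta_n$ on each $I_{n,z,j}$ together with
\[
|\theta_n(\omega',t)-\theta_n(\omega,t)|\ge \tfrac{1}{20}|v(T^n\omega')-v(T^n\omega)|.
\]

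Second, I would convert the right-hand side using the uniform anti $\alpha$-H\"older property along leaves. Since $T_M^n\omega$ and $T_M^n\omega'$ lie on the same leaf $W_{z'}$, we have
\[
|v(T^n\omega')-v(T^n\omega)|\ge H\,|T^n\omega'-T^n\omega|^\alpha .
\]
On the cell, $T^n\omega'-T^n\omega=M^n(\omega'-\omega)$, and $\omega'-\omega$ is parallel to $u$; the integer translation cancels. Hence
\[
|T^n\omega'-T^n\omega|=|\beta|^n|\omega'-\omega|,
\]
which gives the stated bound $\frac{H}{20}\beta^{n\alpha}|\omega-\omega'|^\alpha$, reading $\beta^{n\alpha}$ as $|\beta|^{n\alpha}$.

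The main obstacle is to verify carefully that $T_M^n$ restricted to a cell really lands inside one leaf $W_{z'}$, with a single translation and no wrap-around. The worry is that a crossing of $\partial F$ by some intermediate iterate $T_M^k W_z$ might not be recorded in $\D'_{n,z}$. I would handle this by induction on $n$. If $T_M^{n}I$ lies in one leaf for a cell $I$ of $\P_{n,z}$, then applying $M$ gives a straight segment in direction $u$. Cutting this segment at its crossings of $\partial F$ — exactly the points added at stage $n+1$ — produces pieces that are each translates of subsegments of single leaves. This is because leaves are precisely the maximal segments in direction $u$ inside $F$, by Lemma \ref{l.hyperplane}.
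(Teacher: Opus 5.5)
For $\beta>0$ your argument is the same as the paper's. You check that $\{\P_{n,z}\}$ is nested, and that $T_M^n$ acts on each cell as $M^n$ plus one integer translation landing in a single leaf. You then apply Lemma~\ref{l.key} with $K=\I$ (so $G\le 3$ and $\eta\ge\frac1{20}$), and finish with the anti H\"older bound on the target leaf and $|T^n\omega'-T^n\omega|=\beta^n|\omega'-\omega|$. Your induction that each cell lands in one leaf is more careful than the paper's one-line remark.

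\textbf{The gap.} You claim the proof of Lemma~\ref{l.key} only needs $\theta\circ T^{n+1}$ to be monotone on cells, so an order-reversing $T_M^n$ (the case $\beta<0$) is harmless. That is false: the induction needs $\theta\circ T^{n+1}$ to be monotone in the \emph{same} sense as $\theta_n$. For $\omega<\omega'$ in a cell it uses
$\theta(T^{n+1}\omega')-\theta(T^{n+1}\omega)\ge 2\eta\,(v(T^{n+1}\omega')-v(T^{n+1}\omega))\ge 0$.
This is used to pass from $\phi_{n+1}(\omega)<\phi_{n+1}(\omega')$ to $\theta_{n+1}(\omega)<\theta_{n+1}(\omega')$, and again in (\ref{e.2.ii.a}). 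In the subshift setting, Lemma~\ref{l.resord} is exactly what supplies this. The reason is structural: $\phi_{n+1}$ is an increasing staircase that is nearly flat away from $\theta_n\in\frac{\pi}{2}+\pi\ZZ$, and on the flat parts only $\theta\circ T^{n+1}$ can carry the monotonicity.

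\textbf{Counterexample for $\beta<0$.} The corollary is actually false there. Take $d=1$, $M=(-2)$, $v(x)=x$, $t=\frac14$, and the cell $(0,\frac12)$, on which $Tx=1-2x$.
\begin{itemize}
\item $\theta(Tx)=\cot^{-1}(2x-\frac34)$ decreases from about $2.21$ to about $1.33$.
\item $\phi_1(x)=\cot^{-1}\big(\lambda^2 g(Tx)(\frac14-x)\big)$ is $O(\lambda^{-2})$ on $(0,0.2]$ and at least $\pi-O(\lambda^{-2})$ on $[0.3,\frac12)$.
\item Hence $\theta_1(0^+)\approx 2.21$, $\theta_1(0.2)\approx 1.91$ and $\theta_1(0.3)\approx 4.86$.
\end{itemize}
So the continuous function $\theta_1$ takes some value twice on the cell, which violates both the monotonicity and the lower bound.

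\textbf{What to change.} Instead of reading $\beta^{n\alpha}$ as $|\beta|^{n\alpha}$, you must assume $\beta>0$, as the paper's notation tacitly does. For the same reason, you also need $v$ to be monotone in one common sense on every leaf (say increasing in the direction $u$). The hypothesis ``monotone along each $W_z$'' alone would allow $T_M^{n+1}$ to carry a cell of $W_z$ onto a leaf where $v$ runs the other way.
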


With this estimate at hand, we can run steps (iii) - (v). Defining $\C_n$ as in (\ref{e.definecn}) with cylinder sets replaced with $I_{n,z,j}$ and using the estimate of $Card(B_n)$, one attains the estimate
\begin{equation}\label{e.CARD2x}
    Card(\C_n) < \sum_{j=0}^{n+2}\beta^{k+j+1}.
\end{equation}

On the other hand, (iv) follows similarly to the subshift of finite type setting. Define for any $\delta>0$,
\begin{align*}
    D_{n,z}(\delta) &= \{\omega\in W_z: ||\theta_n(\omega) - \frac{\pi}{2}||_{\RR\PP^1} < \delta\},\\
    B_{n,z}(\delta)&= \bigsqcup_{\omega\in \C_n} \{\Tilde{\omega}\in W_z: |\theta_n(\omega) - \theta_n(\Tilde{\omega})| < \delta\}.
\end{align*}
By a similar argument in Lemma \ref{l.thetacylinder}, we obtain an identical estimate on $D_{n,z}(\delta)$.

\begin{corollary}\label{c.measureDNdoubling}
    For any $\delta \in(0,\frac{\pi}{2})$,  
    \[m_z(D_{n,z}(\delta))<C\delta^{\frac{1}{\alpha}}\]
    for all $n\in \NN$ and $z\in Z$.
\end{corollary}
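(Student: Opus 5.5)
We follow the subshift argument (Lemma \ref{l.thetacylinder}, Lemma \ref{l.Dntocylinder}, Corollary \ref{c.measureDN}), with cylinder sets replaced by the intervals $I_{n,z,j}$ and the decay rate $p$ replaced by Lebesgue length on $W_z$. We split $D_{n,z}(\delta)$ into $B_{n,z}(\delta)$ and $D_{n,z}(\delta)\setminus B_{n,z}(\delta)$. We may assume $\delta$ is small, say $\delta<\pi/8$; for larger $\delta$ the bound holds trivially after enlarging $C$, since $m_z$ is a probability measure.

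\emph{Step 1: each $\delta$-window is a short interval.} Fix $I=I_{n,z,j}$ and $\omega_0\in I$. By Corollary \ref{c.keycat}, any $\omega\in I$ with $|\theta_n(\omega)-\theta_n(\omega_0)|<2\delta$ satisfies
\[
|\omega-\omega_0|<\beta^{-n}\Big(\tfrac{40\delta}{H}\Big)^{1/\alpha}.
\]
Since $\theta_n$ is monotone on $I$, the set $\{\omega\in I:\|\theta_n(\omega)-\tfrac{\pi}{2}\|_{\RR\PP^1}<\delta\}$ is a union of at most $Card(\C_n\cap I)+2$ subintervals. Here $+2$ accounts for the endpoint windows near $w_{n,j}$ and $w_{n,j+1}$, exactly as in Lemma \ref{l.Dntocylinder}. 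Each subinterval has length at most $2\beta^{-n}(40\delta/H)^{1/\alpha}$. As in Lemma \ref{l.subsets}, $\delta<\pi/8$ guarantees that each such window stays within one $\pi\ZZ$-branch.

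\emph{Step 2: counting.} Summing over $j$ gives
\[
|D_{n,z}(\delta)|\le \big(Card(\C_n)+2\,Card(\P_{n,z})\big)\cdot 2\beta^{-n}(40\delta/H)^{1/\alpha}.
\]
By (\ref{e.CARD2x}) and the bound $Card(\D'_{n,z})<\sum_{j=0}^{n+1}\beta^{k+j+1}$, both counts are $\le C\beta^{n}$, with $C$ depending only on $\beta,k$ (hence on $d,M$). So $|D_{n,z}(\delta)|\le C\delta^{1/\alpha}$.

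\emph{Step 3: normalization.} We have $m_z=|\cdot|/|W_z|$. The obstacle is that $|W_z|$ can be arbitrarily small for $z$ near the boundary of $Z$, so dividing by it does not give a uniform bound. The count should therefore be done relative to $|W_z|$. Intervals of $\P_{n,z}$ are created by crossings of $T^nW_z$ with the boundary of the fundamental domain. The crossing count for a segment of length $\ell$ is $\lesssim \beta^n\ell+ n$-dependent boundary terms (\cite[Lemma 2]{chiem}). This gives counts of order $\beta^n|W_z|+C$ per step.

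To control the additive constant, I would use the inductive structure of Lemma \ref{l.upperboundofbadpoints}: each new interval contributes at most one new point of $\C$. Accumulating over steps with the contraction $\beta^{-(n-j)}$ gives a geometric sum, analogous to Lemma \ref{l.boundsum} with $p$ replaced by $\beta^{-1}$. This should yield $\sum_{\omega\in\C_n}|I(\omega)|\le C|W_z|$, and then Step 1 gives $m_z(D_{n,z}(\delta))<C\delta^{1/\alpha}$. If the uniformity in $z$ fails for very short leaves, I would fall back on proving the bound for $\mu$-almost every $z$ with $|W_z|$ bounded below. This suffices after integrating against the disintegration, since short leaves carry small total measure.
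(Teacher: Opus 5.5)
\textbf{Steps 1 and 2 match the paper.} They are exactly the paper's proof. Corollary \ref{c.keycat} confines each $\delta$-window to an interval of length $\lesssim\beta^{-n}\delta^{1/\alpha}$; this covers windows around points of $\C_n$ and windows at endpoints of some $I_{n,z,j}$, as in Lemma \ref{l.Dntocylinder}. Then (\ref{e.CARD2x}) and the bound on $Card(\D'_{n,z})$ supply $O(\beta^n)$ such windows. What this actually proves is $|D_{n,z}(\delta)|\le C\delta^{1/\alpha}$ for \emph{arc length} $|\cdot|$ on $W_z$, uniformly in $z$, $n$ and $t\in\I$. The paper's step ``$m_z(\{\tilde\omega:|\theta_n(\omega)-\theta_n(\tilde\omega)|<\delta\})<C\delta^{1/\alpha}\beta^{-n}$'' is this length bound. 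For the normalized $m_z$ it holds only with an extra factor $|W_z|^{-1}$, which the paper silently drops.

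\textbf{The gap is Step 3.} You are right that normalization is a problem, but Step 3 cannot close it: with normalized $m_z$ and $C$ independent of $z$, the statement is false. Here is a counterexample.
\begin{itemize}
\item Take $n=0$ and $d=2$, with $u$ having nonzero entries. Then $|W_z|\to 0$ as $z$ approaches an endpoint of $Z$.
\item Take $v$ Lipschitz along leaves, e.g.\ Example \ref{ex.exp} rescaled into $[0,1]$.
\item Choose $z$ so close to an endpoint that $\mathrm{osc}_{W_z}v<\delta$, and set $t=v(\omega_z)\in\I$ for some $\omega_z\in W_z$.
\end{itemize}
Since $\cot^{-1}$ is $1$-Lipschitz with $\cot^{-1}(0)=\frac{\pi}{2}$, we get $|\theta_0(\omega)-\frac{\pi}{2}|\le|v(\omega)-v(\omega_z)|<\delta$ on all of $W_z$. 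Hence $m_z(D_{0,z}(\delta))=1$, which exceeds $C\delta^{1/\alpha}$ once $\delta<C^{-\alpha}$.

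\textbf{Your proposed mechanism fails for the same reason.} Even granting $\sum_{\omega\in\C_n}|I(\omega)|\le C|W_z|$, Step 1 bounds each window in absolute length. It does not bound it as a fraction $C\delta^{1/\alpha}$ of its interval. In the subshift case that relative bound came from the window sitting in a subcylinder of measure at most $p^N$ times its parent. Here $T^nI_{n,z,j}$ may itself be a short leaf, on which the window is the whole interval.

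\textbf{Your fallback proves a different statement.} The set $\{z:|W_z|\ge\epsilon_0\}$ is not of full measure for any $\epsilon_0>0$. Simply discarding short leaves downstream costs a factor $1-\mu(\text{short leaves})$ in front of $\log\lambda$, so it does not recover $\log\lambda-C_0$.

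\textbf{The correct repair is not to normalize.} Keep your arc-length bound and integrate it over $Z$. The map $(z,s)\mapsto z+su$ has constant Jacobian, so for Lebesgue measure $\mu$ on $\TT^d$,
\[
\mu\{\omega\in\TT^d:\|\theta_n(\omega)-\tfrac{\pi}{2}\|_{\RR\PP^1}<\delta\}\le C\delta^{1/\alpha}.
\]
The Young-type argument in the proof of Theorem \ref{t.halfPLEpd} then runs directly with $\mu$ on $\TT^d$. No per-leaf normalized estimate is needed.
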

\begin{proof}
    Consider any $z\in Z$. By Corollary \ref{c.keycat}, for each $\omega\in \C_n$
    \[\{\Tilde{\omega}: |\theta_n(\omega) - \theta_n(\Tilde{\omega})|<\delta\} \subset \{\Tilde{\omega}: |\omega- \Tilde{\omega}| < C\frac{\delta^{\frac{1}{\alpha}}}{\beta^n}\},\]
    which implies
    \begin{equation}\label{e.omegacn}
        m_z(\{\Tilde{\omega}: |\theta_n(\omega) - \theta_n(\Tilde{\omega})|<\delta\}) < C\frac{\delta^{\frac{1}{\alpha}}}{\beta^n}.
    \end{equation}
    Note that (\ref{e.omegacn}) holds for every $\omega\in \C_n$ and combined with (\ref{e.CARD2x}),
    \begin{align}
        m_z(B_{n,z}(\delta)) &\leq \sum_{\omega\in \C_n} m_z(\{\Tilde{\omega}: |\theta_n(\omega) - \theta_n(\Tilde{\omega})|<\delta\}) \nonumber\\
        &< \sum_{\omega\in \C_n} \frac{C\delta^{\frac{1}{\alpha}}}{\beta^n} \nonumber\\
        &= \frac{C\delta^{\frac{1}{\alpha}}}{\beta^n} Card(\C_n)\nonumber\\
        &\leq \frac{C\delta^{\frac{1}{\alpha}}}{\beta^n}\sum_{j=0}^{n+2}\beta^{k+j+1} \nonumber\\
        &\leq C\delta^{\frac{1}{\alpha}} (\beta + \beta^2 + \beta^3 + \sum_{j=0}^{n-1} \beta^{-j}) \nonumber\\
        &\leq C\delta^{\frac{1}{\alpha}} \label{e.cmeasuredn.1}.
    \end{align}
    Similar to Lemma \ref{l.Dntocylinder}, for any $n\in \NN$ and $0\leq j \leq Card(\D_{n,z}')-2$ the elements in $\Big(D_{n,z}(\delta) \setminus B_{n,z}(\delta)\Big)\cap I_{n,z,j}$ arise from the endpoints of $W_z$. It follows that
    \begin{equation*}\label{e.cmeasuredn.2}
        m_z\Bigg(\Big(D_n(\delta)\setminus B_n(\delta)\Big) \cap I_{n,j}\Bigg) < \frac{C\delta^{\frac{1}{\alpha}}}{\beta^n}.
    \end{equation*}
    Putting the estimates (\ref{e.cmeasuredn.1}) and (\ref{e.cmeasuredn.2}) together, we obtain
    \begin{align*}
        m_z(D_{n,z}(\delta)) &\leq m_z(B_{n,z}(\delta)) + \sum_{j=0}^{Card(\D_{n,z}') -2}m_z\Bigg(\Big(D_n(\delta)\setminus B_n(\delta)\Big) \cap I_{n,j}\Bigg) \\
        &< C\delta^{\frac{1}{\alpha}} + \beta^{n+k}\frac{C\delta^{\frac{1}{\alpha}}}{\beta^n} \\
        &<  C\delta^{\frac{1}{\alpha}}.
    \end{align*}
\end{proof}

To conclude positivity along $W_z$, one runs an identical argument provided in the half-line or see \cite{young}. To conclude positivity on $\TT^d$ one uses Rohklin's disintegration, Lebesgue's Dominated convergence theorem, and the lower estimate of the Lyapunov exponent along $W_z$. See \cite[Proposition 1]{chiem} for details.

For $d=1$ one can define a sequence of partitions in a similar manner to $W_z$, then apply Lemma \ref{l.key}. Afterwards, one runs the same argument provided after Corollary \ref{c.keycat} to conclude positivity, which concludes Theorem \ref{t.PLEhd}.

\end{document}